\newtheorem{theorem}{Theorem}[section]
\newtheorem{lemma}[theorem]{Lemma}
\newtheorem{corollary}[theorem]{Corollary}
\newtheorem{proposition}[theorem]{Proposition}
\newtheorem{conjecture}[theorem]{Conjecture}
\theoremstyle{definition}
\newtheorem{definition}[theorem]{Definition}
\theoremstyle{remark}
\theoremstyle{notation}
\theoremstyle{claim}
\numberwithin{equation}{section}
\begin{document}
\title{Templates for Binary Matroids}

\author{Kevin Grace}
\address{Department of Mathematics\\
Louisiana State University\\
Baton Rouge, Louisiana}
\email{kgrace3@lsu.edu}

\author{Stefan H. M. van Zwam}
\address{Department of Mathematics\\
Louisiana State University\\
Baton Rouge, Louisiana}
\email{svanzwam@math.lsu.edu}
\thanks{The first author was supported by a Huel D. Perkins Fellowship from the Louisiana State University Graduate School. The second author was supported by the National Science Foundation, grant 1500343.}

\subjclass{05B35}
\date{}

\begin{abstract}
A \textit{binary frame template} is a device for creating binary matroids from graphic or cographic matroids. Such matroids are said to \textit{conform} or \textit{coconform} to the template. We introduce a preorder on these templates and determine the nontrivial templates that are minimal with respect to this order. As an application of our main result, we determine the eventual growth rates of certain minor-closed classes of binary matroids, including the class of binary matroids with no minor isomorphic to $PG(3,2)$. Our main result applies to all highly-connected matroids in a class, not just those of maximum size. As a second application, we characterize the highly-connected 1-flowing matroids.
\end{abstract}

\maketitle
\section{Introduction}
\label{introduction}
Geelen, Gerards, and Whittle ~\cite{ggw15} recently announced a structure theorem describing the highly connected members of any proper minor-closed class of matroids representable over a given finite field. In this paper we study some consequences of their result. To state a first, rough version of their result, we need the following definitions.

A matroid $M$ is \textit{vertically $k$-connected} if, for each partition $(X,Y)$ of the ground set of $M$ with $r(X)+r(Y)-r(M)<k-1$, either $X$ or $Y$ is spanning. We denote the unique prime subfield of $\mathbb{F}$ by $\mathbb{F}_{\textnormal{prime}}$. We say that a matroid $M_2$ is a \textit{rank-$(\leq t)$ perturbation} of a matroid $M_1$ if there exist matrices $A_1$ and $A_2$ over $\mathbb{F}$ such that $r(M(A_1-A_2))\leq t$ and such that $M_1\cong M(A_1)$ and $M_2\cong M(A_2)$. 

We now restate ~\cite[Theorem 3.3]{ggw15}. Its proof is forthcoming in a future paper by Geelen, Gerards, and Whittle.
\begin{theorem}\label{ggw3.3}
Let $\mathbb{F}$ be a finite field and let $m_0$ be a positive integer. Then there exist $k,n,t\in\mathbb{Z}_+$ such that, if $M$ is a matroid representable over $\mathbb{F}$ such that $M$ or $M^*$ is vertically $k$-connected and such that $M$ has an $M(K_n)$-minor but no $PG(m_0-1,\mathbb{F}_{\textnormal{prime}})$-minor, then $M$ is a rank-$(\leq t)$ perturbation of a frame matroid representable over $\mathbb{F}$.
\end{theorem}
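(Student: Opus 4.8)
This is the rough structure theorem underlying the matroid minors project, so a complete proof is the subject of the whole sequence of work of Geelen, Gerards, and Whittle; here I only sketch the plan. The idea is to deduce it from the general qualitative structure theorem for vertically highly connected $\mathbb{F}$-representable matroids in a proper minor-closed class, and then to use the $M(K_n)$-minor to rule out every structural outcome except the frame one. After choosing $k$ and $t$ large enough in terms of $\mathbb{F}$ and $m_0$, that general theorem asserts that any $\mathbb{F}$-representable $M$ with $M$ or $M^*$ vertically $k$-connected and with no $PG(m_0-1,\mathbb{F}_{\textnormal{prime}})$-minor satisfies at least one of: (i) $M$ has branch-width at most some bound $b$; (ii) $M$ is a rank-$(\leq t)$ perturbation of a frame matroid over $\mathbb{F}$; (iii) $M$ is a rank-$(\leq t)$ perturbation of the dual of a frame matroid over $\mathbb{F}$; (iv) $M$ is a rank-$(\leq t)$ perturbation of a matroid representable over a proper subfield of $\mathbb{F}$.

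The plan is then to eliminate (i), (iii), and (iv). For (i): branch-width is minor-monotone, since a branch-decomposition of $M$ restricts to one of any minor of no larger width, so $\mathrm{bw}(M)\geq\mathrm{bw}(M(K_n))$; since $\mathrm{bw}(M(K_n))\to\infty$, taking $n$ large (relative to $b$) gives a contradiction. For (iii): if a rank-$(\leq t)$ perturbation of $F^*$ has an $M(K_n)$-minor, then $F^*$ itself has an $M(K_{n'})$-minor with $n'$ still large when $n$ is large relative to $t$, hence $F$ has an $M^*(K_{n'})$-minor; but frame matroids are closed under minors while $M^*(K_{n'})$ is not a frame matroid once $n'$ is large, so (iii) is impossible for $n$ large. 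For (iv) I would induct on $|\mathbb{F}|$: if $M$ is a rank-$(\leq t)$ perturbation of a matroid $M'$ over a proper subfield $\mathbb{F}'$, then $M'$ is still vertically highly connected (the connectivity drops by at most $O(t)$ under a rank-$(\leq t)$ perturbation), still has a large clique minor, and still has no $PG(m_0'-1,\mathbb{F}_{\textnormal{prime}})$-minor for a suitable $m_0'$; applying the theorem over the smaller field $\mathbb{F}'$ makes $M'$ a bounded-rank perturbation of an $\mathbb{F}'$-frame matroid, which is in particular an $\mathbb{F}$-frame matroid, and composing the two bounded-rank perturbations leaves $M$ a bounded-rank perturbation of an $\mathbb{F}$-frame matroid. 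The base case $\mathbb{F}=\mathbb{F}_{\textnormal{prime}}$ has no subfield alternative, so only (ii) can survive there. In every case $M$ is a rank-$(\leq t')$ perturbation of an $\mathbb{F}$-frame matroid for some $t'$ depending only on $\mathbb{F}$ and $m_0$, which is the conclusion.

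The real content, and the main obstacle, is the general qualitative structure theorem invoked in the first step; there is no shortcut around it for this statement. Its proof runs through the machinery developed across the matroid minors papers: the matroid grid theorem together with the theory of tangles and branch-decompositions, to pass from large branch-width to a highly structured tangle; a Growth-Rate-Theorem dichotomy, which splits off the exponential-density regime (precisely where arbitrarily large projective geometries occur, and so excluded here); and, the genuinely hard part, an analysis of vertically highly connected matroids of polynomial density that produces a frame (or dual-frame, or subfield) ``skeleton'' together with a correction of bounded rank. Controlling the perturbation rank $t$ uniformly in $M$ is where the theory of stabilizers, unique representability, and matrix perturbations does the work. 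In the present paper we take this theorem as given, as its full proof is forthcoming in work of Geelen, Gerards, and Whittle.
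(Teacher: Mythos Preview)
The paper does not prove this theorem at all: it is stated as a restatement of \cite[Theorem~3.3]{ggw15}, with the proof explicitly deferred to forthcoming work of Geelen, Gerards, and Whittle. So there is no ``paper's own proof'' to compare your proposal against; the theorem is used here as a black box, and the present paper's contributions begin with the template machinery built on top of it.

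As for your sketch itself, the overall shape --- invoke a qualitative structure theorem for highly connected $\mathbb{F}$-representable matroids in a proper minor-closed class, then use the $M(K_n)$-minor and the absence of a $PG(m_0-1,\mathbb{F}_{\textnormal{prime}})$-minor to eliminate the bounded-branch-width and dual-frame outcomes --- is broadly in the spirit of how \cite{ggw15} frames things, and you are right that the substantive content lies entirely in that structure theorem rather than in the reduction steps. A couple of cautions on the details: your elimination of the dual-frame case (iii) is too quick, since frame matroids over $\mathbb{F}$ are \emph{not} closed under minors in general (they are closed under deletion but not contraction), so the argument ``$F$ has an $M^*(K_{n'})$-minor but $M^*(K_{n'})$ is not frame, contradiction'' does not work as stated; one instead uses density or growth-rate considerations to rule out the cographic/dual-frame side once a large clique minor is present. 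Likewise, the subfield case (iv) does not typically appear as a separate alternative in the statement you are invoking (over $\mathbb{F}_{\textnormal{prime}}$ there is no proper subfield, and in the general statement the subfield issue is absorbed into the definition of frame template via the group $\Gamma$), so the induction you outline is probably not how the actual argument proceeds. But since the full proof is not in this paper, these are comments on your sketch rather than discrepancies with anything written here.
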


Let us consider a very simple example of a rank-1 perturbation. Let $A_1$ be the binary matrix
\[\begin{bmatrix}
1&0&0&0&1&1&1&0&0&0\\
0&1&0&0&1&0&0&1&1&0\\
0&0&1&0&0&1&0&1&0&1\\
0&0&0&1&0&0&1&0&1&1\\
\end{bmatrix},
\]
and let $A_2$ be the binary matrix
\[\begin{bmatrix}
0&1&1&1&1&1&1&0&0&0\\
1&0&1&1&1&0&0&1&1&0\\
0&0&1&0&0&1&0&1&0&1\\
0&0&0&1&0&0&1&0&1&1\\
\end{bmatrix}.
\]
Note that $A_2$ is the result of adding the rank-1 matrix
\[\begin{bmatrix}
1&1&1&1&0&0&0&0&0&0\\
1&1&1&1&0&0&0&0&0&0\\
0&0&0&0&0&0&0&0&0&0\\
0&0&0&0&0&0&0&0&0&0\\
\end{bmatrix}
\] to $A_1$. Therefore, the vector matroid $M(A_2)$ is a rank-1 perturbation of $M(A_1)$.

Theorem ~\ref{ggw3.3} is essentially a simplified version of a much more complex structure theorem ~\cite[Theorem 4.2]{ggw15}. Geelen, Gerards, and Whittle introduced the concept of a \textit{template} as a tool to capture much of this complexity.

Our focus in this paper is on the binary case. Roughly speaking, a binary frame template can be thought of as a recipe for constructing a representable matroid from a graphic or cographic matroid. A matroid constructed in this way is said to \textit{conform} or \textit{coconform} to the template. 

In the example above, we may think of $M(A_2)$ as the matroid obtained from the vector matroid of the following matrix by contracting the element indexing the final column. Note that the large submatrix on the bottom left is $A_1$:
\[
\left[
\begin{array}{@{}cccccccccc|c@{}}
1&1&1&1&0&0&0&0&0&0&1\\
\hline
1&0&0&0&1&1&1&0&0&0&1\\
0&1&0&0&1&0&0&1&1&0&1\\
0&0&1&0&0&1&0&1&0&1&0\\
0&0&0&1&0&0&1&0&1&1&0\\
\end{array}
\right]
\]
In fact, for any matrix $A$ of the following form, where $v$ and  $w$ are arbitrary binary vectors, the matroid $M(A)/c$ conforms to the template $\Phi_{CX}$, which we will define in Section ~\ref{Reducing a Template}:
\begin{center}
\begin{tabular}{|c|c|}
\multicolumn{1}{c}{}&\multicolumn{1}{c}{$c$}\\
\hline
$v$&1\\
\hline
&\\
incidence matrix of a graph&$w$\\
&\\
\hline
\end{tabular}
\end{center}

Let $\mathcal{M}(\Phi)$ denote the set of matroids representable over a field $\mathbb{F}$ that conform to a frame template $\Phi$.  Theorem ~\ref{ggwframe} below is a slight modification of ~\cite[Theorem 4.2]{ggw15}. The modification is explained in Section ~\ref{Preliminaries}.
\begin{theorem}\label{ggwframe}
Let $\mathbb{F}$ be a finite field, let $m$ be a positive integer, and let $\mathcal{M}$ be a minor-closed class of matroids representable over $\mathbb{F}$. Then there exist $k,l\in \mathbb{Z}_+$ and frame templates $\Phi_1,\dots, \Phi_s, \Psi_1,\dots, \Psi_t$ such that
\begin{itemize}
\item $\mathcal{M}$ contains each of the classes $\mathcal{M}(\Phi_1),\dots,\mathcal{M}(\Phi_s)$,
\item $\mathcal{M}$ contains the duals of the matroids in each of the classes $\mathcal{M}(\Psi_1),\dots,\mathcal{M}(\Psi_t)$, and
\item if $M$ is a simple vertically $k$-connected member of $\mathcal{M}$ with at least $l$ elements and with no $PG(m-1,\mathbb{F}_{\textnormal{prime}})$ minor, then either $M$ is a member of at least one of the classes $\mathcal{M}(\Phi_1),\dots,\mathcal{M}(\Phi_s)$, or $M^*$ is a member of at least one of the classes $\mathcal{M}(\Psi_1),\dots,\mathcal{M}(\Psi_t)$.
\end{itemize}
\end{theorem}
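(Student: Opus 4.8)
Theorem \ref{ggwframe} is not proved from first principles here; the plan is to derive it from the structure theorem of Geelen, Gerards, and Whittle, \cite[Theorem 4.2]{ggw15}, via the adjustments recorded in Section \ref{Preliminaries}. Accordingly, the first step is to write out the precise form of \cite[Theorem 4.2]{ggw15} and to pin down exactly where our formulation differs from theirs: the precise class of templates under consideration, whether their conclusion already separates into conforming templates $\Phi_1,\dots,\Phi_s$ and coconforming templates $\Psi_1,\dots,\Psi_t$, the connectivity hypothesis (here vertical $k$-connectivity of $M$ or of $M^{*}$), and the ancillary hypotheses that $M$ be simple and have at least $l$ elements. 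The theorem follows once each such difference is shown to be harmless, so the work is one of reconciliation rather than of new mathematics.

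Most of these differences are routine to dispose of. If the coconforming alternative is not already present in \cite[Theorem 4.2]{ggw15} in the form demanded by the third bullet, one applies the structure theorem a second time to the class $\mathcal{M}^{*}:=\{M^{*}:M\in\mathcal{M}\}$, which is again a minor-closed class of $\mathbb{F}$-representable matroids because the dual of a minor of $M$ is a minor of $M^{*}$; the templates it returns become $\Psi_1,\dots,\Psi_t$, and any $M$ with $M^{*}\in\mathcal{M}(\Psi_j)\subseteq\mathcal{M}^{*}$ lies in $\mathcal{M}$, giving the second bullet. Taking $k$ and $l$ to be the maxima of the constants returned by the two applications, and using that a vertically $k$-connected matroid is vertically $k'$-connected for all $k'\le k$, the dichotomy in the third bullet follows from whichever application the connectivity of $M$ allows. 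Adding the hypotheses that $M$ be simple and have at least $l$ elements is standard, since simplification alters vertical connectivity only in low rank and enlarging $l$ discards only finitely many matroids of each rank.

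The one genuinely delicate point, which I expect to be the main obstacle, is reconciling the notion of template: \cite[Theorem 4.2]{ggw15} is phrased in the general template formalism of \cite{ggw15}, while Theorem \ref{ggwframe} must be read with the streamlined notion of frame template introduced here and set up in Section \ref{Preliminaries}. I would check that every template arising from their theorem can be replaced --- at the cost of possibly enlarging $k$ and $l$ and of adjoining finitely many additional templates --- by templates of the present kind defining exactly the same classes $\mathcal{M}(\Phi)$, so that no conforming matroid is lost or gained. This amounts to matching two formally different definitions clause by clause; with that correspondence in hand, the theorem is immediate.
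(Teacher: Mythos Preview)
The paper does \emph{not} prove Theorem~\ref{ggwframe}. It is stated as a result whose proof ``is forthcoming in a future paper by Geelen, Gerards, and Whittle'' and is presented as a slight modification of \cite[Theorem~4.2]{ggw15}. The paper simply records what the modifications are: a footnote explains that the template formalism differs only in that the set the authors of \cite{ggw15} call $X$ has been absorbed into $Y_0$, and Section~\ref{Preliminaries} explains that the extra hypothesis ``at least $l$ elements'' was added on the basis of personal communication with Geelen, who indicated it is needed so that adjoining finitely many matroids to $\mathcal{M}$ does not change the list of templates. That is the entire treatment; no argument is given.

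Your proposal therefore attempts more than the paper does, but it also misidentifies where the work lies. The coconforming templates $\Psi_1,\dots,\Psi_t$ are already part of the statement of \cite[Theorem~4.2]{ggw15}; there is no need to apply the structure theorem a second time to $\mathcal{M}^{*}$. The template reconciliation you flag as ``the one genuinely delicate point'' is in fact trivial here: the only change is the absorption of one labelled set into another, which manifestly does not change $\mathcal{M}(\Phi)$. Conversely, the step you dismiss as ``standard'' --- adding the size-$l$ hypothesis --- is precisely the point the paper singles out as requiring outside input; your sketch (``enlarging $l$ discards only finitely many matroids of each rank'') does not address why the \emph{templates} are unaffected, which is the actual content of Geelen's remark. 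In short, your reconciliation plan is reasonable in spirit but inverts the difficulty of the pieces, and in any case the paper itself defers the proof entirely to \cite{ggw15} and its sequel.
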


Our contribution is to shed some light on how these templates are related to each other. We define a preorder on the set of frame templates. Our main result, Theorem ~\ref{minimal}, is a list of nontrivial binary frame templates that are minimal with respect to this preorder.

One application of this result involves growth rates of minor-closed classes of binary matroids. The \textit{growth rate function} of a minor-closed class $\mathcal{M}$ is the function whose value at an integer $r\geq0$ is given by the maximum number of elements in a simple matroid in $\mathcal{M}$ of rank at most $r$. We prove that a minor-closed class of binary matroids has a growth rate that is eventually equal to the growth rate of the class of graphic matroids if and only if it contains all graphic matroids but does not contain the class of matroids conforming to a certain template. The class of matroids conforming to this template is exactly the class of matroids having an even-cycle representation with a blocking pair. Geelen and Nelson also proved this result in ~\cite{gn15}. We also prove the following theorem. Here, $\mathcal{EX}(F)$ denotes the class of binary matroids with no $F$-minor. If $f$ and $g$ are functions, we write $f(r)\approx g(r)$ if $f(r)=g(r)$ for all but finitely many $r$.

\begin{theorem}
\label{EXPG32}
 The growth rate function for $\mathcal{EX}(PG(3,2))$ is \[h_{\mathcal{EX}(PG(3,2))}\approx r^2-r+1.\]
\end{theorem}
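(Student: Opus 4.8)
The plan is to establish matching bounds on the number of elements in a simple rank-$r$ member of $\mathcal{EX}(PG(3,2))$. For the lower bound I would produce, for every $r\ge2$, a simple binary matroid $N_r$ of rank $r$ with exactly $r^2-r+1$ elements and no $PG(3,2)$-minor. The natural choice is an even-cycle matroid: let $G_r$ be obtained from $K_r$ by doubling every edge and adding one loop, let $\Sigma_r$ consist of one edge from each doubled pair together with the loop, and let $N_r$ be the binary matroid represented by the incidence matrix of $G_r$ together with the characteristic row vector $\chi_{\Sigma_r}$. Since $\Sigma_r$ is not in the cut space of $G_r$, the matroid $N_r$ has rank $|V(G_r)|=r$, and its elements---the $2\binom r2$ signed-or-unsigned edges together with the unique signed loop $(0;1)$---are pairwise distinct and nonzero, so $N_r$ is simple with $2\binom r2+1=r^2-r+1$ elements. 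To see that $N_r$ has no $PG(3,2)$-minor, note that the class of even-cycle matroids is minor-closed (deletion is immediate; a signed non-loop edge becomes even after resigning at an endpoint; and contracting a signed loop leaves a graphic matroid), so it suffices to check that $PG(3,2)$ is not an even-cycle matroid. But a rank-$4$ even-cycle matroid either takes a cut for its signature---making it graphic, hence regular, whereas $PG(3,2)$ is not---or is assembled from at most $\binom42$ vertex-pairs used with two signatures, plus one signed loop, hence has at most $13<15$ distinct elements.

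For the upper bound I would invoke Theorem~\ref{ggwframe}, applied with $\mathbb F=\mathrm{GF}(2)$ and $m=4$; its hypothesis of having no $PG(m-1,\mathbb F_{\mathrm{prime}})$-minor is automatic, since every member of $\mathcal{EX}(PG(3,2))$ omits $PG(3,2)$. This produces integers $k,l$ and frame templates $\Phi_1,\dots,\Phi_s,\Psi_1,\dots,\Psi_t$ with $\mathcal M(\Phi_i)\subseteq\mathcal{EX}(PG(3,2))$ for each $i$, such that every simple vertically $k$-connected $M\in\mathcal{EX}(PG(3,2))$ with at least $l$ elements either conforms to some $\Phi_i$ or has $M^*$ conforming to some $\Psi_j$. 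A matroid whose dual conforms to a frame template is a bounded-rank perturbation of a cographic matroid, so a simple such matroid of rank $r$ has only $O(r)$ elements and is irrelevant at the quadratic scale; the task is thus to bound the growth rate of each $\mathcal M(\Phi_i)$.

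Here Theorem~\ref{minimal} is the main tool. A trivial $\Phi_i$ has $\mathcal M(\Phi_i)$ contained in the graphic matroids, with growth rate $\binom{r+1}2\le r^2-r+1$. A nontrivial $\Phi_i$ lies above one of the finitely many minimal nontrivial binary frame templates of Theorem~\ref{minimal}, and since $\mathcal M(\Phi_i)\subseteq\mathcal{EX}(PG(3,2))$ and that class is minor-closed, the conforming matroids of any such minimal template lie in $\mathcal{EX}(PG(3,2))$ as well. Going through the finite list, I would discard the minimal templates whose conforming class contains a $PG(3,2)$-minor and, using that $\mathcal M(\Phi_i)$ omits $PG(3,2)$ to bound the parameters of $\Phi_i$ (the number of ``lift'' rows, the subgroup, the size of the special column set $Y_0$), show that each surviving $\Phi_i$ is, up to the preorder, among a short explicit list of templates whose growth rates can be computed directly: the incidence-matrix part yields a graphic-type quadratic, possibly doubled by a lift row, and the remaining data contributes only lower-order corrections. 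The outcome is that every such $\Phi_i$ has growth rate at most $r^2-r+1$---attained, as the matroids $N_r$ show, by the template whose conforming class is (a dense subclass of) the even-cycle matroids. Hence a simple vertically $k$-connected member of $\mathcal{EX}(PG(3,2))$ of large rank $r$ has at most $r^2-r+1$ elements.

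To finish I would remove the connectivity hypothesis by a standard induction on rank: a maximum-size simple rank-$r$ member of $\mathcal{EX}(PG(3,2))$ that is not vertically $k$-connected decomposes along a low-order vertical separation into two members of the class of strictly smaller rank, and combining the inductive bounds for the two pieces across their bounded overlap gives at most $r^2-r+1$ elements once $r$ is large, while the $k$-connected case is treated above. Combined with the lower bound this gives $h_{\mathcal{EX}(PG(3,2))}(r)=r^2-r+1$ for all sufficiently large $r$. I expect the main obstacle to be the step resting on Theorem~\ref{minimal}: identifying exactly which templates can occur for $\mathcal{EX}(PG(3,2))$---equivalently, bounding the template parameters forced by the absence of a $PG(3,2)$-minor---and then computing the resulting growth rates \emph{exactly}, including the linear and constant terms and not merely the leading $r^2$. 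The connectivity reduction is routine by comparison.
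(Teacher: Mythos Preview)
Your lower bound and your plan for the template analysis are exactly what the paper does: even-cycle matroids realize $r^2-r+1$, and the heart of the upper bound is the long case analysis (the paper's Lemma~\ref{PG32Phi}) showing that every template $\Phi_i$ with $\mathcal M(\Phi_i)\subseteq\mathcal{EX}(PG(3,2))$ is either $\Phi_X$ or has $C=\emptyset$ and $\Lambda,\Delta$ trivial. You correctly flag this as the hard step.

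The gap is in your connectivity reduction. The induction you sketch does not close. If $M$ has a vertical separation $(A,B)$ of order $<k$ with $r(A)=a$, $r(B)=b$, then $a+b\le r+k-2$ and $|M|\le h(a)+h(b)$; using the inductive bound on both pieces and the convexity of $q(r)=r^2-r+1$ one does get $q(a)+q(b)\le q(r)$ for $r$ large. The problem is the base case: one does \emph{not} have $h(r)\le q(r)$ for all small $r$. For instance $PG(3,2)$ minus a point lies in $\mathcal{EX}(PG(3,2))$, so $h(4)=14>13=q(4)$. That excess feeds back into the recursion $h(r)\le h(a)+h(b)$ whenever a piece has small rank, and the worst-case analysis of this recursion lets the defect $h(r)-q(r)$ grow rather than die out; nothing in the decomposition alone forces an extremal matroid of large rank to avoid a small, overfull piece. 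This is precisely why the paper invokes Theorem~\ref{gn51} (Geelen--Nelson): its contrapositive says that if every large, simple, vertically $k$-connected member has at most $q(r)$ elements, then $h_{\mathcal M}(r)\le q(r)$ for all but finitely many $r$. That connectivity-boosting step is not routine and is doing real work; you should replace your induction by an appeal to Theorem~\ref{gn51}, after which the argument goes through exactly as you outline via Lemma~\ref{conformonly}.
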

Note that $r^2-r+1$ is the growth rate of the class of even-cycle matroids.

Our main result goes beyond growth rates because it gives information about all highly-connected matroids in a minor-closed class, not just the maximum-sized matroids. This is illustrated by our second application, involving 1-flowing matroids. The 1-flowing property is a generalization of the max-flow min-cut property of graphs. We prove the following.
\begin{theorem}
\label{1flowing}
 There exist $k,l\in\mathbb{Z}_+$ such that every simple, vertically $k$-connected, 1-flowing matroid with at least $l$ elements is either graphic or cographic.
\end{theorem}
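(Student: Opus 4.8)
The plan is to run the class of $1$-flowing matroids through Theorem~\ref{ggwframe} and then use the classification of minimal templates in Theorem~\ref{minimal} to argue that only trivial templates can occur. Write $\mathcal{M}$ for the class of $1$-flowing matroids. I will use the following facts, all due to Seymour: $\mathcal{M}$ is minor-closed; every graphic matroid and every cographic matroid lies in $\mathcal{M}$; and neither $U_{2,4}$ nor $F_7^*$ is $1$-flowing. Since $U_{2,4}\notin\mathcal{M}$, every member of $\mathcal{M}$ is binary; and since $F_7^*$ is a minor of $PG(3,2)$ and $F_7^*\notin\mathcal{M}$, no member of $\mathcal{M}$ has a $PG(3,2)$-minor. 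Hence Theorem~\ref{ggwframe} applies with $\mathbb{F}=GF(2)$ and $m=4$: there are $k,l\in\mathbb{Z}_+$ and binary frame templates $\Phi_1,\dots,\Phi_s,\Psi_1,\dots,\Psi_t$ such that $\mathcal{M}(\Phi_i)\subseteq\mathcal{M}$ for every $i$, such that $\{N^*:N\in\mathcal{M}(\Psi_j)\}\subseteq\mathcal{M}$ for every $j$, and such that every simple, vertically $k$-connected $M\in\mathcal{M}$ with at least $l$ elements satisfies $M\in\mathcal{M}(\Phi_i)$ for some $i$ or $M^*\in\mathcal{M}(\Psi_j)$ for some $j$. (The hypothesis that $M$ has no $PG(3,2)$-minor is automatic here, since $M\in\mathcal{M}$.)

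The crux is to show that none of $\Phi_1,\dots,\Phi_s,\Psi_1,\dots,\Psi_t$ lies above a nontrivial minimal template in the preorder. Suppose some $\Phi_i$ lies above a nontrivial minimal template $\Phi$ from the list in Theorem~\ref{minimal}. Since $\Phi$ precedes $\Phi_i$ in the preorder (so that $\mathcal{M}(\Phi)$ is contained in $\mathcal{M}(\Phi_i)$ up to the high-connectivity bookkeeping built into the preorder), every sufficiently highly-connected matroid conforming to $\Phi$ lies in $\mathcal{M}(\Phi_i)\subseteq\mathcal{M}$ and so is $1$-flowing. But I claim that each nontrivial minimal template admits a (highly-connected) conforming matroid with an $F_7^*$-minor; such a matroid is not $1$-flowing, a contradiction. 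Dually, if some $\Psi_j$ lies above a nontrivial minimal template $\Psi$, then $\{N^*:N\in\mathcal{M}(\Psi)\}\subseteq\{N^*:N\in\mathcal{M}(\Psi_j)\}\subseteq\mathcal{M}$, and I claim that each nontrivial minimal template also admits a conforming matroid $N$ with an $F_7$-minor, so that $N^*$ has an $F_7^*$-minor and hence is not $1$-flowing --- again a contradiction. Carrying out these two verifications, template by template down the (short) list of Theorem~\ref{minimal}, is where essentially all of the work lies.

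Granting the claim, each of $\Phi_1,\dots,\Phi_s,\Psi_1,\dots,\Psi_t$ lies above only the trivial template, and it then follows from Theorem~\ref{minimal} and the structure of the preorder that there are $k'\geq k$ and $l'\geq l$ such that every simple, vertically $k'$-connected matroid with at least $l'$ elements conforming to one of these templates is graphic. Now let $M$ be simple, vertically $k'$-connected, and $1$-flowing, with at least $l'$ elements. By Theorem~\ref{ggwframe}, either $M\in\mathcal{M}(\Phi_i)$ for some $i$, whence $M$ is graphic, or $M^*\in\mathcal{M}(\Psi_j)$ for some $j$, whence $M^*$ is graphic (here one must check that $M^*$ inherits enough connectivity from $M$ for the previous sentence to apply, which is part of the standard bookkeeping), so $M$ is cographic. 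Replacing $k$ and $l$ by $k'$ and $l'$ in the statement proves the theorem.

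The main obstacle, as flagged, is the claim in the second paragraph. The only excluded minors for $1$-flowing available to us are $U_{2,4}$, $F_7$, and $F_7^*$, and every template on the list produces binary matroids, so a conforming matroid can fail to be $1$-flowing essentially only by having an $F_7^*$-minor, and dually one needs conforming matroids whose duals have an $F_7^*$-minor. The work is thus to exhibit these minors explicitly inside the (sometimes intricate) families of matroids defined by the nontrivial minimal templates and to verify in each case that the witness really is a matroid conforming to the template, not merely a minor of one. A secondary, more routine difficulty is parameter bookkeeping: Theorem~\ref{ggwframe}, the ``trivial template $\Rightarrow$ graphic'' statements, and the passage to $M^*$ each bring their own connectivity and size thresholds, and one must enlarge $k$ and $l$ to accommodate them all while confirming that the hypotheses of Theorem~\ref{ggwframe} --- in particular the absence of a $PG(3,2)$-minor and, in the coconforming case, the connectivity of $M^*$ --- continue to hold.
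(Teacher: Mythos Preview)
Your overall architecture is right --- feed $\mathcal{M}$ into the structure theorem, then use Theorem~\ref{minimal} (really Corollary~\ref{Y0Y1}) to argue that every template in the describing set must be trivial by exhibiting a forbidden minor among matroids conforming to each nontrivial minimal template. The paper follows exactly this outline.

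The genuine gap is that you have the wrong excluded minors. You assert that $F_7^*$ (and $F_7$) are not $1$-flowing and that ``the only excluded minors for $1$-flowing available to us are $U_{2,4}$, $F_7$, and $F_7^*$.'' Those are the excluded minors for \emph{regular} matroids, not for $1$-flowing matroids. Seymour's known excluded minors for $1$-flowing are $U_{2,4}$, $AG(3,2)$, $T_{11}$, and $T_{11}^*$; in particular $F_7$ and $F_7^*$ are not known to be non-$1$-flowing (and, assuming Seymour's conjecture, they are in fact $1$-flowing, since neither contains any of the four listed matroids as a minor). So your key step --- ``each nontrivial minimal template admits a conforming matroid with an $F_7^*$-minor, contradiction'' --- does not yield a contradiction, because an $F_7^*$-minor does not force a matroid out of $\mathcal{M}$.

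The fix is to run your argument with $AG(3,2)$ in place of $F_7^*$. This is precisely what the paper does: it shows that $AG(3,2)$ is a restriction of $N_{12}=X_4$, hence lies in $\mathcal{M}_v(\Phi_{Y_1})$, and that $AG(3,2)$ is obtained from a graft matroid by contracting the graft element, hence lies (as a minor) in $\mathcal{M}(\Phi_{Y_0})$. By Lemma~\ref{YCD} this already rules out all five nontrivial minimal templates at once, so there is no need to check them ``template by template.'' The coconforming side is then free because $AG(3,2)$ is self-dual --- a cleaner device than your $F_7$/$F_7^*$ duality trick. Once all templates are trivial, matroids conforming to them are graphic and matroids coconforming are cographic, without further connectivity bookkeeping.
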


We use templates to study a minor-closed class $\mathcal{M}$ by describing the highly-connected matroids in the class. This analysis follows a certain pattern:
\begin{enumerate}
 \item Find a matroid $N$ not in $\mathcal{M}$.
\item Find all templates such that $N$ is not a minor of any matroid conforming to that template.
\item If all matroids conforming to these templates are in $\mathcal{M}$, then the analysis is complete.
\item Otherwise, repeat Step (1).
\end{enumerate}

From the definition of conforming to a template, which we will give in Section ~\ref{Preliminaries}, it will not be difficult to see that for each binary frame template $\Phi$, there are integers $t_1$ and $t_2$ such that every matroid conforming to $\Phi$ is a rank-$(\leq t_1)$ perturbation of a graphic matroid and every matroid coconforming to $\Phi$ is a rank-$(\leq t_2)$ perturbation of a cographic matroid. Thus, by Theorem ~\ref{ggwframe}, the highly connected matroids in a minor-closed class of binary matroids are ``close'' to being graphic or cographic. In this regard, the work regarding templates resembles work done by Robertson and Seymour concerning minor-closed classes of graphs. In Theorem 1.3 of ~\cite{rs03}, Robertson and Seymour showed that highly-connected graphs in a minor-closed class are in some sense ``close'' to being embeddable in some surface.

Section ~\ref{Preliminaries} of this paper repeats the necessary definitions found in ~\cite{ggw15}. In Section ~\ref{Reducing a Template}, we prove our main result, as well as giving some machinery leading up to it. Section ~\ref{Growth Rates} applies our result to growth rates of minor-closed classes of binary matroids, and in Section ~\ref{1-flowing Matroids}, we prove Theorem ~\ref{1flowing}.

\section{Preliminaries}
\label{Preliminaries}

We repeat here several definitions concerning highly connected matroids which can be found in  Geelen, Gerards, and Whittle ~\cite{ggw15}. Although the results found in ~\cite{ggw15} are technically about matrices rather than matroids, it suffices for our purposes to state the results in terms of their immediate matroid consequences.

Let $A$ be a matrix over a field $\mathbb{F}$. Then $A$ is a \textit{frame matrix} if each column of $A$ has at most two nonzero entries. We let $\mathbb{F}^{\times}$ denote the multiplicative group of $\mathbb{F}$. Let $\Gamma$ be a subgroup of $\mathbb{F}^{\times}$. A $\Gamma$-frame matrix is a frame matrix $A$ such that:
\begin{itemize}
 \item Each column of $A$ with a nonzero entry contains a 1.
 \item If a column of $A$ has a second nonzero entry, then that entry is $-\gamma$ for some $\gamma\in\Gamma$.
\end{itemize}
In the case where $\Gamma$ is the multiplicative group of one element, a matrix is a $\Gamma$-frame matrix if and only if it is the signed incidence matrix of a graph, with possibly a row removed. In particular, a binary matroid is graphic if and only if it can be represented by a matrix over $\mathrm{GF}(2)$ in which no column has more than two nonzero entries.

To facilitate the description of their structure theorem, Geelen, Gerards, and Whittle capture capture much of the complexity with the concept of a ``template.'' Let $\mathbb{F}$ be a finite field. A \textit{frame template} over $\mathbb{F}$ is a tuple $\Phi=(\Gamma,C,X,Y_0,Y_1,A_1,\Delta,\Lambda)$ such that the following hold\footnote{The authors of ~\cite{ggw15} divided our set $X$ into two separate sets which they called $X$ and $D$. Their set $X$ can be absorbed into $Y_0$, therefore we omit it.}:
\begin{itemize}
 \item [(i)] $\Gamma$ is a subgroup of $\mathbb{F}^{\times}$.
 \item [(ii)] $C$, $X$, $Y_0$ and $Y_1$ are disjoint finite sets.
 \item [(iii)] $A_1\in \mathbb{F}^{X\times (C\cup Y_0\cup Y_1)}$.
 \item [(iv)] $\Lambda$ is a subgroup of the additive group of $\mathbb{F}^X$ and is closed under scaling by elements of $\Gamma$.
 \item [(v)] $\Delta$ is a subgroup of the additive group of $\mathbb{F}^{C\cup Y_0 \cup Y_1}$ and is closed under scaling by elements of $\Gamma$.
\end{itemize}

Let $\Phi=(\Gamma,C,X,Y_0,Y_1,A_1,\Delta,\Lambda)$ be a frame template. Let $B$ and $E$ be finite sets, and let $A'\in\mathbb{F}^{B\times E}$. We say that $A'$ \textit{respects} $\Phi$ if the following hold:
\begin{itemize}
 \item [(i)] $X\subseteq B$ and $C, Y_0, Y_1\subseteq E$.
 \item [(ii)] $A'[X, C\cup Y_0\cup Y_1]=A_1$.
 \item [(iii)] There exists a set $Z\subseteq E-(C\cup Y_0\cup Y_1)$ such that $A'[X,Z]=0$, each column of $A'[B-X,Z]$ is a unit vector, and $A'[B-X, E-(C\cup Y_0\cup Y_1\cup Z)]$ is a $\Gamma$-frame matrix.
 \item [(iv)] Each column of $A'[X,E-(C\cup Y_0\cup Y_1\cup Z)]$ is contained in $\Lambda$.
 \item [(v)] Each row of $A'[B-X, C\cup Y_0\cup Y_1]$ is contained in $\Delta$.
\end{itemize}

Figure \ref{fig:A'} shows the structure of $A'$.

\begin{figure}
\begin{center}
\begin{tabular}{ r|c|c|ccc| }
\multicolumn{2}{c}{}&\multicolumn{1}{c}{$Z$}&\multicolumn{1}{c}{$Y_0$}&\multicolumn{1}{c}{$Y_1$}&\multicolumn{1}{c}{$C$}\\
\cline{2-6}
&&&&&\\
$X$&columns from $\Lambda$&$0$&&$A_1$&\\
&&&&&\\
\cline{2-6}
&\multirow{5}{*}{$\Gamma$-frame matrix}&\multirow{5}{*}{unit columns}&\multicolumn{3}{c|}{\multirow{5}{4em}{rows from  $\Delta$}}\\
&&&&&\\
&&&&&\\
&&&&&\\
&&&&&\\
\cline{2-6}
\end{tabular}
\end{center}
\caption{}
  \label{fig:A'}
\end{figure}

Suppose that $A'$ respects $\Phi$ and that $Z$ satisfies (iii) above. Now suppose that $A\in \mathbb{F}^{B\times E}$ satisfies the following conditions:
\begin{itemize}
\item [(i)] $A[B,E-Z]=A'[B,E-Z]$
\item [(ii)] For each $i\in Z$ there exists $j\in Y_1$ such that the $i$-th column of $A$ is the sum of the $i$-th and the $j$-th columns of $A'$.
\end{itemize}
We say that any such matrix \textit{conforms} to $\Phi$.

Let $M$ be a matroid representable over $\mathbb{F}$. We say that $M$ \textit{conforms} to $\Phi$ if there is a matrix $A$ that conforms to $\Phi$ such that $M$ is isomorphic to $M(A)/C\backslash Y_1$.

Let $\mathcal{M}(\Phi)$ denote the set of matroids representable over $\mathbb{F}$ that conform to $\Phi$. Recall that a matroid $M$ is \textit{vertically $k$-connected} if, for each partition $(X,Y)$ of the ground set of $M$ with $r(X)+r(Y)-r(M)<k-1$, either $X$ or $Y$ is spanning. We denote the unique prime subfield of $\mathbb{F}$ by $\mathbb{F}_{\textnormal{prime}}$. Geelen, Gerards, and Whittle will prove Theorem ~\ref{ggwframe} in a future paper. This theorem is actually a slight modification of the theorem found in ~\cite{ggw15}. In that paper, there is no mention of the requirement that a matroid have size at least $l$. However, Geelen (personal communication) has stated that this is necessary to ensure that adding a finite number of matroids to the class $\mathcal{M}$ does not add any templates to the list $\Phi_1,\dots, \Phi_s, \Psi_1,\dots, \Psi_t$.

Although the term \textit{coconform} does not appear in ~\cite{ggw15}, we define it in the following obvious way.

\begin{definition}
 A matroid $M$ \textit{coconforms} to a template $\Phi$ if its dual $M^*$ conforms to $\Phi$.
\end{definition}

To simplify the proofs in this paper, it will be helpful to expand the concept of conforming slightly.

\begin{definition}
 \label{virtual}
Let $A'$ be a matrix that respects $\Phi$, as defined above, except that we allow columns of $A'[B-X,Z]$ to be either unit columns or zero columns. Let $A$ be a matrix that is constructed from $A'$ as described above. Thus, $A[B,E-Z]=A'[B,E-Z]$, and for each $i\in Z$ there exists $j\in Y_1$ such that the $i$-th column of $A$ is the sum of the $i$-th and the $j$-th columns of $A'$. Let $M$ be isomorphic to $M(A)/C\backslash Y_1$. We say that $A$ and $M$ \textit{virtually conform} to $\Phi$ and that $A'$ \textit{virtually respects} $\Phi$. If $M^*$ virtually conforms to $\Phi$, we say that $M$ \textit{virtually coconforms} to $\Phi$.
\end{definition}

We will denote the set of matroids representable over $\mathbb{F}$ that virtually conform to $\Phi$ by $\mathcal{M}_v(\Phi)$ and the set of matroids representable over $\mathbb{F}$ that virtually coconform to $\Phi$ by $\mathcal{M}^*_v(\Phi)$.

The following notation will be used throughout this paper. We denote an empty matrix by $[\emptyset]$. We denote a group of one element by $\{0\}$ or $\{1\}$, if it is an additive or multiplicative group, respectively. If $S'$ is a subset of a set $S$ and $G$ is a subgroup of the additive group $\mathbb{F}^S$, we denote by $G|S'$ the projection of $G$ into $\mathbb{F}^{S'}$. Similarly, if $\bar{x}\in G$, we denote the projection of $\bar{x}$ into $\mathbb{F}^{S'}$ by $\bar{x}|S'$.

Unexplained notation and terminology will generally follow Oxley ~\cite{o11}. One exception is that we denote the vector matroid of a matrix $A$ by $M(A)$, rather than $M[A]$.

\section{Reducing a Template}
\label{Reducing a Template}

In this section, we will introduce reductions and show that every template reduces to one of several basic templates.

Since templates are used to study minor-closed classes of matroids, a natural question to ask is whether the set of matroids conforming to a particular template is minor-closed. The answer is no, in general. For example, if $|Y_0|=1$, then a matroid conforms to the following binary frame template if and only if it is a graphic matroid with a loop:
\[(\{1\},\emptyset,\emptyset,Y_0,\emptyset,[\emptyset],\{0\},\{0\}).\]
Clearly, this is not a minor-closed class.

Another question to ask is whether there might be some sort of minor relationship between a pair of templates, where every matroid conforming to one template is a minor of a matroid conforming to the other. These questions motivate the following discussion.

\begin{definition}
 A \textit{reduction} is an operation on a frame template $\Phi$ that produces a frame template $\Phi'$ such that $\mathcal{M}(\Phi')\subseteq \mathcal{M}(\Phi)$.
\end{definition}

\begin{proposition} \label{reductions}
The following operations are reductions on a frame template $\Phi$:
\begin{itemize}
\item[(1)] Replace $\Gamma$ with a proper subgroup.
\item[(2)] Replace $\Lambda$ with a proper subgroup closed under multiplication by elements from $\Gamma$.
\item[(3)] Replace $\Delta$ with a proper subgroup closed under multiplication by elements from $\Gamma$.
\item[(4)] Remove an element $y$ from $Y_1$. (More precisely, replace $A_1$ with $A_1[X, Y_0\cup (Y_1-y)\cup C]$ and replace $\Delta$ with $\Delta|(Y_0\cup (Y_1-y)\cup C)$.
\item[(5)] For all matrices $A'$ respecting $\Phi$, perform an elementary row operation on $A'[X, E]$. (Note that this alters the matrix $A_1$ and performs a change of basis on $\Lambda$.)
\item[(6)] If there is some element $x\in X$ such that, for every matrix $A'$ respecting $\Phi$, we have that $A'[\{x\},E]$ is a zero row vector, remove $x$ from $X$. (This simply has the effect of removing a zero row from every matrix conforming to $\Phi$.)
\item[(7)] Let $c\in C$ be such that $A_1[X,\{c\}]$ is a unit column whose nonzero entry is in the row indexed by $x\in X$, and let either $\lambda_x=0$ for each $\lambda\in\Lambda$ or $\delta_c=0$ for each $\delta\in\Delta$. Let $\Delta'$ be the result of adding $-\delta_cA_1[\{x\},Y_0\cup Y_1\cup C]$ to each element $\delta\in\Delta$. Replace $\Delta$ with $\Delta'$, and then remove $c$ from $C$ and $d$ from $D$. (More precisely, replace $A_1$ with $A_1[X-x, Y_0\cup Y_1\cup (C-c)]$, replace $\Lambda$ with $\Lambda|(X-x)$, and replace $\Delta$ with $\Delta'|(Y_0\cup Y_1\cup (C-c))$.)
\item[(8)] Let $c\in C$ be such that $A_1[X,\{c\}]$ is a zero column and $\delta_c=0$ for all $\delta\in\Delta$. Then remove $c$ from $C$. (More precisely, replace $A_1$ with $A_1[X, Y_0\cup Y_1\cup (C-c)]$, and replace $\Delta$ with $\Delta|(Y_0\cup Y_1\cup (C-c))$.)
\end{itemize}
\end{proposition}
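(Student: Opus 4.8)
The plan is to verify each of the eight operations individually, in each case exhibiting, for an arbitrary matroid $M \in \mathcal{M}(\Phi')$, a conforming representation of $M$ with respect to the original template $\Phi$. Concretely: given $M \in \mathcal{M}(\Phi')$, there is a matrix $A$ conforming to $\Phi'$ with $M \cong M(A)/C' \ba Y_1'$; I must produce a matrix $\tilde A$ conforming to $\Phi$ with $M \cong M(\tilde A)/C\ba Y_1$. In every case $\tilde A$ is built from $A$ by a transparent structural manipulation (padding with zero rows or columns, undoing a row operation, splitting off an element of $C$ or $Y_1$), and one checks that $\tilde A$ satisfies conditions (i)--(v) of respecting $\Phi$ together with the two conditions defining conformance, and that contracting $C$ and deleting $Y_1$ recovers $M$.

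For the easy items I would argue as follows. For (1), (2), (3): shrinking $\Gamma$, $\Lambda$, or $\Delta$ only restricts which matrices are allowed, so literally any $A$ conforming to $\Phi'$ already conforms to $\Phi$; here $\mathcal{M}(\Phi') \subseteq \mathcal{M}(\Phi)$ is immediate. For (4), removing $y$ from $Y_1$: given $A$ conforming to $\Phi'$, reintroduce a column indexed by $y$ in the $Y_1$-block. Since $y \notin Y_1'$, in $A$ the columns indexed by $Z$ were formed using only columns of $Y_1' = Y_1 - y$; I can choose the new $y$-column of $A_1$ as prescribed, extend the rows in the $\Delta$-block by the corresponding coordinate (this is possible precisely because $\Delta$ projects onto $\Delta'$), put the $\Gamma$-frame part of that column to zero, and leave $Z$ untouched. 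Deleting $y$ at the end kills this column, so the matroid is unchanged. For (6) and (8): adding back a zero row (resp. a zero column in $C$ whose $\Delta$-coordinate is always $0$) and then, for (8), contracting it, changes neither the matrix up to the relevant minor nor the matroid; one just checks the template conditions still hold. For (5): an elementary row operation on $A'[X,E]$ is invertible, so performing the inverse operation on every matrix conforming to $\Phi'$ lands in the set of matrices conforming to $\Phi$, and row operations on a matrix do not change its vector matroid, hence do not change $M(A)/C\ba Y_1$.

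The substantive case is (7), the elimination of a unit column $c \in C$. Here, given $A$ conforming to $\Phi'$, I reinstate the row $x$ and the column $c$: set the $c$-column to the unit vector $e_x$ on the $X$-rows and $0$ below, and set the $x$-row so that its $C\cup Y_0\cup Y_1$-part is $A_1[\{x\}, Y_0\cup Y_1\cup C]$ as in the original $A_1$. The point of the hypothesis ``$\lambda_x=0$ for all $\lambda\in\Lambda$ or $\delta_c=0$ for all $\delta\in\Delta$'' is exactly to guarantee that one of the two interacting blocks is forced to be zero, so that the reinstated row and column can be filled in consistently: in the first case the $\Lambda$-columns have a $0$ in coordinate $x$ automatically, and the shift $\Delta \mapsto \Delta'$ is precisely the row operation that clears column $c$ from the $\Delta$-block after we pivot on the $(x,c)$ entry; in the second case $\delta_c = 0$ already and $\Delta' = \Delta$. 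Then I contract $c$: pivoting on the $(x,c)$-entry and deleting row $x$ and column $c$ returns exactly the matrix $A$ (with its $\Delta'$-rows translated back to $\Delta$-rows), so $M(\tilde A)/C \ba Y_1 \cong M(A)/C'\ba Y_1 \cong M$. I expect the bookkeeping in (7) — checking that after reinstating $x$ and $c$ the $\Gamma$-frame block, the unit-column block over $Z$, and the membership conditions for $\Lambda$ and $\Delta$ are all still satisfied, and that the contraction genuinely inverts the construction — to be the main obstacle; the remaining items are routine. (The stray reference to ``remove $d$ from $D$'' in (7) is vestigial from the $D$-set of \cite{ggw15} that we absorbed into $Y_0$, and may be ignored.)
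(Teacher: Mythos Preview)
Your approach is correct and mirrors the paper's (much terser) proof, which handles (1)--(3) by noting that every $\Phi'$-respecting matrix already respects $\Phi$, (4) by observing that the $Z$-columns only use surviving elements of $Y_1$, (5)--(6) by invariance of the vector matroid under row operations and deletion of zero rows, and (7)--(8) by the single remark that these operations amount to contracting $c$, which was going to be contracted anyway. One small correction to your write-up of (7): in the first subcase ($\lambda_x=0$ for all $\lambda$) the reinstated $c$-column must carry the values $(\delta_r)_c$, not $0$, on the $(B-X)$-rows --- otherwise those rows lie in $\Delta'$ rather than $\Delta$ and the rebuilt matrix fails to respect $\Phi$; your own observation that the pivot on $(x,c)$ realises the shift $\Delta\to\Delta'$ shows you already have the correct mechanism in mind, so this is just a slip in the description rather than a gap in the argument.
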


\begin{proof}
Let $\Phi'$ be the template that results from performing one of operations (1)-(8) on $\Phi$.

For (1)-(3), every matrix $A'$ respecting $\Phi'$ also respects $\Phi$.

For (4), let $A'$ be a matrix respecting $\Phi'$, and let $M$ be the matroid $M(A)/C\backslash Y_1$, where $A$ is a matrix conforming to $\Phi'$ that has been constructed from $A'$ respecting $\Phi'$ as described in Section ~\ref{Preliminaries}. Since $Y_1$ is deleted to produce $M$, the only effect of $Y_1$ on $M$ is that  for each $i\in Z$ there exists $j\in Y_1$ such that the $i$-th column of $A$ is the sum of the $i$-th and the $j$-th columns of $A'$. But each $j\in Y_1$ in the template $\Phi'$ is also contained in $Y_1$ in the template $\Phi$. Therefore, $A$ conforms to $\Phi$, as does $M$.

For (5) and (6), elementary row operations and removing zero rows produce isomorphic matroids.

Operations (7)  and (8) have the effect of contracting $c$ from $M(A)\backslash Y_1$ for every matrix $A$ conforming to $\Phi$. Since all of $C$ is contracted to produce a matroid $M$ conforming to $\Phi$, the matroids we produce by performing either of these operations still conform to $\Phi$.
\end{proof}

For $i\in\{1,\dots, 8\}$, we call operation $(i)$ above a \textit{reduction of type $i$}.

The operations listed in the definition below are not reductions as defined above, but we continue the numbering from Proposition ~\ref{reductions} for ease of reference.

\begin{definition}
\label{weaklyconforming}
A template $\Phi'$ is a \textit{template minor} of $\Phi$ if $\Phi'$ is obtained from $\Phi$ by repeatedly performing the following operations:
\begin{itemize}
\item[(9)] Performing a reduction of type 1-8 on $\Phi$.
\item[(10)] Removing an element $y$ from $Y_0$, replacing $A_1$ with $A_1[X,(Y_0-y)\cup Y_1\cup C]$, and replacing $\Delta$ with $\Delta|((Y_0-y)\cup Y_1\cup C)$. (This has the effect of deleting $y$ from every matroid conforming to $\Phi$.)
\item[(11)] Let $x\in X$ with $\lambda_x=0$ for every $\lambda\in\Lambda$, and let $y\in Y_0$ be such that $(A_1)_{x,y}\neq0$. Then contract $y$ from every matroid conforming to $\Phi$. (More precisely, perform row operations on $A_1$ so that $A_1[X, \{y\}]$ is a unit column with $(A_1)_{x,y}=1$. Then replace every element $\delta\in\Delta$ with the row vector $-\delta_y A_1[\{x\}, Y_0\cup Y_1\cup C]+\delta$. This induces a group homomorphism $\Delta\rightarrow\Delta'$, where $\Delta'$ is also a subgroup of the additive group of $\mathbb{F}^{C\cup Y_0 \cup Y_1}$ and is closed under scaling by elements of $\Gamma$. Finally, replace $A_1$ with $A_1[X-x,(Y_0-y)\cup Y_1\cup C]$, project $\Lambda$ into $\mathbb{F}^{X-x}$, and project $\Delta'$ into $\mathbb{F}^{(Y_0-y)\cup Y_1\cup C}$. The resulting groups play the roles of $\Lambda$ and $\Delta$, respectively in $\Phi'$.)
\item[(12)] Let $y\in Y_0$ with $\delta_y=0$ for every $\delta\in\Delta$. Then contract $y$ from every matroid conforming to $\Phi$. (More precisely, if $A_1[X, \{y\}]$ is a zero vector, this is the same as simply removing $y$ from $Y_0$. Otherwise, choose some $x\in X$ such that $(A_1)_{x,y}\neq0$. Then for every matrix $A'$ that respects $\Phi$, perform row operations so that $A_1[X,\{y\}]$ is a unit column with $(A_1)_{x,y}=1$. This induces a group isomorphism $\Lambda\rightarrow\Lambda'$ where $\Lambda'$ is also a subgroup of the additive group of $\mathbb{F}^X$ and is closed under scaling by elements of $\Gamma$. Finally, replace $A_1$ with $A_1[X-x,(Y_0-y)\cup Y_1\cup C]$, project $\Lambda'$ into $\mathbb{F}^{X-x}$, and project $\Delta$ into $\mathbb{F}^{(Y_0-y)\cup Y_1\cup C}$. The resulting groups play the roles of $\Lambda$ and $\Delta$, respectively in $\Phi'$.)
\end{itemize}
\end{definition}

Let $\Phi'$ be a template minor of $\Phi$, and let $A'$ be a matrix that virtually respects $\Phi'$. Let $A$ be a matrix that virtually conforms to $\Phi'$, and let $M$ be a matroid that virtually conforms to $\Phi'$. We say that $A'$ \textit{weakly respects} $\Phi$ and that $A$ and $M$ \textit{weakly conform} to $\Phi$. Let $\mathcal{M}_w(\Phi)$ denote the set of matroids representable over $\mathbb{F}$ that weakly conform to $\Phi$, and let $\mathcal{M}^*_w(\Phi)$ denote the set of matroids representable over $\mathbb{F}$ whose duals weakly conform to $\Phi$. If $M\in\mathcal{M}^*_w(\Phi)$, we say that $M$ \textit{weakly coconforms} to $\Phi$.

\begin{lemma} \label{minor}
 If a matroid $M$ weakly conforms to a template $\Phi$, then $M$ is a minor of a matroid that conforms to $\Phi$.
\end{lemma}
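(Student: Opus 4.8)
The plan is to reduce the claim, via induction on the number of template-minor operations used to obtain $\Phi'$ from $\Phi$, to a single-step statement: if $\Phi'$ is obtained from $\Phi$ by one of the operations (9)--(12), then every matroid that (virtually) conforms to $\Phi'$ is a minor of a matroid that (virtually, or even genuinely) conforms to $\Phi$. Since ``is a minor of'' is transitive and $\mathcal{M}(\Phi')\subseteq\mathcal{M}_v(\Phi')\subseteq\mathcal{M}_w(\Phi)$ chases through the definitions, the base case and the composition of steps are routine; the content is in analyzing the four types of single steps. A secondary preliminary point to dispatch first is that virtual conforming is harmless: a matroid $M$ that virtually conforms to a template $\Psi$ is a minor of a matroid that genuinely conforms to $\Psi$. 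Indeed, given $A'$ virtually respecting $\Psi$, the ``bad'' columns of $A'[B-X,Z]$ are the zero columns; for each such column $i$, column $i$ of the conforming matrix $A$ is (zero vector on $X$) $+$ (column $j$ of $A'$ for some $j\in Y_1$), so it is a coordinatized copy of column $j\in Y_1$. If instead we make that column a unit column (adding a new row to $B-X$, extended by zero everywhere else), we get a matrix $\widehat{A}'$ genuinely respecting $\Psi$; the corresponding conforming matrix $\widehat{A}$ has, in column $i$, a unit entry in the new row plus a copy of column $j$. Contracting that new unit element returns exactly column $i$ of $A$. Since all such new elements lie outside $C\cup Y_1$, contracting them commutes with the $/C\ba Y_1$ operation, and we recover $M$ as a minor (in fact a contraction-minor) of a genuine conformer.

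Now the single-step analysis. For operation (9), a reduction of type 1--8, Proposition~\ref{reductions} gives $\mathcal{M}(\Phi')\subseteq\mathcal{M}(\Phi)$ outright, so $M$ conforms to $\Phi$ and there is nothing to prove. For operation (10), deleting $y\in Y_0$: given $A'$ respecting $\Phi'$, re-insert the column indexed by $y$ with the entries dictated by $A_1$ on the $X$-rows and by some representative of $\Delta$ (say the zero vector, whose image in $\Delta'$ is reachable) on the $(B-X)$-rows, to build a matrix respecting $\Phi$; the resulting conforming matroid, upon deleting $y$, returns $M$, so $M$ is a deletion-minor of a conformer of $\Phi$. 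For operations (11) and (12), contracting $y\in Y_0$: these are set up in Definition~\ref{weaklyconforming} precisely so that passing from $\Phi$ to $\Phi'$ mimics contracting the element $y$ from every matroid conforming to $\Phi$. Concretely, given $A'$ respecting $\Phi'$, I would run the row operations and projections of (11)/(12) in reverse: adjoin the row indexed by $x$ and the column indexed by $y$ so that $A_1[X,\{y\}]$ becomes the unit column with a $1$ in row $x$, choosing the new row's entries on $B-X$ and the new column's entries on $B-X$ compatibly with $\Delta$ and with $A'$ being a $\Gamma$-frame / unit-column structure off the template part; undoing the row operations then yields a matrix $\widetilde{A}'$ respecting $\Phi$ in which $y\in Y_0$. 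The corresponding conforming matroid $\widetilde{M}$ satisfies $\widetilde{M}/y\cong M$ (the projections of $\Lambda,\Delta$ in the definition exactly record what happens to the columns-from-$\Lambda$ and rows-from-$\Delta$ after pivoting on $(x,y)$ and contracting), so $M$ is a contraction-minor of a conformer of $\Phi$.

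I expect the main obstacle to be bookkeeping rather than mathematics: in operations (11) and (12) one must check that the ``reverse'' construction genuinely lands in $\mathcal{M}(\Phi)$ — that is, that every column of the reconstructed matrix off the template columns still lies in the restored $\Lambda$ (resp.\ every row still lies in the restored $\Delta$) after the pivot on $(x,y)$ is undone, and that the chosen entries for the re-adjoined row/column can always be realized. This is where the hypotheses ``$\lambda_x=0$ for every $\lambda\in\Lambda$'' in (11) and ``$\delta_y=0$ for every $\delta\in\Delta$'' in (12) are used: they guarantee that the pivot interacts trivially with the part of the matrix it must not disturb, so that the map $\Delta\to\Delta'$ (resp.\ $\Lambda\to\Lambda'$) described in the definition is a well-defined group homomorphism with the reconstructed group as a preimage. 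Once these commutations are verified, concatenating the single-step minors along the sequence witnessing $\Phi'\le\Phi$, together with the virtual-to-genuine reduction of the first paragraph, yields a single matroid conforming to $\Phi$ of which $M$ is a minor.
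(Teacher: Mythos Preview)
Your overall strategy matches the paper's: reduce to a single template-minor step, observe that operations (9)--(12) are by construction deletion/contraction operations on conforming matroids, and separately handle the passage from virtual to genuine conforming. The paper dispatches the steps (9)--(12) in one sentence by pointing to Definition~\ref{weaklyconforming}, where each non-reduction operation is already phrased as ``deleting $y$ from every matroid conforming to $\Phi$'' or ``contracting $y$ from every matroid conforming to $\Phi$''; your more explicit reverse-engineering of (10)--(12) is fine but not needed.

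There is, however, a genuine gap in your virtual-to-genuine reduction. You add a new row of $B-X$ for each zero column of $A'[B-X,Z]$, and then speak of ``contracting that new unit element'' --- but you have added only a row, not an element of the ground set, so there is nothing to contract. Adding the row alone enlarges the matrix without creating any matroid element whose contraction removes that row again. The paper repairs exactly this point, and does so more economically: it adjoins a \emph{single} new row $r$ with a $1$ in every zero $Z$-column, together with a \emph{single} new column $c$ that is a unit column supported only in row $r$. The column $c$ sits legitimately in the $\Gamma$-frame part (zero on $X$, unit on $B-X$), so the augmented matrix genuinely respects $\Phi$; contracting $c$ from the resulting conforming matroid pivots out row $r$ and returns $M$. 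Your multi-row version can be salvaged by also adding one unit column per new row and contracting those columns, but that step must be stated explicitly for the argument to go through.
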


\begin{proof}
Let $\Phi'$ be a template minor of $\Phi$. As we can see from Definition ~\ref{weaklyconforming}, every matroid $M$ weakly conforming to $\Phi'$ is a minor of a matroid virtually conforming to $\Phi$. It remains to analyze the case where $M$ virtually conforms to $\Phi$; so $M$ is isomorphic to $M(K)/C\backslash Y_1$, where $K$ is built from a matrix $K'$ that virtually respects $\Phi$. Consider the following matrix $A'$ obtained from $K'$ by adding a row $r$ and a column $c$.

\begin{center}
\begin{tabular}{ r|c|c|c|c|ccc| }
\multicolumn{1}{c}{}&\multicolumn{1}{c}{$c$}&\multicolumn{1}{c}{}&\multicolumn{2}{c}{$Z$}&\multicolumn{1}{c}{$Y_0$}&\multicolumn{1}{c}{$Y_1$}&\multicolumn{1}{c}{$C$}\\
\cline{2-8}
&&&\multicolumn{2}{|c|}{}&&&\\
$X$&0&columns from $\Lambda$&\multicolumn{2}{|c|}{0}&&$A_1$&\\
&&&\multicolumn{2}{|c|}{}&&&\\
\cline{2-8}
&\multirow{5}{*}{0}&\multirow{5}{*}{$\Gamma$-frame matrix}&\multirow{5}{*}{0}&\multirow{5}{*}{unit columns}&\multicolumn{3}{c|}{\multirow{5}{4em}{rows from $\Delta$}}\\
&&&&&&&\\
&&&&&&&\\
&&&&&&&\\
&&&&&&&\\
\cline{2-8}
$r$&1&0&$1\cdots1$&0&\multicolumn{3}{c|}{0}\\
\cline{2-8}
\end{tabular}
\end{center}

From $A'$, we can obtain a matrix $A$ conforming to $\Phi$ such that $M$ is isomorphic to $M(A)/C\backslash Y_1/c$. So $M$ is a minor of a matroid conforming to $\Phi$.
\end{proof}

An easy consequence of Lemma ~\ref{minor} is that Theorem ~\ref{ggwframe}, which deals with minor-closed classes, can be restated in terms of weak conforming. 

\begin{corollary} \label{weakframe}
 Let $\mathbb{F}$ be a finite field, let $m$ be a positive integer, and let $\mathcal{M}$ be a minor-closed class of matroids representable over $\mathbb{F}$. Then there exist $k,l\in \mathbb{Z}_+$ and frame templates $\Phi_1,\dots, \Phi_s, \Psi_1,\dots, \Psi_t$ such that
\begin{itemize}
\item $\mathcal{M}$ contains each of the classes $\mathcal{M}_w(\Phi_1),\dots,\mathcal{M}_w(\Phi_s)$,
\item $\mathcal{M}$ contains the duals of the matroids in each of the classes $\mathcal{M}_w(\Psi_1)$,$\dots$,$\mathcal{M}_w(\Psi_t)$, and
\item if $M$ is a simple vertically $k$-connected member of $\mathcal{M}$ with at least $l$ elements and with no $PG(m-1,\mathbb{F}_{\textnormal{prime}})$ minor, then either $M$ is a member of at least one of the classes \linebreak $\mathcal{M}_v(\Phi_1),\dots,\mathcal{M}_v(\Phi_s)$ or $M^*$ is a member of at least one of the classes $\mathcal{M}_v(\Psi_1),\dots,\mathcal{M}_v(\Psi_t)$.
\end{itemize}
\end{corollary}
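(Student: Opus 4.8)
The plan is to derive Corollary \ref{weakframe} directly from Theorem \ref{ggwframe}, keeping the same integers $k,l$ and the same templates $\Phi_1,\dots,\Phi_s,\Psi_1,\dots,\Psi_t$, and then verifying the three bullet points one at a time. The only genuine ingredient is Lemma \ref{minor}; everything else is bookkeeping.

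For the first bullet, I would take any $M\in\mathcal{M}_w(\Phi_i)$. By Lemma \ref{minor}, $M$ is a minor of some matroid $N$ conforming to $\Phi_i$, so $N\in\mathcal{M}(\Phi_i)\subseteq\mathcal{M}$ by Theorem \ref{ggwframe}. Since $\mathcal{M}$ is minor-closed, $M\in\mathcal{M}$, giving $\mathcal{M}_w(\Phi_i)\subseteq\mathcal{M}$. For the second bullet, let $M\in\mathcal{M}_w(\Psi_j)$; then by Lemma \ref{minor} again $M$ is a minor of some $N\in\mathcal{M}(\Psi_j)$, hence $M^*$ is a minor of $N^*$. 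By the second bullet of Theorem \ref{ggwframe}, $N^*\in\mathcal{M}$, and minor-closedness yields $M^*\in\mathcal{M}$, which is exactly the statement that $\mathcal{M}$ contains the duals of the matroids in $\mathcal{M}_w(\Psi_j)$.

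For the third bullet, I would first record the elementary observation that $\mathcal{M}(\Phi)\subseteq\mathcal{M}_v(\Phi)$ for every template $\Phi$: a matrix conforming to $\Phi$ is built from a matrix respecting $\Phi$, and a matrix respecting $\Phi$ virtually respects $\Phi$ since a unit column of $A'[B-X,Z]$ is a special case of the ``unit column or zero column'' allowed in Definition \ref{virtual}. Now if $M$ is a simple, vertically $k$-connected member of $\mathcal{M}$ with at least $l$ elements and with no $PG(m-1,\mathbb{F}_{\textnormal{prime}})$-minor, the third bullet of Theorem \ref{ggwframe} gives either $M\in\mathcal{M}(\Phi_i)\subseteq\mathcal{M}_v(\Phi_i)$ for some $i$, or $M^*\in\mathcal{M}(\Psi_j)\subseteq\mathcal{M}_v(\Psi_j)$ for some $j$, which is the required conclusion.

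I do not expect any real obstacle here: the argument is essentially a one-line application of Lemma \ref{minor} together with minor-closedness. The only point requiring a moment's care is the dual case in the second bullet, where one must note that taking a minor and dualizing commute (up to interchanging deletion and contraction), so that a minor of $N$ dualizes to a minor of $N^*$; this is standard.
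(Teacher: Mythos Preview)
Your proposal is correct and follows essentially the same approach as the paper: take the templates and constants from Theorem~\ref{ggwframe}, use Lemma~\ref{minor} together with minor-closedness for the first two bullets, and the trivial containment $\mathcal{M}(\Phi)\subseteq\mathcal{M}_v(\Phi)$ for the third. The extra remarks you include (on duals commuting with minors, and on why conforming implies virtually conforming) are just the details behind the paper's one-line justifications.
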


\begin{proof}
Let  $\Phi_1,\dots, \Phi_s, \Psi_1,\dots, \Psi_t$ be the templates whose existence is implied by Theorem ~\ref{ggwframe}. For $\Phi\in\{\Phi_1,\dots, \Phi_s\}$, Lemma ~\ref{minor} implies that any matroid $M\in \mathcal{M}_w(\Phi)$ is a minor of a matroid $N\in\mathcal{M}(\Phi)$. Since $\mathcal{M}$ contains $\mathcal{M}(\Phi)$ and is minor-closed, $\mathcal{M}$ contains $\mathcal{M}_w(\Phi)$ as well. Similarly, $\mathcal{M}$  contains the duals of the matroids in each of the classes $\mathcal{M}_w(\Psi_1),\dots,\mathcal{M}_w(\Psi_t)$. The third condition above holds since every matroid conforming to a template also virtually conforms to it.
\end{proof}

If $\mathcal{M}_{w}(\Phi)=\mathcal{M}_{w}(\Phi')$, we say that $\Phi$ is \textit{equivalent} to $\Phi'$ and write $\Phi\sim\Phi'$. It is clear that $\sim$ is indeed an equivalence relation.

\begin{definition}
Let $T_{\mathbb{F}}$ be the set of all frame templates over $\mathbb{F}$. We define a preorder $\preceq$ on $T_{\mathbb{F}}$ as follows. We say $\Phi\preceq\Phi'$ if $\mathcal{M}_w(\Phi)\subseteq\mathcal{M}_w(\Phi')$. This is indeed a preorder since reflexivity and transitivity follow from the subset relation. We may obtain a partial order by considering equivalence classes of templates, with equivalence as defined above. However, the templates themselves, rather than equivalence classes, are the objects we work with in this paper.
\end{definition}

Let $\Phi_0$ be the frame template with all groups trivial and all sets empty. We call this template the \textit{trivial template}. In general, we say that a template $\Phi$ is \textit{trivial} if $\Phi\preceq\Phi_0$. It is easy to see that for any template $\Phi$, we have $\Phi_0\preceq \Phi$. Therefore, if $\Phi\preceq\Phi_0$, then actually $\Phi\sim\Phi_0$.

Our desire is to find a collection of minimal nontrivial templates. For the remainder of this paper, we restrict our attention to binary frame templates: those frame templates where $\mathbb{F}=\mathrm{GF}(2)$ and $\Gamma$ is the group of one element.

\begin{definition}
\leavevmode
\begin{itemize}
\item Let $\Phi_C$  be the template with all groups trivial and all sets empty except that $|C|=1$ and $\Delta\cong\mathbb{Z}/2\mathbb{Z}$.
\item Let $\Phi_X$  be the template with all groups trivial and all sets empty except that $|X|=1$ and $\Lambda\cong\mathbb{Z}/2\mathbb{Z}$.
\item Let $\Phi_{Y_0}$  be the template with all groups trivial and all sets empty except that $|Y_0|=1$ and $\Delta\cong\mathbb{Z}/2\mathbb{Z}$. 
\item Let $\Phi_{CX}$ be the template with $Y_0=Y_1=\emptyset$, with $|C|=|X|=1$, with $\Delta\cong\Lambda\cong\mathbb{Z}/2\mathbb{Z}$, with $\Gamma$ trivial, and with $A_1=[1]$.
\item Let $\Phi_{Y_1}$ be the template with all groups trivial, with $C=Y_0=\emptyset$, with $|Y_1|=3$ and $|X|=2$, and with $A_1=
\begin{bmatrix}
1& 0 &1\\
0& 1 & 1
\end{bmatrix}$.
\end{itemize}
\end{definition}

It is not too difficult to see that the Fano matroid $F_7$ virtually conforms to each of $\Phi_C$, $\Phi_X$, $\Phi_{CX}$, $\Phi_{Y_0}$, and $\Phi_{Y_1}$. Therefore, these templates are nontrivial. In fact, one can see that $\mathcal{M}(\Phi_{Y_0})$ is the set of graft matroids, that $\mathcal{M}(\Phi_C)$ is the class of matroids obtained by closing the set of graft matroids under minors, and that $\mathcal{M}(\Phi_X)$ is the class of even-cycle matroids. In Lemma ~\ref{Y1minors}, we will show that $\mathcal{M}_v(\Phi_{Y_1})$ is the class of matroids having an even-cycle representation with a blocking pair.

Our goal in defining reductions and weak conforming was essentially to perform operations on matrices while leaving the $\Gamma$-frame submatrix intact. The following lemma does not contribute to that goal; so we will only make occasional use of it.

\begin{lemma}
\label{YCD}
 The following relations hold:
\begin{itemize}
\item[(1)] $\Phi_{Y_1}\preceq\Phi_X$
\item[(2)] $\Phi_{Y_1}\preceq\Phi_C$
\item[(3)] $\Phi_{Y_0}\preceq\Phi_C$
\item[(4)] $\Phi_C\preceq\Phi_{CX}$
\item[(5)] $\Phi_X\preceq\Phi_{CX}$
\end{itemize}

\end{lemma}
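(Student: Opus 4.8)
The plan is to establish each of the five relations by exhibiting, for an arbitrary matroid weakly conforming to the smaller template, an explicit matrix that weakly respects the larger one and yields the same matroid. Since $\preceq$ is defined via $\mathcal{M}_w(\cdot)$, and by Lemma~\ref{minor} (together with the machinery of template minors) it suffices to show that each matroid virtually conforming to the left-hand template also weakly conforms to the right-hand one; in practice the cleanest route is to show that the left-hand template is a template minor of the right-hand one, or that every conforming matrix for the left template can be padded out to a conforming matrix for the right template.

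For (4) and (5), I would start from $\Phi_{CX}$, which has $|C|=|X|=1$, $A_1=[1]$, and $\Delta\cong\Lambda\cong\mathbb{Z}/2\mathbb{Z}$. To get (4), $\Phi_C\preceq\Phi_{CX}$, I would apply reduction of type 2 to replace $\Lambda$ by the trivial subgroup $\{0\}$; then the element $x\in X$ has $\lambda_x=0$ for all $\lambda\in\Lambda$, and since $A_1[X,\{c\}]=[1]$ is a unit column, reduction of type 7 removes $c$ from $C$ and $x$ from $X$, leaving a template equivalent to $\Phi_C$ (one checks the induced $\Delta'$ is still $\cong\mathbb{Z}/2\mathbb{Z}$ on the single $C$-coordinate). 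Symmetrically, for (5), $\Phi_X\preceq\Phi_{CX}$, I would apply reduction of type 3 to kill $\Delta$, and then use a reduction that removes $c$ from $C$ — here $A_1[X,\{c\}]$ is a unit column and $\delta_c=0$ for all $\delta\in\Delta$, so type 7 again applies, this time leaving $X$ and $\Lambda\cong\mathbb{Z}/2\mathbb{Z}$ intact, giving $\Phi_X$. So (4) and (5) are pure reduction arguments.

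For (1), (2), (3), the target is $\Phi_{Y_1}$ or $\Phi_{Y_0}$ being dominated by $\Phi_X$ or $\Phi_C$, and these are best handled by direct matrix constructions showing $\mathcal{M}_w(\Phi_{Y_0})\subseteq\mathcal{M}_w(\Phi_C)$ and $\mathcal{M}_w(\Phi_{Y_1})\subseteq\mathcal{M}_w(\Phi_X)$, $\mathcal{M}_w(\Phi_{Y_1})\subseteq\mathcal{M}_w(\Phi_C)$. For (3): a matroid in $\mathcal{M}(\Phi_{Y_0})$ is a graft matroid — a graphic matroid with one extra column $y$ (from $Y_0$) whose restriction to $X$ is $0$ and whose restriction to the $\Gamma$-frame part lies in $\Delta\cong\mathbb{Z}/2\mathbb{Z}$, i.e.\ the column is either $0$ or an arbitrary $\mathrm{GF}(2)$-vector supported on the graphic rows; but by construction of $\Phi_C$, taking $M(A)/c$ where $c$ is the $C$-element with $A_1[X,\{c\}]=[\emptyset]$ and the $\Delta$-row condition places an arbitrary vector above $c$, contracting $c$ has exactly the effect of adding that vector as available; one shows the graft element can be realized this way. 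Alternatively, and more simply, one notes $\Phi_{Y_0}$ is literally obtained from $\Phi_C$ by operation (10)/(11)-type moves or that adding an element then contracting a parallel copy realizes a graft; I would look for the one-line template-minor certificate. For (1) and (2), I would use Lemma~\ref{Y1minors} (forward-referenced) identifying $\mathcal{M}_v(\Phi_{Y_1})$ as even-cycle matroids with a blocking pair, together with the identification of $\mathcal{M}(\Phi_X)$ as even-cycle matroids: an even-cycle matroid with a blocking pair is in particular an even-cycle matroid, giving (1); and for (2), since $\Phi_X\preceq\Phi_{CX}$ and one also checks $\Phi_X\preceq\Phi_C$ is false in general — so (2) must be proved directly by building, from an even-cycle-with-blocking-pair representation, a matrix weakly respecting $\Phi_C$, presumably by splitting the blocking-pair structure across a contracted $C$-element.

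The main obstacle I anticipate is relation (2), $\Phi_{Y_1}\preceq\Phi_C$: unlike the others it does not factor through a single reduction chain nor through the obvious inclusion of even-cycle classes, so it requires an honest encoding of the three $Y_1$-columns and the $2\times 3$ matrix $A_1=\left[\begin{smallmatrix}1&0&1\\0&1&1\end{smallmatrix}\right]$ by means of a single contracted $C$-element with an appropriate $\Delta\cong\mathbb{Z}/2\mathbb{Z}$; I would expect to verify that contracting this $C$-element over all choices of the $\Delta$-row reproduces exactly the family of matroids obtained from $\Phi_{Y_1}$ after its own contraction of $C$ (which is empty) and deletion of $Y_1$, and checking that the $Z$-column sums in the $\Phi_{Y_1}$ construction match the effect of the contraction will be the fiddly part. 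The remaining relations should each be a short paragraph once the right reduction or padding matrix is written down.
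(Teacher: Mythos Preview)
Your reduction arguments for (4) and (5) are broken. Reduction of type~7 removes \emph{both} $c$ from $C$ and $x$ from $X$: it replaces $A_1$ by $A_1[X-x, Y_0\cup Y_1\cup(C-c)]$, projects $\Lambda$ to $X-x$, and projects $\Delta'$ to $Y_0\cup Y_1\cup(C-c)$. Starting from $\Phi_{CX}$, where $|C|=|X|=1$ and $Y_0=Y_1=\emptyset$, applying type~7 (after killing either $\Lambda$ or $\Delta$) leaves $C=X=\emptyset$ and both groups trivial---you land at $\Phi_0$, not at $\Phi_C$ or $\Phi_X$. There is no reduction or template-minor operation that removes an element of $X$ while keeping the corresponding element of $C$, or vice versa; so $\Phi_C$ and $\Phi_X$ are simply not template minors of $\Phi_{CX}$, and your plan cannot be repaired along these lines.

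The paper handles all five parts by direct matrix constructions, and in fact explicitly flags this lemma as the one place where the reduction machinery does \emph{not} suffice. For (4), given a matrix $A$ conforming to $\Phi_C$, one either designates an existing row containing the nonzero entry of the $C$-column as the $X$-row, or, if that column is zero, adds a unit row; either way $A$ (or its one-row extension) conforms to $\Phi_{CX}$. For (5), one appends a new row and a new $C$-column to the $\Phi_X$-matrix so that contracting that column recovers the original matroid. For (1)--(3) the paper likewise writes down, for each matroid conforming to the left template, an explicit matrix over the right template: in (1) the key observation is that after deleting the top $X$-row from the maximal $\Phi_{Y_1}$-matrix the remainder is already a $\Gamma$-frame matrix; in (2) one adds a third top row and a $C$-column whose contraction reproduces the $\Phi_{Y_1}$-matrix; in (3) the graft column is realised by adjoining a row and a $C$-column. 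Your sketch for (1) via the even-cycle-with-blocking-pair identification is morally the same as the paper's argument, but note that Lemma~\ref{Y1minors} is proved \emph{using} Lemma~\ref{YCD}, so citing it here would be circular.
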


\begin{proof}
For (1), note that any simple matroid $M$  of rank $r$ virtually conforming to $\Phi_{Y_1}$ is a restriction of the vector matroid of a matrix $A$ of the following form:

\begin{center}
\begin{tabular}{ |c|ccc|c|c|c|}
\hline
\multirow{2}{*}{0}&1&0&1&$1\cdots1$&$0\cdots0$&$1\cdots1$\\
&0&1&1&$0\cdots0$&$1\cdots1$&$1\cdots1$\\
\hline
$\Gamma$-frame matrix&\multicolumn{3}{c|}{0}&$I$&$I$&$I$\\
\hline
\end{tabular}
\end{center}

If we label the sets of rows and columns of $A$ as $B$ and $E$ respectively, and the first row as $x$, then we see that $A[B-x,E]$ is a $\Gamma$-frame matrix. If we let $X=\{x\}$, then we see that $M$ conforms to $\Phi_X$.

For (2), consider the matrix $A$ above. Note that it is obtained by contracting $c$ in the following matrix:

\begin{center}
\begin{tabular}{ |c|ccc|c|c|c|c|}
\multicolumn{7}{c}{}&\multicolumn{1}{c}{$c$}\\
\hline
\multirow{3}{*}{0}&0&0&1&0$\cdots$0&0$\cdots$0&1$\cdots1$&1\\
&1&0&0&$1\cdots1$&$0\cdots0$&$0\cdots0$&1\\
&0&1&0&$0\cdots0$&$1\cdots1$&$0\cdots0$&1\\
\hline
$\Gamma$-frame matrix&\multicolumn{3}{c|}{0}&$I$&$I$&$I$&0\\
\hline
\end{tabular}
\end{center}
Removing $c$ from this matrix, we obtain a $\Gamma$-frame matrix. Therefore, $M$ conforms to $\Phi_C$.

For (3), any matroid $M$ conforming to $\Phi_{Y_0}$ is the vector matroid of a matrix of the following form, where $v$ is an arbitrary column vector:

\begin{center}
\begin{tabular}{ |c|c|}
\hline
&\\
$\Gamma$-frame matrix&$v$\\
&\\
\hline
\end{tabular}
\end{center}

Let $A$ be the matrix below. Label its sets of rows and columns as $B$ and $E$ respectively, and let $c$ be the last column, with $C=\{c\}$.

\begin{center}
\begin{tabular}{|c|c|c|}
\hline
0&1&1\\
\hline
&&\\
$\Gamma$-frame matrix&0&$v$\\
&&\\
\hline
\end{tabular}
\end{center}

Note that $M$ is isomorphic to $M(A)/C$. Since $A[B,E-C]$ is a $\Gamma$-frame matrix, we see that $M$ conforms to $\Phi_C$.

For (4), let $A$ be a matrix conforming to $\Phi_C$ and let $M=M(A)/C$ be the corresponding matroid conforming to $\Phi_C$. If the column of $A$ indexed by $C$ is a zero column, then construct the matrix $\bar{A}$ by adding a unit row, indexed by $X$, whose nonzero entry is in the column indexed by $C$. One readily sees that $\bar{A}$ conforms to $\Phi_{CX}$ and that the corresponding matroid $M(\bar{A})/C$ is equal to $M$. Otherwise, if the column of $A$ indexed by $C$ has a nonzero entry, then one readily sees that $A$ conforms to $\Phi_{CX}$ by considering the row containing the nonzero entry to be indexed by $X$.

For (5), any matroid $M$ conforming to $\Phi_D$ is the vector matroid of a matrix of the following form, where $v$ is an arbitrary row vector:
\begin{center}
\begin{tabular}{|c|}
\hline
$v$\\
\hline
\\
$\Gamma$-frame matrix\\
\\
\hline
\end{tabular}
\end{center}

Consider the following matrix $A$, whose last column is indexed by $\{c\}=C$:
\begin{center}
\begin{tabular}{|c|c|}
\hline
$v$&1\\
\hline
$0$&1\\
\hline
&\\
$\Gamma$-frame matrix&0\\
&\\
\hline
\end{tabular}
\end{center}
The matroid $M$ is isomorphic to $M(A)/c$, which conforms to $\Phi_{CX}$.
\end{proof}

\begin{lemma} \label{yshift}
Let $\Phi$ be a template with $y\in Y_1$. Let $\Phi'$ be the template obtained from $\Phi$ by removing $y$ from $Y_1$ and placing it in $Y_0$. Then $\Phi'\preceq \Phi$.
\end{lemma}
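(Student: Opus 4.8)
Since $\Phi'\preceq\Phi$ means $\mathcal{M}_w(\Phi')\subseteq\mathcal{M}_w(\Phi)$, the plan is to start from a matroid that weakly conforms to $\Phi'$ and reproduce it from a matrix associated with $\Phi$ (or a template minor of it). The basic observation is that $\Phi$ and $\Phi'$ carry literally the same data $(\Gamma,C,X,A_1,\Delta,\Lambda)$ and the same underlying set $C\cup Y_0\cup Y_1$; only the partition of $Y_0\cup Y_1$ into two blocks differs. In particular the ``respects'' conditions for $\Phi$ and for $\Phi'$ are identical, so every matrix that virtually respects $\Phi'$ also virtually respects $\Phi$ once $y$ is regarded as an element of $Y_1$ again; the difference lies only in which columns may be added to columns of $Z$ when passing to a conforming matrix.

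The heart of the proof is a one-column construction. Let $A'$ virtually respect $\Phi'$ and let $M$ be the matroid it produces. Adjoin to $A'$ a new column $z$, placed in the set $Z$, with $A'[X,\{z\}]=0$ and with the column $A'[B-X,\{z\}]$ equal to $0$; this is legitimate by Definition~\ref{virtual}, which allows zero columns in $Z$. The enlarged matrix virtually respects $\Phi$ (with $y\in Y_1$ and $Z$ enlarged by $z$), as one checks by running through conditions (i)--(v). When passing to a conforming matrix for $\Phi$, keep for every old element of $Z$ the $Y_1$-column used before, and choose $y$ itself for $z$; then the column indexed by $z$ becomes an exact copy of the column indexed by $y$. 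In the matroid $M(\cdot)/C\backslash Y_1$ the element $y$ is deleted, since it lies in $Y_1$, while its parallel copy $z$ is retained, so the resulting matroid is isomorphic to $M$. This already shows $\mathcal{M}_v(\Phi')\subseteq\mathcal{M}_v(\Phi)$, hence $\mathcal{M}(\Phi')\subseteq\mathcal{M}_v(\Phi)\subseteq\mathcal{M}_w(\Phi)$.

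To upgrade this to $\mathcal{M}_w(\Phi')\subseteq\mathcal{M}_w(\Phi)$, I would take $M$ weakly conforming to $\Phi'$, fix a template minor $\Phi_1'$ of $\Phi'$ to which $M$ virtually conforms, and trace $y$ through the operations producing $\Phi_1'$ from $\Phi'$. Every operation among $(1)$--$(12)$ that does not act on $y$ --- and every instance of operations $(10)$--$(12)$ applied to a different element of $Y_0$ --- is insensitive to whether $y$ lies in $Y_0$ or $Y_1$, and so has a literal counterpart on the template obtained from $\Phi$ by leaving $y$ in $Y_1$; moreover removing $y$ from $Y_0$ by operation $(10)$ has the same effect on $A_1$ and $\Delta$ as removing it from $Y_1$ by operation $(4)$. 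Consequently, if $y$ survives in $\Phi_1'$ then $\Phi_1'$ is obtained from a template minor $\Phi_1$ of $\Phi$ by the same $Y_1\to Y_0$ relabelling, and the construction of the previous paragraph, applied to the pair $\Phi_1,\Phi_1'$, gives $\mathcal{M}_v(\Phi_1')\subseteq\mathcal{M}_v(\Phi_1)\subseteq\mathcal{M}_w(\Phi)$; and if $y$ is deleted by an operation $(10)$, then from that step on $\Phi_1'$ is an honest template minor of $\Phi$.

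The one case that requires real work --- and the step I expect to be the main obstacle --- is when $y$ (after earlier operations) is \emph{contracted} while in $Y_0$, by operation $(11)$ or operation $(12)$: there is no contraction of $Y_1$-elements, so there is no literal counterpart on the $\Phi$-side. Here I would first apply operation $(5)$ to bring the column of $A_1$ indexed by $y$ to a unit column $e_{x_0}$; contracting $y$ then amounts to deleting the row $x_0$ of $X$, and one verifies that the class obtained after this contraction is exactly $\mathcal{M}_v$ of the template $\Phi^{\flat}$ got from $\Phi$ by moving $y$ out of $Y_1$ and \emph{into} $C$ --- the hypothesis $\lambda_{x_0}\equiv 0$ of operation $(11)$, respectively $\delta_y\equiv 0$ of operation $(12)$, carries over and matches the pivot used in the contraction. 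One can then peel $y$ off $\Phi^{\flat}$ by a reduction of type $7$ (applicable precisely because $A_1[X,\{y\}]$ is now a unit column and one of $\lambda_{x_0}\equiv 0$, $\delta_y\equiv 0$ holds) and combine this with the one-column construction above. The delicate point, which is the real crux, is the bookkeeping needed to see that ``move $y$ into $C$, then reduce'' lands inside $\mathcal{M}_w(\Phi)$ itself rather than inside the weak class of an intermediate template; proving this cleanly may be done by establishing, in parallel, the analogous statement for moving a $Y_1$-element into $C$.
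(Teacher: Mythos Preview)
Your second paragraph is precisely the paper's proof: given a matrix respecting $\Phi'$, adjoin a zero $Z$-column and, when forming the conforming matrix for $\Phi$, add the column indexed by $y$ to it, producing a parallel copy of $y$ that survives after $Y_1$ is deleted. The paper states this in two sentences and stops, concluding that any matroid conforming to $\Phi'$ weakly conforms to $\Phi$.

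Your remaining paragraphs go beyond the paper. You are right that the definition of $\preceq$ asks for $\mathcal{M}_w(\Phi')\subseteq\mathcal{M}_w(\Phi)$, whereas the two-sentence argument (yours and the paper's) literally yields only $\mathcal{M}_v(\Phi')\subseteq\mathcal{M}_v(\Phi)$ (equivalently $\mathcal{M}(\Phi')\subseteq\mathcal{M}_w(\Phi)$). The paper does not explicitly bridge this gap; it records the key construction and moves on. Your attempt to bridge it---tracking how template-minor operations interact with the $y$-shift and isolating the contraction of $y$ via operations (11)/(12) as the one genuinely new case---is a correct diagnosis, but your proposed resolution through ``move $y$ into $C$ and then reduce'' is, as you acknowledge, not fully worked out and would require proving an analogous shift lemma for $C$. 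In short: your core idea matches the paper's exactly; the additional rigor you pursue is a legitimate concern that the paper itself does not address, and your sketch of a fix for it remains incomplete.
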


\begin{proof}
Any matrix respecting $\Phi'$ virtually respects $\Phi$ by adding column $y$ only to the zero $Z$ column. Thus, any matroid conforming to $\Phi'$ weakly conforms to $\Phi$.
\end{proof}

We call the operation described in Lemma ~\ref{yshift} a \textit{$y$-shift}.

\begin{definition}
 Let $\Phi=(\Gamma,C,X,Y_0,Y_1,A_1,\Delta,\Lambda)$ be a frame template over a finite field $\mathbb{F}$. We say that $\Phi$ is in \textit{standard form} if there are disjoint sets $C_0,C_1,X_0,$ and $X_1$ such that $C=C_0\cup C_1$, such that $X=X_0\cup X_1$, such that $A_1[X_0,C_0]$ is an identity matrix, and such that $A_1[X_1,C]$ is a zero matrix.
\end{definition}

 Figure ~\ref{fig:A' standard}, with the stars representing arbitrary matrices, shows a matrix that virtually respects a template in standard form. Note that if $\Phi$ is in standard form, $|C_0|=|X_0|$. Also note that any of $C_0,C_1,X_0,$ or $X_1$ may be empty. Finally, note that we have defined standard form for frame templates over any finite field, not just binary frame templates.

\begin{figure}
\begin{center}
\begin{tabular}{ r|c|c|cccc| }
\multicolumn{2}{c}{}&\multicolumn{1}{c}{$Z$}&\multicolumn{1}{c}{$Y_0$}&\multicolumn{1}{c}{$Y_1$}&\multicolumn{1}{c}{$C_0$}&\multicolumn{1}{c}{$C_1$}\\
\cline{2-7}
$X_0$&columns from $\Lambda|X_0$&0&\multicolumn{2}{c|}{\multirow{2}{*}{$*$}}&\multicolumn{1}{c|}{$I$}&$*$\\
\cline{2-3} \cline{6-7}
$X_1$&columns from $\Lambda|X_1$&0&&&\multicolumn{2}{|c|}{0}\\
\cline{2-7}
&\multirow{5}{*}{$\Gamma$-frame matrix}&\multirow{5}{*}{unit or zero columns}&\multicolumn{4}{c|}{\multirow{5}{4em}{rows from  $\Delta$}}\\
&&&&&&\\
&&&&&&\\
&&&&&&\\
&&&&&&\\
\cline{2-7}
\end{tabular}
\end{center}
\caption{Standard form}
  \label{fig:A' standard}
\end{figure}

\begin{lemma}
\label{standard}
 Every frame template $\Phi=(\Gamma,C,X,Y_0,Y_1,A_1,\Delta,\Lambda)$ is equivalent to a frame template in standard form.
\end{lemma}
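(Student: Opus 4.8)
The plan is to put the block $A_1[X,C]$ into the shape displayed in Figure~\ref{fig:A' standard} using only elementary row operations on the rows indexed by $X$ — that is, reductions of type $5$ from Proposition~\ref{reductions} — and then to observe that such reductions preserve the equivalence class of a template under $\sim$.

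First I would verify that a reduction of type $5$ yields an equivalent template. An elementary row operation on the rows indexed by $X$ leaves the $Z$-columns zero, leaves the $\Gamma$-frame part and the $\Delta$-rows untouched, and carries $\Lambda$ isomorphically onto another subgroup of $\mathbb{F}^X$ closed under scaling by $\Gamma$; so the matrices respecting the resulting template are exactly the images of the matrices respecting $\Phi$ under this operation. Since the inverse of an elementary row operation on the $X$-rows is again such an operation, it follows that if $\Phi'$ is obtained from $\Phi$ by a reduction of type $5$, then $\Phi'$ is a template minor of $\Phi$ \emph{and} $\Phi$ is a template minor of $\Phi'$. Because ``is a template minor of'' is transitive and $\mathcal{M}_w(\Phi)$ is the union of the classes $\mathcal{M}_v(\Phi'')$ over all template minors $\Phi''$ of $\Phi$, this gives $\mathcal{M}_w(\Phi)=\mathcal{M}_w(\Phi')$, i.e.\ $\Phi\sim\Phi'$. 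Hence it suffices to find a finite sequence of reductions of type $5$ transforming $\Phi$ into a template in standard form.

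The required sequence comes from routine linear algebra. Let $\rho=\operatorname{rank}(A_1[X,C])$. Choose $C_0\subseteq C$ to index $\rho$ linearly independent columns of $A_1[X,C]$ and set $C_1=C-C_0$. Since $A_1[X,C_0]$ has rank $\rho$, there is a set $X_0\subseteq X$ of $\rho$ rows with $A_1[X_0,C_0]$ invertible; set $X_1=X-X_0$. The rows of $A_1[X,C]$ indexed by $X_0$ are then linearly independent, hence (as $\operatorname{rank}(A_1[X,C])=\rho$) a basis of the row space, so every row indexed by $X_1$ is a linear combination of them. Therefore there is an invertible matrix $P\in\mathbb{F}^{X\times X}$, expressible as a product of elementary matrices, such that $(PA_1)[X_0,C_0]$ is an identity matrix and $(PA_1)[X_1,C]=0$. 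Performing on $\Phi$, one at a time, the reductions of type $5$ corresponding to the elementary factors of $P$ produces a template $\Phi'$ that is in standard form with respect to $C=C_0\cup C_1$ and $X=X_0\cup X_1$; and $\Phi'\sim\Phi$ by the previous paragraph. The degenerate cases $X=\emptyset$ and $C=\emptyset$ are covered by this argument with $\rho=0$.

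The hard part is really the claim in the second paragraph that a reduction of type $5$ preserves $\sim$ rather than merely containment: Proposition~\ref{reductions} only asserts that it is a reduction, so one must confirm that it is an invertible operation whose inverse is also a reduction of type $5$ — after which the template-minor bookkeeping is immediate. Everything else is standard.
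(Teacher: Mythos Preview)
Your argument is correct and follows essentially the same route as the paper: pick a column basis $C_0$ for $A_1[X,C]$, row-reduce via operation~(5) so that $A_1[X,C_0]$ becomes an identity over zeros, and observe that each application of~(5) yields an equivalent template. The paper simply asserts the last point, whereas you justify it by noting that~(5) is invertible with inverse again of type~(5), so template-minorship runs both ways; this extra care is welcome but not a different method.
\par
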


\begin{proof}
 Choose a basis $C_0$ for $M(A_1[X,C])$, and let $C_1=C-C_0$. Repeatedly perform operation (5) to obtain a template $\Phi'$ where $A_1[X,C_0]$ consists of an identity matrix on top of a zero matrix. Each use of operation (5) results in an equivalent template; therefore, $\Phi\sim\Phi'$. Let $X_0\subseteq X$ index the rows of the identity matrix, and let $X_1\subseteq X$ index the rows of the zero matrix. Since $C_0$ is a basis for $M(A_1[X,C])$, the matrix $A_1[X,C_1]$ must be a zero matrix as well. Thus, $\Phi'$ is in standard form.
\end{proof}

Throughout the rest of this paper, we will implicitly use Lemma ~\ref{standard} to assume that all templates are in standard form. Also, the operations (1)-(12) to which we will refer throughout the rest of this paper are the operations (1)-(8) from Proposition ~\ref{reductions} and (9)-(12) from Definition ~\ref{weaklyconforming}.

\begin{lemma}
 \label{PhiD}
If $\Phi=(\{1\},C,X,Y_0,Y_1,A_1,\Delta,\Lambda)$ is a binary frame template with $\Lambda|X_1$ nontrivial, then $\Phi_X\preceq\Phi$.
\end{lemma}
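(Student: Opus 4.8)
The plan is to exhibit, from the data of $\Phi$, a construction that shows every matroid conforming to $\Phi_X$ weakly conforms to $\Phi$. Recall that $\Phi_X$ has a single element $x$ in $X$, all other sets empty, and $\Lambda\cong\mathbb{Z}/2\mathbb{Z}$; so a matroid conforms to $\Phi_X$ precisely when it is the vector matroid of a matrix of the form $\left[\begin{smallmatrix} v \\ N\end{smallmatrix}\right]$, where $N$ is a binary frame matrix and $v$ is an arbitrary binary row vector — these are exactly the even-cycle matroids. Since $\Lambda|X_1$ is nontrivial, there is an element $x_1\in X_1$ and a vector $\lambda\in\Lambda$ with $\lambda_{x_1}=1$. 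The key point is that $X_1$ indexes rows in which $A_1[X_1,C]$ is zero (standard form), so the row indexed by $x_1$ interacts with $C$ only through $\Delta$, and in fact contracting $C$ and deleting $Y_1$ leaves the $x_1$-row essentially free to carry an arbitrary $\{0,1\}$-pattern on the $\Gamma$-frame columns, coming from scalar multiples of $\lambda$.

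Concretely, I would take an arbitrary even-cycle matroid $M = M\left(\left[\begin{smallmatrix} v \\ N\end{smallmatrix}\right]\right)$ of rank $r$ and build a matrix $A'$ that weakly respects $\Phi$ as follows. Use the rows indexed by $X$ together with some of the $\Gamma$-frame rows to house a copy of $N$ in the $\Gamma$-frame block (adding extra zero $\Gamma$-frame rows as needed so the frame block has enough rows), put $x_1$ as the row that will carry $v$, and realize the columns of $A'[X, E-(C\cup Y_0\cup Y_1\cup Z)]$ as the appropriate scalar multiples of $\lambda$ so that the $x_1$-entries reproduce $v$ while the other $X$-rows are zero on those columns (this is where closure of $\Lambda$ under $\Gamma$-scaling — here just $\{0,1\}$ — is used, and where the nontriviality of $\Lambda|X_1$ is essential). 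The columns indexed by $C$, $Y_0$, $Y_1$ can be taken to make the $B-X$ rows lie in $\Delta$ trivially, e.g. by filling the $X$-part with $A_1$ and the $(B-X)$-part with zeros, which lies in $\Delta$; and $Z$ can be taken empty (or with zero columns, using the "virtual" relaxation). Contracting $C$ and deleting $Y_1$ then kills the columns that were forced by $A_1$, and after deleting the rows in $X_0$ (which by standard form carried the identity on $C_0$ and so become zero rows once $C_0$ is contracted, via reductions of type 6/7) and any remaining unused structure, one is left with exactly $\left[\begin{smallmatrix} v \\ N\end{smallmatrix}\right]$ up to zero rows. Hence $M$ weakly conforms to $\Phi$; equivalently, phrasing the deletions/contractions as passing to a template minor $\Phi'$ of $\Phi$ with $\Phi' \sim \Phi_X$, we get $\Phi_X = \mathcal{M}_w(\Phi_X) \subseteq \mathcal{M}_w(\Phi)$.

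I expect the main obstacle to be bookkeeping rather than a conceptual difficulty: one must check that the $x_1$-row can genuinely be made to carry an \emph{arbitrary} binary vector on the frame columns simultaneously with the other $X$-rows being zero there. The nontriviality of $\Lambda|X_1$ gives one nonzero coordinate pattern $\lambda$; over $\mathrm{GF}(2)$ the available column choices from $\Lambda$ for each frame column are $0$ and the nonzero elements of $\Lambda$, and we only need $0$ and one element with $x_1$-coordinate $1$, so the $x_1$-row is free while we cannot necessarily control the other $X$-rows — but those are precisely the rows indexed by $X_0\cup(X_1-x_1)$, and after contracting $C_0$ (making the $X_0$-rows zero) and using that the $(X_1-x_1)$-rows, whatever garbage they contain from $\lambda$, can be cleared by row operations of type 5 against the $\Gamma$-frame identity blocks or simply absorbed since we are free to choose which frame columns appear. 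A clean way to sidestep this is to use only the zero column of $\Lambda$ on all but the designated frame columns and then argue that adding $\lambda$ to finitely many columns, followed by contraction of $C_0$ and deletion of the superfluous rows, yields the desired form; the details of which reductions (types 5, 6, 7, 10, 11) to invoke, and in what order, are the part that needs care.
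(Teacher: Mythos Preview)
Your endpoint is right --- you want a template minor $\Phi'$ of $\Phi$ with $\Phi'\sim\Phi_X$ --- but the route through an explicit matrix construction is a detour that creates the very bookkeeping problems you flag. The paper simply applies the reduction and template-minor operations in order and never builds a matrix at all: take operations (2) and (3) to shrink $\Lambda$ to $\{0,\lambda\}$ (with $\lambda|X_1\neq 0$) and $\Delta$ to $\{0\}$; then (7) repeatedly to strip off $C_0$ and $X_0$, (8) to remove $C_1$, (4) and (10) to remove $Y_1$ and $Y_0$; this leaves $(\{1\},\emptyset,X_1,\emptyset,\emptyset,[\emptyset],\{0\},\{0,\lambda|X_1\})$. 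Now operation (5) changes basis in $\Lambda$ so that $\lambda|X_1$ is a unit vector, and (6) deletes the resulting zero rows, yielding $\Phi_X$ on the nose.

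Two specific points where your construction would stall as written. First, your claim that ``contracting $C_0$ makes the $X_0$-rows zero'' is not what happens: contracting $C_0$ pivots on the $X_0$-rows and \emph{removes} them, but any nonzero entries $\lambda|X_0$ sitting above the frame columns do not get cleared --- they vanish only because the row is deleted, not because it became zero. More seriously, the $Y_0$-columns survive in $M(A)/C\backslash Y_1$, so your construction does not produce the target even-cycle matroid unless $Y_0=\emptyset$; you must invoke operation (10) on the template side, not on the matrix side. Second, your proposed fix for the garbage in the $(X_1-x_1)$-rows --- ``row operations of type 5 against the $\Gamma$-frame identity blocks'' --- is not available: operation (5) acts only on $A'[X,E]$, never on frame rows. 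The correct fix is the one the paper uses: once $A_1$ is empty, operation (5) is just a change of basis on $\Lambda|X_1$, which makes $\lambda|X_1$ a single unit vector, after which the other $X_1$-rows are genuinely zero and (6) applies. So drop the matrix-building narrative entirely and argue purely with the numbered operations; the proof becomes four lines.
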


\begin{proof}
Perform operations (2) and (3) on $\Phi$ to obtain the following template, where $\lambda$ is an element of $\Lambda$ with $\lambda_x\neq0$ for some $x\in X_1$:
\[(\{1\},C,X,Y_0,Y_1,A_1,\{0\},\{\mathbf{0}, \lambda\}).\] On this template, repeatedly perform operation (7), then (8), then (4), and then (10) until the following template is obtained: \[(\{1\},\emptyset,X_1,\emptyset,\emptyset,[\emptyset],\{0\},\{\mathbf{0}, \lambda\}).\] On this template, repeatedly perform operation (5) to obtain a template that is identical to the previous one except that the support of $\lambda$ contains only one element of $X_1$. On this template, repeatedly perform operation (6) to obtain the following template, where $x\in X_1$: \[(\{1\},\emptyset,\{x\},\emptyset,\emptyset,[\emptyset],\{0\},\mathbb{Z}/2\mathbb{Z}).\] This template is $\Phi_X$.
\end{proof}

\begin{lemma}
\label{PhiC}
 If $\Phi=(\{1\},C,X,Y_0,Y_1,A_1,\Delta,\Lambda)$ is a binary frame template, then either $\Phi_C\preceq\Phi$ or $\Phi$ is equivalent to a template with $C_1=\emptyset$.
\end{lemma}

\begin{proof}
Suppose there is an element $\delta\in\Delta|C$ that is not in the row space of $A_1[X,C]$. Repeatedly perform operations (4) and (10) on $\Phi$ until the following template is obtained:
\[(\{1\},C,X,\emptyset,\emptyset,A_1[X,C],\Delta|C,\Lambda).\]
On this template, perform operations (2) and (3) to obtain the following template:
\[(\{1\},C,X,\emptyset,\emptyset,A_1[X,C],\{\mathbf{0}, \delta\},\{0\}).\]
Every matrix virtually respecting this template is row equivalent to a matrix virtually respecting a template that is identical to the previous template except that there is the additional condition that $\delta|C_0$ is a zero vector. Note that $\delta|C_1$ is nonzero since, in the previous template, $\delta$ was not in the row space of $A_1[X,C]$.
Now, on the current template, repeatedly perform operation (7) and then operation (6) to obtain the following template:
\[\Phi'=(\{1\},C_1,\emptyset,\emptyset,\emptyset,[\emptyset],\{\mathbf{0}, \delta|C_1\},\{0\}).\]

Now, any matroid $M$ conforming to $\Phi'$ is obtained by contracting $C_1$ from $M(A)$, where $A$ is a matrix conforming to $\Phi'$. By contracting any single element $c\in C_1$, where $\delta_c=1$, we turn the rest of the elements of $C_1$ into loops. So $C_1-c$ is deleted to obtain $M$. Thus, $M$ conforms to the template
\[(\{1\},\{c\},\emptyset,\emptyset,\emptyset,[\emptyset],\mathbb{Z}/2\mathbb{Z},\{0\}),\]
which is $\Phi_C$. Similarly, the converse is true that any matroid conforming to $\Phi_C$ conforms to $\Phi'$. Thus, $\Phi_C\sim\Phi'\preceq\Phi$.

Now suppose that every element of $\Delta|C$ is in the row space of $A_1[X,C]$. Thus, contraction of $C_0$ turns the elements of $C_1$ into loops, and contraction of $C_1$ is the same as deletion of $C_1$. By deleting $C_1$ from every matrix virtually conforming to $\Phi$, we see that $\Phi$ is equivalent to a template with $C_1=\emptyset$.
\end{proof}

\begin{lemma}
\label{PhiCD}
 If $\Phi=(\{1\},C,X,Y_0,Y_1,A_1,\Delta,\Lambda)$ is a binary frame template, then one of the following is true:
\begin{itemize}
\item $\Phi_C\preceq\Phi$
\item $\Phi$ is equivalent to a template with $\Lambda|X_1$ nontrivial and $\Phi_X\preceq\Phi$
\item $\Phi$ is equivalent to a template with $\Lambda|X_0$ nontrivial and $\Phi_{CX}\preceq\Phi$
\item $\Phi$ is equivalent to a template with $\Lambda$ trivial and $C=\emptyset$.
\end{itemize}

\end{lemma}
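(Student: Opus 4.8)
The plan is to combine Lemma~\ref{PhiC} (which splits templates into those with $\Phi_C\preceq\Phi$ and those equivalent to a template with $C_1=\emptyset$) with Lemma~\ref{PhiD} (which handles the case $\Lambda|X_1$ nontrivial) and Lemma~\ref{PhiCD}-style reductions via operation (7), and then to do a casework on where the nontriviality of $\Lambda$ lives. First I would apply Lemma~\ref{PhiC}: if $\Phi_C\preceq\Phi$ we are in the first bullet and are done. So assume $\Phi$ is equivalent to a template with $C_1=\emptyset$, i.e. $C=C_0$ and $A_1[X_0,C_0]$ is an identity matrix while $A_1[X_1,C]=0$. Next I would look at $\Lambda$. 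If $\Lambda$ is trivial, I want to reach the fourth bullet: with $\Lambda$ trivial, each column of $A'[X,E-(C\cup Y_0\cup Y_1\cup Z)]$ is zero, so $X_1$ is a set of rows that are zero outside $C\cup Y_0\cup Y_1$; using operations (7) (applicable to each $c\in C_0$ since $\Lambda$ is trivial, so $\lambda_x=0$ for all $\lambda$) we contract all of $C_0$, which deletes nothing and just removes the identity block, leaving $C=\emptyset$, giving the fourth bullet. (One must check that after contracting $C_0$ the resulting $\Lambda$ is still trivial — it is, since operation (7) only projects $\Lambda$.)

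The substantive case is $\Lambda$ nontrivial. Here I split according to whether $\Lambda|X_1$ is nontrivial. If $\Lambda|X_1$ is nontrivial, Lemma~\ref{PhiD} immediately gives $\Phi_X\preceq\Phi$, which is the second bullet. So assume $\Lambda|X_1$ is trivial but $\Lambda$ is nontrivial; then $\Lambda|X_0$ is nontrivial, and I aim for the third bullet, $\Phi_{CX}\preceq\Phi$. The idea is to mimic the proof of Lemma~\ref{PhiD}: pick $\lambda\in\Lambda$ with $\lambda_x\neq 0$ for some $x\in X_0$, perform operations (2) and (3) to kill $\Delta$ and shrink $\Lambda$ to $\{\mathbf 0,\lambda\}$, then use operations (4), (10), and possibly (8) to remove $Y_1$, $Y_0$, and (if present) any superfluous columns, leaving essentially $(\{1\},C_0,X,\emptyset,\emptyset,A_1[X,C_0],\{0\},\{\mathbf 0,\lambda\})$ with $A_1[X_0,C_0]=I$ and $A_1[X_1,C_0]=0$. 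Then, as in Lemma~\ref{PhiD}, repeatedly apply operation (5) to row-reduce so that $\lambda$ is supported on a single element $x\in X_0$, being careful that the row operations that achieve this are among the rows indexed by $X_0$ so that the identity block structure is adjusted consistently; after using operation (6) to delete the now-zero rows in $X_1$ and the unused part of $X_0$, and operation (8) to discard the columns of $C_0$ not hit, what remains is $(\{1\},\{c\},\{x\},\emptyset,\emptyset,[1],\{0\},\mathbb{Z}/2\mathbb{Z})$ — but this is not quite $\Phi_{CX}$, since $\Phi_{CX}$ requires $\Delta\cong\mathbb{Z}/2\mathbb{Z}$. The point is that a matroid conforming to the template just obtained is $M(A)/c$ where $A$ has a single row from $\Lambda$ (so an arbitrary $\{0,1\}$ vector in the $X$-row, with a $1$ in column $c$) on top of a $\Gamma$-frame matrix with a zero entry in column $c$; comparing with the tabular display for $\Phi_{CX}$ in the proof of part (5) of Lemma~\ref{YCD}, this is exactly the family conforming to $\Phi_{CX}$ with the $\Delta$-row trivial, hence a subfamily, and conversely the $\Phi_{CX}$ family with $\Delta$ arbitrary contains it. A cleaner route: keep one element of $\Delta$ alive rather than killing $\Delta$ entirely with operation (3), so that we land directly on $\Phi_{CX}$; I would verify that $\Delta|C_0$ can be taken nontrivial, using operation (7) carefully, exactly as done in the $\Phi_C$-branch of Lemma~\ref{PhiC}.

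The main obstacle I anticipate is bookkeeping in the $\Lambda|X_0$ nontrivial case: the row operations of type (5) used to concentrate $\lambda$ on one row of $X_0$ interact with the identity block $A_1[X_0,C_0]=I$, so one must track how $C_0$ and the standard-form structure transform, and one must ensure that the element of $C_0$ that survives is the one paired (in the identity) with the surviving row $x\in X_0$, so that $A_1$ becomes $[1]$ and not $[0]$. A secondary subtlety is making sure $\Delta$ ends up isomorphic to $\mathbb{Z}/2\mathbb{Z}$ (not trivial) so that we reach $\Phi_{CX}$ on the nose rather than a proper reduction of it; this is handled by choosing, at the operation-(3) step, to retain a nonzero element of $\Delta$ supported appropriately on $C_0$, which is possible by an argument parallel to the one in Lemma~\ref{PhiC}, or alternatively by invoking Lemma~\ref{YCD}(4), $\Phi_C\preceq\Phi_{CX}$, to reconcile the two. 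Everything else is routine application of the reductions already catalogued in Proposition~\ref{reductions} and Definition~\ref{weaklyconforming}.
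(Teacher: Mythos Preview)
Your overall plan (use Lemma~\ref{PhiC} to dispose of $C_1$, then Lemma~\ref{PhiD} for $\Lambda|X_1$, then handle the remaining cases) matches the paper's, and your treatment of the first, second, and fourth bullets is fine. The gap is in your case ``$\Lambda|X_1$ trivial, $\Lambda|X_0$ nontrivial''. There you claim this forces the third bullet, but it does not.

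The template you actually construct after killing $\Delta$, namely $(\{1\},\{c\},\{x\},\emptyset,\emptyset,[1],\{0\},\mathbb{Z}/2\mathbb{Z})$, is trivial: the column $c$ is the unit vector with its $1$ in row $x$, so contracting $c$ simply removes row $x$ together with all the $\Lambda$-information it carried, and every conforming matroid is graphic. Your observation that this family is ``a subfamily'' of $\mathcal{M}(\Phi_{CX})$ is the wrong direction of containment (it gives your template $\preceq\Phi_{CX}$, not $\Phi_{CX}\preceq\Phi$), and invoking $\Phi_C\preceq\Phi_{CX}$ from Lemma~\ref{YCD}(4) is likewise the wrong direction. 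Your proposed fix of ``keeping one element of $\Delta$ alive'' cannot be carried out in general: after Lemma~\ref{PhiC} there is no reason for $\Delta|C_0$ to be nontrivial, and even when it is, an arbitrary nonzero $\delta\in\Delta|C_0$ paired with your $\lambda$ need not give $\Phi_{CX}$.

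The missing idea is that whether you land in the third or fourth bullet depends on the \emph{interaction} of $\Delta|C_0$ with $\Lambda|X_0$, not on $\Lambda|X_0$ alone. The paper's proof takes a pair $\delta\in\Delta|C_0$, $\lambda\in\Lambda|X_0$ and looks at the parity of $\sum_i \delta_i\lambda_i$ over $\mathrm{GF}(2)$: if some pair has odd parity, then contracting $C_0$ genuinely alters an entry of the $\Gamma$-frame block, and reducing to that single pair yields exactly $\Phi_{CX}$; if every pair has even parity, then contracting $C_0$ leaves the frame block untouched, so $\Phi$ is equivalent to a template with $\Lambda|X_0$ trivial (hence $\Lambda$ trivial), and you are in the fourth bullet after all. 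Your case analysis needs this dichotomy.
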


\begin{proof}
By Lemmas ~\ref{PhiD} and ~\ref{PhiC}, we may assume that $\Lambda|X_1$ is trivial and that $C_1=\emptyset$.

First, suppose there exist elements $\delta\in\Delta|C_0$ and $\lambda\in\Lambda|X_0$ such that there are an odd number of natural numbers $i$ with $\delta_i=\lambda_i=1$. Thus, $\Lambda|X_0$ is nontrivial. Repeatedly perform operations (4) and (10) on $\Phi$ until the following template is obtained:
\[(\{1\},C_0,X,\emptyset,\emptyset,A_1[X,C_0],\Delta|C_0,\Lambda).\]
On this template, repeatedly perform operation (6) to obtain the following template:
\[\Phi'=(\{1\},C_0,X_0,\emptyset,\emptyset,A_1[X_0,C_0],\Delta|C_0,\Lambda|X_0).\]
 Perform operations (2) and (3) on $\Phi'$ to obtain the following template:
\[(\{1\},C_0,X_0,\emptyset,\emptyset,A_1[X_0,C_0],\{\mathbf{0}, \delta\},\{\mathbf{0}, \lambda\}).\]
Any matroid conforming to this template is obtained by contracting $C_0$. If $\delta$ is in the row labeled by $r$ and $\lambda$ is in the column labeled by $c$, then when $C_0$ is contracted, 1 is added to the entry of the $\Gamma$-frame matrix in row $r$ and column $c$. Otherwise, the entry remains unchanged when $C$ is contracted. We see then that this template is equivalent to $\Phi_{CX}$, where 1s are used to replace $\delta$ and $\lambda$.

Thus, we may assume that for every element $\delta\in\Delta|C_0$ and $\lambda\in\Lambda|X_0$, there are an even number of natural numbers $i$ such that $\delta_i=\lambda_i=1$. This implies that contraction of $C$ has no effect on the $\Gamma$-frame matrix. So $\Phi$ is equivalent to a template with $\Lambda|X_0$ trivial. Therefore, since $\Lambda|X_1$ is trivial, we see that $\Lambda$ is trivial. Note that operation (7) is a reduction that produces an equivalent template, since $C$ must be contracted to produce a matroid that conforms to a template. By repeatedly performing operation (7), we obtain a template equivalent to $\Phi$ with $C=\emptyset$.
\end{proof}

\begin{lemma}
 \label{PhiY0}
If $\Phi=(\{1\},C,X,Y_0,Y_1,A_1,\Delta,\Lambda)$ is a binary frame template with $\Lambda$ trivial and with $C=\emptyset$, then either $\Phi_{Y_0}\preceq\Phi$ or $\Phi$ is equivalent to a template with $\Delta$ trivial.
\end{lemma}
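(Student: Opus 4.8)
The plan is to argue that if $\Delta$ cannot be made trivial by a sequence of equivalence-preserving operations, then $\Phi_{Y_0}\preceq\Phi$; the mechanism is the same ``strip everything away and look at what remains'' argument used in Lemmas~\ref{PhiD}, \ref{PhiC}, and \ref{PhiCD}. Since $C=\emptyset$ and $\Lambda$ is trivial, $\Delta$ is a subgroup of the additive group of $\mathbb{F}^{Y_0\cup Y_1}$. First I would use a $y$-shift (Lemma~\ref{yshift}) to move all of $Y_1$ into $Y_0$, so that we may assume $Y_1=\emptyset$ and $\Delta\leq\mathbb{F}^{Y_0}$; this only decreases the template in the preorder, and an upper bound on $\Phi$ after the $y$-shift is an upper bound on $\Phi$. (One must check that the $y$-shift, together with the hypotheses $\Lambda$ trivial and $C=\emptyset$, keeps us in the situation the lemma describes — it does, since neither operation touches $\Lambda$ or $C$.)

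Now suppose $\Delta$ is nontrivial, so there is $\delta\in\Delta$ with $\delta_y=1$ for some $y\in Y_0$. Perform operation (3) to replace $\Delta$ by $\{\mathbf{0},\delta\}$, and then repeatedly perform operation (10) to delete every element of $Y_0$ outside the support of $\delta$; since $A_1$ now has no columns indexed by $C$ and $\Lambda$ is trivial, this leaves the template
\[(\{1\},\emptyset,X,\emptyset,Y_0',\emptyset,A_1[X,Y_0'],\{\mathbf{0},\delta'\},\{0\}),\]
where $Y_0'$ is the support of $\delta$ and $\delta'=\delta|Y_0'$ is all-ones. Next, use operation (6) to discard the rows of $X$, which are all zero rows here once $C=\emptyset$ and $\Lambda$ is trivial (if some row of $A_1[X,Y_0']$ is nonzero, first apply operation (12) or (11) — note $Y_0$-elements with $\delta_y\neq 0$ or $\lambda$-coordinate zero can be contracted — to clear it; this is where a little care is needed, and I would structure the argument to reduce to the case $A_1[X,Y_0']=0$ before invoking (6)). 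What remains is $(\{1\},\emptyset,\emptyset,\emptyset,Y_0',\emptyset,[\emptyset],\{\mathbf{0},\delta'\},\{0\})$ with $\delta'$ all-ones on $Y_0'$.

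Finally I would identify this residual template with $\Phi_{Y_0}$ up to equivalence. A matroid conforming to $(\{1\},\emptyset,\emptyset,\emptyset,Y_0',\emptyset,[\emptyset],\{\mathbf{0},\delta'\},\{0\})$ is $M(A)$ where $A$ is a $\Gamma$-frame matrix with extra columns indexed by $Y_0'$, each row restricted to $Y_0'$ being $\mathbf{0}$ or $\delta'$; since the columns of $Y_0'$ are all equal, all but one of them are parallel and the rest are simplified away, leaving exactly a $\Gamma$-frame matrix with one appended column from $\Delta$ — which is precisely a graft matrix, i.e.\ a matroid conforming to $\Phi_{Y_0}$, and conversely. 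Hence this template is equivalent to $\Phi_{Y_0}$, giving $\Phi_{Y_0}\preceq\Phi$. If on the other hand $\Delta$ is trivial, there is nothing to prove. The main obstacle I anticipate is the bookkeeping in the middle step: verifying that operations (6), (10), (11), (12) apply in the stated order and that each produces an equivalent (not merely smaller) template, so that the final identification with $\Phi_{Y_0}$ actually yields $\Phi_{Y_0}\sim\Phi'\preceq\Phi$ rather than just $\Phi_{Y_0}\preceq\Phi$ with slack. Everything else is the now-familiar template-reduction calculus.
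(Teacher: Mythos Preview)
Your overall strategy matches the paper's, but the case split is wrong and this is a genuine gap, not a bookkeeping detail. You split into ``$\Delta$ trivial'' versus ``$\Delta$ nontrivial'' and claim the latter always yields $\Phi_{Y_0}\preceq\Phi$. That is false. Take $X=\{x\}$, $Y_0=\{y\}$, $Y_1=C=\emptyset$, $A_1=[1]$, $\Lambda=\{0\}$, $\Delta=\{0,1\}$. Here $\Delta$ is nontrivial, yet since $\Lambda$ is trivial one may add row $x$ to any $(B-X)$-row carrying a $1$ in column $y$, showing every conforming matroid is graphic; so this $\Phi$ is trivial and $\Phi_{Y_0}\npreceq\Phi$. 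Your procedure breaks exactly here: after restricting to $Y_0'=\{y\}$ the matrix $A_1[X,Y_0']=[1]$ is nonzero, and applying operation~(11) to clear it subtracts $A_1[\{x\},\cdot]$ from $\delta$, zeroing $\delta$ out. Operation~(12) is unavailable on $Y_0'$ because you have already restricted to the support of $\delta$. The ``little care'' you flag is not bookkeeping; it is the crux of the lemma.

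The correct dichotomy, which the paper uses, is whether some $\delta\in\Delta$ lies outside the row space of $A_1$. If every $\delta$ is in that row space, then because $\Lambda$ is trivial the $X$-rows vanish on all columns outside $Y_0\cup Y_1$, so adding appropriate $X$-rows to each $(B-X)$-row kills its $\Delta$-part without disturbing the frame submatrix; this exhibits $\Phi$ as equivalent to a template with $\Delta$ trivial. If instead some $\delta$ is not in the row space, first choose a basis $B'$ for $M(A_1)$ and row-reduce so that $A_1[X,B']$ is an identity over zero rows and $\delta|B'=0$; because $\delta$ was not in the row space, some $\delta_y\neq0$ with $y\notin B'$ survives. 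Then delete everything except $B'\cup\{y\}$ via~(10), drop zero $X$-rows via~(6), and contract $B'$ via~(12), which is now applicable precisely because $\delta|B'=0$. What remains is $\Phi_{Y_0}$. The manoeuvre you are missing is to force $\delta|B'=0$ \emph{before} cutting down $Y_0$, so that~(12) rather than~(11) performs the contractions and $\delta$ is never disturbed.
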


\begin{proof}
First, suppose there is an element $\delta\in\Delta$ that is not in the row space of $A_1=A_1[X_1,(Y_0\cup Y_1)]$. Recall that a $y$-shift is the operation described in Lemma ~\ref{yshift}. Repeatedly perform $y$-shifts to obtain the following template, where $Y'_0=Y_0\cup Y_1$:
\[(\{1\},\emptyset,X,Y'_0,\emptyset,A_1,\Delta,\{0\}).\]
On this template, perform operation (3) to obtain the following template:
\[(\{1\},\emptyset,X,Y'_0,\emptyset,A_1,\{\mathbf{0}, \delta\},\{0\}).\]

Choose a basis $B'$ for $M(A_1)$. By performing elementary row operations on every matrix virtually respecting $\Phi$, we may assume that $A_1[X,B']$ consists of an identity matrix with zero rows below it and that $\delta|B'$ is the zero vector. By assumption, there is some element $y\in (Y'_0-B')$ such that $\delta_y$ is nonzero. Thus, we can repeatedly perform operation (10) to obtain the following template:
\[(\{1\},\emptyset,X,B'\cup y,\emptyset,A_1[X,B'\cup y],\{\mathbf{0}, \delta|(B'\cup y)\},\{0\}).\]
Now, we can repeatedly perform operation (6) and then operation (12) to obtain the following template:
\[(\{1\},\emptyset,\emptyset,\{y\},\emptyset,[\emptyset],\mathbb{Z}/2\mathbb{Z},\{0\}),\]
which is $\Phi_{Y_0}$.

Now suppose that every element $\delta\in\Delta$ is in the row space of $A_1=A_1[X,(Y_0\cup Y_1)]$. Since $\Lambda$ is trivial, by performing elementary row operations on every matrix virtually respecting $\Phi$, we obtain a template equivalent to $\Phi$ with $\Delta$ trivial.
\end{proof}

\begin{lemma}
\label{PhiY1}
 Let $\Phi=(\{1\},C,X,Y_0,Y_1,A_1,\Delta,\Lambda)$ be a binary frame template with $\Lambda$ and $\Delta$ trivial. If $M(A_1[X_1,(Y_0\cup Y_1)])$ has a circuit $Y'$ with $|Y'\cap Y_1|\geq3$, then $\Phi_{Y_1}\preceq\Phi$.
\end{lemma}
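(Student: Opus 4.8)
The plan is to mimic the structure of the proofs of Lemmas~\ref{PhiD}, \ref{PhiC}, \ref{PhiCD}, and \ref{PhiY0}: strip $\Phi$ down by reductions and template-minor operations until we reach a template that is visibly equivalent to $\Phi_{Y_1}$, showing $\Phi_{Y_1}\preceq\Phi$ along the way. Since $\Lambda$ and $\Delta$ are already trivial, $C$ contributes nothing essential; I would first eliminate $C$ and $C$-related structure using operations (7) and (8) (as in the end of Lemma~\ref{PhiCD}, contraction of $C$ has no effect once $\Lambda,\Delta$ are trivial), reducing to a template of the form $(\{1\},\emptyset,X,Y_0,Y_1,A_1,\{0\},\{0\})$ with $X=X_1$. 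Now the hypothesis is that $M(A_1[X_1,Y_0\cup Y_1])=M(A_1)$ has a circuit $Y'$ with $|Y'\cap Y_1|\geq 3$.

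Next I would use operations (4) and (10) to delete all columns of $A_1$ outside $Y'$: a column indexed by $Y_1$ is removed by operation (4), and a column indexed by $Y_0$ is removed by operation (10) (it deletes $y\in Y_0$ from every conforming matroid). This leaves the template $(\{1\},\emptyset,X_1,Y'\cap Y_0,Y'\cap Y_1,A_1[X_1,Y'],\{0\},\{0\})$, where now the columns of $A_1[X_1,Y']$ form a single circuit of $M(A_1[X_1,Y'])$, i.e.\ they are minimally linearly dependent. Over $\mathrm{GF}(2)$ a circuit means the columns sum to zero and every proper subset is independent; choosing a basis consisting of all but one column of $Y'$ and performing operation (5) repeatedly, I can bring $A_1[X_1,Y']$ to the standard form of a binary circuit: an identity matrix $I_{|Y'|-1}$ together with one all-ones column. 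Then operation (6) removes any resulting zero rows, so $X_1$ becomes a set of size $|Y'|-1$ and $A_1$ has exactly the shape of a circuit matrix. At this point we have reduced to a template with $C=\emptyset$, $\Lambda=\Delta=\{0\}$, $|X|=|Y'|-1$, $A_1$ the $|Y'|$-circuit matrix, and $Y_1$ consisting of (at least) three of the columns.

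To finish, I would show this template is equivalent to $\Phi_{Y_1}$, or at least that $\Phi_{Y_1}\preceq\Phi$. First apply $y$-shifts (Lemma~\ref{yshift}) to move all columns of $Y'\cap Y_0$ into $Y_1$; this only moves us down in the preorder, and now all $|Y'|$ columns are in $Y_1$ with $|X|=|Y'|-1$. The resulting template describes matroids obtained from $M(A)$ by deleting $Y_1$, where $A$ is built by optionally adding any $Y_1$-column to any $Z$-column; restricting to the case $|Y_1|=3$ (delete the extra $Y_1$-elements via operation (4), adjusting $A_1$ accordingly, which when $|Y'\cap Y_1|>3$ still leaves three columns in a dependent configuration) and doing one more normalization of the $3$ columns via operation (5) yields precisely $A_1=\begin{bmatrix}1&0&1\\0&1&1\end{bmatrix}$ with $|X|=2$, i.e.\ $\Phi_{Y_1}$. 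The main obstacle I anticipate is the bookkeeping in the step where $|Y'\cap Y_1|>3$: I must verify that after operation (4) removes the surplus $Y_1$-columns (and the corresponding restriction of the now-trivial $\Delta$), the three remaining $Y_1$-columns together with the identity part still form the matrix $A_1$ of $\Phi_{Y_1}$ after a change of basis, rather than something larger; this amounts to checking that a binary circuit restricted to three of its elements, after deleting the rest, gives the $3$-point line configuration, which is routine but needs care about which rows survive operation (6). Everything else is a direct translation of the reduction toolkit already developed.
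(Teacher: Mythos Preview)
Your overall strategy is right up through restricting $A_1$ to the circuit $Y'$ via operations (4) and (10). After that, two genuine errors creep in.

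First, the $y$-shift goes the wrong way. Lemma~\ref{yshift} moves an element from $Y_1$ into $Y_0$ and shows the resulting template is $\preceq$ the original; there is no operation in the toolkit that moves an element from $Y_0$ into $Y_1$ while staying below $\Phi$ in the preorder. So your step ``apply $y$-shifts to move all columns of $Y'\cap Y_0$ into $Y_1$'' is not available.

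Second, and more seriously, your plan to reach a three-element circuit by \emph{deleting} surplus columns via operation~(4) cannot work. If $Y'$ is a circuit and you delete any elements of it, what remains is an independent set, not a smaller circuit. So ``a binary circuit restricted to three of its elements, after deleting the rest, gives the $3$-point line configuration'' is false: you get three independent columns, which after row reduction and operation~(6) become $I_3$, not $\begin{bmatrix}1&0&1\\0&1&1\end{bmatrix}$. To shrink a circuit you must \emph{contract} elements, not delete them.

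The paper's argument fixes both issues at once. After restricting to $Y'$, choose any three-element subset $Y''\subseteq Y'\cap Y_1$ (possible since $|Y'\cap Y_1|\geq 3$) and use $y$-shifts in the correct direction to push every element of $(Y'\cap Y_1)-Y''$ into $Y_0$. Now $Y_1=Y''$ and $Y_0=Y'-Y''$. Since $\Lambda$ is trivial, operation~(11) applies to every $y\in Y_0$, and contracting all of $Y'-Y''$ from the circuit $Y'$ leaves $Y''$ as a three-element circuit in the resulting $A_1$. A final clean-up with operations~(5) and~(6) produces exactly $\Phi_{Y_1}$. The key point you missed is that the passage from a large circuit to a three-element one must go through contraction (operation~(11) on $Y_0$), and the $y$-shifts are used to make those surplus elements eligible for operation~(11) by moving them into $Y_0$.
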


\begin{proof}
 Any matroid conforming to $\Phi$ is obtained by contracting $C$. Since $\Lambda$ and $\Delta$ are trivial, we may assume that $C=X_0=\emptyset$ and therefore that $X=X_1$. Repeatedly perform operation (4) and then operation (10) on $\Phi$ to obtain the following template:
\[(\{1\},\emptyset,X,Y_0\cap Y',Y_1\cap Y',A_1[X,Y'],\{0\},\{0\}).\]
Choose any 3-element subset of $Y'\cap Y_1$ and call it $Y''$. Repeatedly perform $y$-shifts to obtain the following template:
\[(\{1\},\emptyset,X,Y'-Y'',Y'',A_1[X,Y'],\{0\},\{0\}).\]
On this template, repeatedly perform operation (11) to obtain the following template:
\[(\{1\},\emptyset,X',\emptyset,Y'',A_1[X',Y''],\{0\},\{0\}),\]
where $X'$ is the subset of $X$ that remains after $Y'-Y''$ is contracted. On this template, repeatedly perform operations (5) and (6) to obtain the following template, where $X''$ is a 2-element subset of $X'$:
\[(\{1\},\emptyset,X'',\emptyset,Y'',\begin{bmatrix}
1& 0 &1\\
0& 1 & 1
\end{bmatrix},\{0\},\{0\}).\]
This template is $\Phi_{Y_1}$.
\end{proof}

\begin{lemma}
 \label{simpleY1}
If $\Phi$ is a frame template with $\Delta$ trivial, then $\Phi$ is equivalent to a template $\Phi'$ where $A_1[X,Y_1]$ is a matrix with every column nonzero and where no column is a copy of another. Moreover, if $\Phi$ is a binary frame template, then $M(A_1[X,Y_1])$ is simple.
\end{lemma}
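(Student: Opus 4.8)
The plan is to obtain $\Phi'$ from $\Phi$ by repeatedly deleting columns of $Y_1$ with operation~(4), deleting exactly the zero columns and the repeated columns of $A_1[X,Y_1]$, and then to check that each such deletion yields an equivalent template. The role of the hypothesis $\Delta=\{0\}$ is this: condition~(v) in the definition of ``respects'' forces every row of $A'[B-X,\,C\cup Y_0\cup Y_1]$ into $\Delta$, hence to be zero, so in every matrix $A'$ respecting $\Phi$ and every matrix $A$ conforming to $\Phi$ the column indexed by $y\in Y_1$ is simply $A_1[X,\{y\}]$ with zeros below the $X$-rows. (Here $Y_1\cap Z=\emptyset$, so those columns are untouched when passing from $A'$ to $A$.)

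First I would remove zero columns: if $y\in Y_1$ has $A_1[X,\{y\}]=0$ then the column of $A'$ indexed by $y$ is identically zero, so for each $i\in Z$ the prescription ``replace the $i$-th column by its sum with the $y$-th column'' does nothing, and $y$ is deleted anyway in forming $M(A)/C\backslash Y_1$; thus operation~(4) applied to $y$ does not change the class of (virtually) conforming matroids. Next I would remove repeated columns: if $y,y'\in Y_1$ are distinct with $A_1[X,\{y\}]=A_1[X,\{y'\}]$, then the columns of $A'$ indexed by $y$ and $y'$ are literally equal, so any $Z$-column constructed using $y'$ may instead be constructed using $y$, and $y'$ is again deleted in forming the matroid; so operation~(4) applied to $y'$ is harmless. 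Iterating these two moves terminates with a template $\Phi'$ in which $A_1[X,Y_1]$ has all columns nonzero and pairwise distinct.

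The step needing the most care is upgrading each of these deletions from a \emph{reduction} to an \emph{equivalence}. Operation~(4) immediately gives $\mathcal{M}_w(\Phi')\subseteq\mathcal{M}_w(\Phi)$, since $\Phi'$ is a template minor of $\Phi$; the reverse inclusion $\mathcal{M}_w(\Phi)\subseteq\mathcal{M}_w(\Phi')$ is where the bookkeeping lies. One checks that a zero or repeated $Y_1$-column can always be reinstated into any matrix that virtually conforms to a template minor of $\Phi$ without changing the represented matroid, and that two equal $Y_1$-columns are interchangeable under the operations (1)--(12) (all of which affect $A_1$ only by row operations and deletions of rows and columns and keep $\Delta$ trivial); hence every template minor of $\Phi$ has the same class of virtually-conforming matroids as a template minor of $\Phi'$, and since a matroid weakly conforms to a template exactly when it virtually conforms to some template minor of it, we get $\mathcal{M}_w(\Phi)\subseteq\mathcal{M}_w(\Phi')$. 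I expect this to be the main, if routine, obstacle. Finally the ``moreover'' is immediate: over $\mathrm{GF}(2)$ the only nonzero scalar is $1$, so in $M(A_1[X,Y_1])$ a loop is exactly a zero column and a pair of parallel elements is exactly a pair of equal columns; hence ``all columns nonzero and pairwise distinct'' says precisely that $M(A_1[X,Y_1])$ is simple.
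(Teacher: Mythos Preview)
Your proposal is correct and follows essentially the same approach as the paper: remove zero and duplicate columns of $A_1[X,Y_1]$ via operation~(4), arguing that such columns are redundant because (with $\Delta$ trivial) every $Z$-column built from a zero $Y_1$-column can instead be placed in the $\Gamma$-frame block with the zero element of $\Lambda$ on top, and every $Z$-column built from a duplicate $Y_1$-column can be built from its twin. If anything you are more careful than the paper, which argues only at the level of virtually conforming matroids and leaves the passage to $\mathcal{M}_w$ implicit; your paragraph on reinstating the deleted column through template minors is exactly the bookkeeping that justifies the reverse inclusion $\mathcal{M}_w(\Phi)\subseteq\mathcal{M}_w(\Phi')$.
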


\begin{proof}
 Let $A$ be a matrix that virtually conforms to $\Phi$. Since $\Delta$ is trivial, the columns of $A$ indexed by elements of $Z$ are formed by placing a column of $A_1[X,Y_1]$ on top of a unit column or a zero column. These columns can be made using any copy of the same column of $A_1[X,Y_1]$; so only one copy is needed. If any column of $A_1[X,Y_1]$ is a zero column, then any column indexed by an element of $Z$ that is made with this zero column can also be made as a column indexed by an element of $E-(Z\cup Y_0\cup Y_1\cup C)$ and choosing for the element of $\Lambda$ the zero vector. Thus, no zero columns of $A_1[X,Y_1]$ are needed.

In the binary case, $M(A_1[X,Y_1])$ has no parallel elements because any such elements index copies of the same column. Also, $M(A_1[X,Y_1])$ has no loops because every column of $A_1[X,Y_1]$ is nonzero. Therefore, $M(A_1[X,Y_1])$ is simple.
\end{proof}

\begin{lemma}
 \label{HL}
Let $\Phi$ be a binary frame template. Then at least one of the following is true:

\begin{itemize}
\item[(i)]$\Phi_0\sim\Phi$
\item[(ii)]$\Phi'\preceq \Phi$ for some $\Phi'\in\{\Phi_X,\Phi_C,\Phi_{CX},\Phi_{Y_0},\Phi_{Y_1}\}$
\item[(iii)]$\Phi$ is equivalent to a template where $C=\emptyset$, where $\Lambda$ and $\Delta$ are trivial, and where $A_1$ is of the following form, with $Y_0=V_0\cup V_1$, with $L$ an arbitrary binary matrix, and with each column of $H$ containing at most two nonzero entries:
\begin{center}
\begin{tabular}{ |c|c|c| }
\multicolumn{1}{c}{$Y_1$}&\multicolumn{1}{c}{$V_0$}&\multicolumn{1}{c}{$V_1$}\\
\hline
$I$&0&$H$\\
\hline
0&$I$&$L$\\
\hline
\end{tabular}
.
\end{center}
\end{itemize}
\end{lemma}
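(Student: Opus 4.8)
The plan is to apply the structural lemmas proved earlier in this section in sequence, peeling off the groups $\Lambda$, $\Delta$, and the set $C$ one at a time, and in each case either discovering one of the five basic templates below $\Phi$ in the preorder or reducing to a more restrictive form. So I would begin by invoking Lemma~\ref{PhiCD}: either $\Phi_C \preceq \Phi$ (case (ii), done), or $\Phi$ is equivalent to a template with $\Lambda|X_1$ nontrivial and $\Phi_X\preceq\Phi$ (case (ii) again via Lemma~\ref{PhiD}, done), or $\Phi$ is equivalent to a template with $\Lambda|X_0$ nontrivial and $\Phi_{CX}\preceq\Phi$ (case (ii), done), or $\Phi$ is equivalent to a template with $\Lambda$ trivial and $C=\emptyset$. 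In the surviving case I continue with this last form of $\Phi$.

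Next I apply Lemma~\ref{PhiY0} to the template with $\Lambda$ trivial and $C=\emptyset$: either $\Phi_{Y_0}\preceq\Phi$ (case (ii), done) or $\Phi$ is equivalent to a template with $\Delta$ (as well as $\Lambda$) trivial and $C=\emptyset$. Now I have $\Lambda$, $\Delta$ trivial and $C=\emptyset$, and $A_1=A_1[X,Y_0\cup Y_1]$ since $X_0=\emptyset$ (because $C=\emptyset$ in standard form forces $C_0=\emptyset$, hence $X_0=\emptyset$, hence $X=X_1$). Now I split on whether $M(A_1[X,Y_0\cup Y_1])$ has a circuit $Y'$ with $|Y'\cap Y_1|\geq 3$. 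If it does, Lemma~\ref{PhiY1} gives $\Phi_{Y_1}\preceq\Phi$, so we are in case (ii) and done. So assume henceforth that $M(A_1)$ has no such circuit.

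It remains to massage $A_1$ into the displayed block form of case (iii). First apply Lemma~\ref{simpleY1}: since $\Delta$ is trivial, $\Phi$ is equivalent to a template where $A_1[X,Y_1]$ has all columns nonzero and no repeated column, so $M(A_1[X,Y_1])$ is simple. Choose a maximal independent subset of the columns indexed by $Y_1$; by performing operation (5) (change of basis on $\Lambda$, which is trivial so harmless) we may assume these columns form an identity matrix $I$ sitting in rows indexed by a set I will call $X_{Y_1}$, with zero rows below; but since $M(A_1[X,Y_1])$ is simple with $r(M(A_1[X,Y_1]))$ equal to the number of $Y_1$-columns only if those columns are already independent, and in general the remaining $Y_1$-columns are expressed over $I$. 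Here is where I have to be slightly careful: the no-big-circuit hypothesis is exactly what forces every $Y_1$-column other than the chosen basis to in fact be a copy of a basis column or a sum of at most two of them, but a cleaner route is: after the change of basis, any dependent $Y_1$-column together with the basis columns it is supported on forms a circuit inside $Y_1$; by the hypothesis this circuit has at most $2$ elements meeting $Y_1$, which is impossible once the basis itself lies in $Y_1$ unless there are no dependent $Y_1$-columns at all. Hence $A_1[X,Y_1]=\left[\begin{smallmatrix}I\\0\end{smallmatrix}\right]$ after the change of basis; delete the zero rows using operation (6) (legitimate since $\Lambda$ trivial), so the rows of $X$ split as $X=X_{Y_1}$ only. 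Finally partition $Y_0$ into $V_0$, the columns whose restriction to $X_{Y_1}$ is zero, and $V_1$, the rest; put $H=A_1[X_{Y_1},V_1]$ and $L$ the part of those columns below (but there is no "below" left). I expect the honest bookkeeping here — getting the two-row block structure with $H$ having at most two nonzero entries per column and choosing the right row partition so the identity blocks line up — to be the main obstacle; the essential content is that the no-$3$-element-$Y_1$-circuit condition, combined with simplicity of $M(A_1[X,Y_1])$ from Lemma~\ref{simpleY1}, pins down the $Y_1$-part of $A_1$ up to the claimed form, and then the remaining columns ($Y_0$) are arbitrary subject only to the frame condition inherited from how they interact with the $\Gamma$-frame part, which is recorded by $H$. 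All other steps are direct citations of Lemmas~\ref{PhiD}, \ref{PhiC}, \ref{PhiCD}, \ref{PhiY0}, \ref{PhiY1}, and \ref{simpleY1}.
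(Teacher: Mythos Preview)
Your approach matches the paper's exactly through the invocations of Lemmas~\ref{PhiCD}, \ref{PhiY0}, \ref{PhiY1}, and \ref{simpleY1}. The gap is precisely where you flag it, and it contains a genuine error: you cannot ``delete the zero rows using operation~(6)''. Operation~(6) requires the entire row of $A'$ to be zero, i.e.\ $A_1[\{x\},Y_0\cup Y_1]=0$ (the $\Lambda$-part and $Z$-part are already zero since $\Lambda$ is trivial). The rows below your $Y_1$-identity are zero only on $Y_1$; they may well be nonzero on $Y_0$, and in fact they \emph{must} be retained to support the second identity block in~(iii). So the claim ``there is no `below' left'' is wrong, not just incomplete.

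The fix, which is what the paper does, is this. After row-reducing so that $A_1[X,Y_1]$ is an identity on a row set $X_{Y_1}$ with zeros below (and using Lemma~\ref{simpleY1} to see there are no further $Y_1$-columns), extend to a column basis of $A_1$ by choosing a set $V_0\subseteq Y_0$; row-reduce again so that $A_1[X,V_0]$ is an identity on a second block of rows, with zeros elsewhere. Any genuinely zero rows left over may now be removed by operation~(6). Set $V_1=Y_0- V_0$. Each column $v\in V_1$, together with the columns of $Y_1\cup V_0$ in its support, forms a circuit of $M(A_1)$ whose intersection with $Y_1$ is exactly the set of nonzero entries of $v$ in the $X_{Y_1}$-rows, i.e.\ the support of the corresponding column of $H$. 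The no-three-in-$Y_1$ hypothesis therefore gives $|$support$|\leq 2$, which is the ``at most two nonzero entries per column of $H$'' condition you never derived.
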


\begin{proof}
 Suppose neither (i) nor (ii) holds. By Lemma ~\ref{PhiCD}, we may assume that $\Lambda$ is trivial and $C=\emptyset$. By Lemma ~\ref{PhiY0}, we may assume that $\Delta$ is trivial. By Lemma ~\ref{PhiY1}, every dependent set of $M(A)=M(A_1[X_1,(Y_0\cup Y_1)])$ has an intersection with $Y_1$ with size at most 2. So by elementary row operations, we may assume that $A_1$ is of the following form, where  $Y_0=V_0\cup V_1$, where $L$ is an arbitrary binary matrix, where $K$ consists of unit and zero columns, and where each column of $H$ contains at most two nonzero entries:

\begin{center}
\begin{tabular}{ |c|c|c|c| }
\multicolumn{2}{c}{$Y_1$}&\multicolumn{1}{c}{$V_0$}&\multicolumn{1}{c}{$V_1$}\\
\hline
$I$&$K$&0&$H$\\
\hline
0&0&$I$&$L$\\
\hline
\end{tabular}
.
\end{center}
However, by Lemma ~\ref{simpleY1}, we may assume that $K$ is an empty matrix. Thus, (iii) holds.
\end{proof}

\begin{theorem} \label{minimal}
Let $\Phi$ be a binary frame template. Then at least one of the following is true:

\begin{itemize}
\item[(i)]$\Phi_0\sim\Phi$
\item[(ii)]$\Phi'\preceq \Phi$ for some $\Phi'\in\{\Phi_X,\Phi_C,\Phi_{CX},\Phi_{Y_0},\Phi_{Y_1}\}$
\item[(iii)]There exist $k,l\in\mathbb{Z}_+$ such that no simple, vertically $k$-connected matroid with at least $l$ elements either virtually conforms or virtually coconforms to $\Phi$.
\end{itemize}
\end{theorem}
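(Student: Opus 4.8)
The plan is to build on Lemma~\ref{HL} and then close the remaining case. If outcome (i) or (ii) of Lemma~\ref{HL} holds we immediately obtain conclusion (i) or (ii) of the theorem, so I may assume $\Phi$ is equivalent to a template of the form described in Lemma~\ref{HL}(iii): $C=\emptyset$, the groups $\Gamma,\Lambda,\Delta$ are trivial, $Y_0=V_0\cup V_1$, and $A_1$ has the stated block form, with $H$ having at most two nonzero entries per column. Write $X=X'\cup X''$, where $X'$ (of size $|Y_1|$) indexes the rows of the upper block and $X''$ (of size $|V_0|$) indexes the rows of the lower block; thus $|X|=|Y_1|+|V_0|$ is a constant depending only on $\Phi$.

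Suppose first that $V_0=\emptyset$, so that $X''=\emptyset$ and every column of $A_1$ has at most two nonzero entries. Then for any matrix $A$ virtually conforming to $\Phi$, every column of $A\backslash Y_1$ has at most two nonzero entries --- the $\Gamma$-frame columns and the $Y_0$-columns by construction, and each $Z$-column because it is the sum of a zero-or-unit column with a unit $Y_1$-column --- so every matroid in $\mathcal M_v(\Phi)$ is graphic. By Lemma~\ref{minor} every matroid in $\mathcal M_w(\Phi)$ is then graphic, so $\mathcal M_w(\Phi)\subseteq\mathcal M_w(\Phi_0)$ and conclusion (i) holds. Hence I may assume $V_0\neq\emptyset$, in which case one checks that some $V_1$-column has at least three nonzero entries (otherwise the same argument shows $\Phi$ is trivial).

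Next I address the conforming half of (iii). Let $M\in\mathcal M_v(\Phi)$ be represented by a matrix $A$ as above, with ground set $E(M)=Z\cup E'\cup Y_0$ (if $Z\cup E'=\emptyset$ then $r(M)=r_M(Y_0)\le|X|$ and we are done, so assume both parts are nonempty). Every column of $A\backslash Y_1$ indexed by $Z\cup E'$ is zero on the rows in $X''$, whereas the $V_0$-columns are distinct unit vectors supported on $X''$; hence $Z\cup E'$ does not span $M$. Since the $Y_0$-columns are supported on $X$, we have $r_M(Y_0)\le|X|$, so $\lambda_M(Y_0)=r_M(Y_0)+r_M(Z\cup E')-r(M)\le|X|$. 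Taking $k>|X|+1$, vertical $k$-connectivity of $M$ forces $Y_0$ or $Z\cup E'$ to be spanning in $M$; since the latter is impossible, $Y_0$ is spanning and $r(M)\le|X|$. A simple binary matroid of rank at most $|X|$ has fewer than $2^{|X|}$ elements, so taking $l=2^{|X|}$ ensures no large simple vertically $k$-connected matroid lies in $\mathcal M_v(\Phi)$.

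Finally, the coconforming half, which I expect to be the main obstacle. Here $M^*=M(A)\backslash Y_1$ for some $A$ virtually conforming to $\Phi$; write $N=M^*$. Contracting the $V_0$-columns of $N$ eliminates the rows in $X''$, after which every remaining column has weight at most two, so $N/V_0$ is graphic; likewise $N\backslash Y_0$ and $N/Y_0$ are graphic, hence $M/Y_0$, $M\backslash V_0$, and $M\backslash Y_0$ are cographic, with $|Y_0|$ a constant depending only on $\Phi$. Applying the partition $(Y_0,Z\cup E')$ to $N$ as before, vertical $k$-connectivity of $M$ forces either $Z\cup E'$ to be independent in $N$ --- whence $\operatorname{corank}(N)\le|Y_0|$, so $r(M)\le|Y_0|$ and $M$ is bounded --- or $Y_0$ to be independent in $N$. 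The remaining case, $Y_0$ independent in $N$, cannot be settled by a size bound alone, since \emph{cographic} matroids can be arbitrarily large and vertically $k$-connected (for example $M^*(G)$ with $G$ a graph that is $k$-connected and of girth exceeding $k$), so ``$M/Y_0$ cographic'' does not by itself bound $M$. I would attack it by exploiting the finer structure of $A$ --- that the bounded gadget on the rows $X'\cup X''$ meets the $\Gamma$-frame part only through the $|X'|$-element set $X'$, and that some $V_1$-column has at least three nonzero entries --- together with the template-minor operations (10)--(12), to show that the existence of such an $M$ forces one of $\Phi_C$, $\Phi_{CX}$, $\Phi_{Y_0}$, $\Phi_{Y_1}$ below $\Phi$ (conclusion (ii)), and that otherwise the conforming matroids, and with them $M$, decompose with bounded connectivity so that (iii) holds outright.
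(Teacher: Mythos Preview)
Your reduction via Lemma~\ref{HL} and your treatment of the conforming half are fine and match the paper's strategy (the paper uses the slightly sharper separation $Y_0\cup Z_1$ with bound $|Y_1|$ rather than $Y_0$ with bound $|X|$, but either works). The genuine gap is the final paragraph: you correctly isolate the case ``$Y_0$ independent in $N$'' as the crux, but then you only sketch an intention. This is where essentially all the work of the theorem lives, and it cannot be finessed.

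What the paper actually does here is a concrete combinatorial analysis. From $Y_0$ independent one gets that $H$ is a linearly independent matrix. The paper then introduces a forbidden $4\times 2$ submatrix $P$ (three rows in $H$, one in $L$) and shows that if $A_1$ contained such a \emph{$P$-configuration}, one could manufacture a circuit of $M(A_1)$ meeting $Y_1$ in three elements, giving $\Phi_{Y_1}\preceq\Phi$ via Lemma~\ref{PhiY1} --- contradiction. With no $P$-configuration and $H$ linearly independent, each row $i$ of $L$ determines a submatrix $S_i$ of $H$ that is forced to be the incidence matrix of a star; the paper then runs through the possible overlaps of the $S_i$ (equal star centres, one contained in another, at most one shared column) and shows in each case, after explicit row operations, that every column of the conforming matrix can be made to have at most two nonzero entries. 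That forces $\Phi\sim\Phi_0$, contradicting the standing assumption that (i) fails.

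In short, the missing piece is not a connectivity or counting argument but a structural classification of the matrix $[H\,|\,L]$ under the constraints ``$H$ linearly independent'' and ``no $P$-configuration''. Your remark that some $V_1$-column has at least three nonzero entries is in the right spirit but is neither sufficient nor the right invariant; the paper's argument needs the independence of $H$ and the absence of $P$ simultaneously. Until you supply that analysis (or an equivalent), the proof is incomplete.
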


\begin{proof} 

Suppose for contradiction that none of outcomes (i)-(iii) hold for $\Phi$. By Lemma ~\ref{HL}, outcome (iii) of that lemma holds. Note that any simple matroid $N$ virtually conforming to $\Phi$ is  a restriction of a matroid $M$ represented by a matrix of the following form, where $Z=Z_0\cup Z_1$, where $Y_0=V_0\cup V_1$, and where the $\Gamma$-frame matrix has $n$ rows and has a vector matroid isomorphic to the cycle matroid of the graph $K_{n+1}$:

\begin{center}
\begin{tabular}{ r|c|cccc|c|c|c| }
\multicolumn{2}{c}{}&\multicolumn{4}{c}{$Z_0$}&\multicolumn{1}{c}{$Z_1$}&\multicolumn{1}{c}{$V_0$}&\multicolumn{1}{c}{$V_1$}\\
\cline{2-9}
\multirow{5}{*}{$X$}&\multirow{5}{*}{0}&$1\cdots1$&&&&\multirow{4}{*}{$I$}&\multirow{4}{*}{0}&\multirow{4}{*}{$H$}\\
&&&$1\cdots1$&&&&&\\
&&&&$\cdots$&&&&\\
&&&&&$1\cdots1$&&&\\
\cline{3-9}
&&\multicolumn{4}{c|}{0}&0&$I$&$L$\\
\cline{2-9}
&$\Gamma$-frame matrix&$I$&$I$&$\cdots$&$I$&0&0&0\\
\cline{2-9}
\end{tabular}
\end{center}
Also recall from the definition of conforming to a template that $Y_0\subseteq E(N)$.

We see that
\begin{align*}
\lambda_N(Y_0\cup (Z_1\cap E(N)))
&\leq \lambda_M(Y_0\cup Z_1)\\
&= r_M(Y_0\cup Z_1)+r_M(E-(Y_0\cup Z_1))-r(M)\\
&= |V_0|+|Y_1| + |Y_1|+n-(|Y_1|+|V_0|+n)\\
&= |Y_1|.
\end{align*}

Note that each column of the above matrix, except possibly those columns indexed by $V_1$, has at most two nonzero entries. Thus, $M$ is graphic and $\Phi$ is trivial if $V_0=\emptyset$. Since (i) does not hold, $\Phi$ is nontrivial. Therefore, $V_0\neq\emptyset$, and $E(N)-(Y_0\cup Z_1)$ is not spanning. Thus, if $k>|Y_1|+1$, then $N$ is not vertically $k$-connected unless $Y_0\cup(Z_1\cap E(N)) $ is spanning in $N$.  This implies that $n=0$; in that case, $N$ is only simple if the $\Gamma$-frame matrix is a $0\times0$ matrix. This implies that $|E(N)|\leq |Y_0\cup Y_1|$. So if $l>|Y_0\cup Y_1|$, then no simple, vertically $k$-connected matroid with at least $l$ elements virtually conforms to $\Phi$.

Now, consider a simple matroid $N^*$ which virtually coconforms to $\Phi$. Then $N$ is a restriction of $M$ with $Y_0\subseteq E(N)$. Since a matroid and its dual have the same connectivity function, we have $\lambda_{N^*}(Y_0\cup (Z_1\cap E(N))\leq |Y_1|$. So if $k>|Y_1|+1$, then $N^*$ is not vertically $k$-connected unless either $Y_0\cup (Z_1\cap E(N))$ or $E(N)-(Z_1\cup Y_0)$ is spanning in $N^*$, implying that either $E(N)-(Z_1\cup Y_0)$ or $Y_0\cup (Z_1\cap E(N))$ is independent in $N$. If $E(N)-(Z_1\cup Y_0)$ is independent in $N$, then
\begin{align*}
|E(N)-(Z_1\cup Y_0)|&=r_N(E(N)-(Z_1\cup Y_0))\\
&\leq r_M(E(M)-(Z_1\cup Y_0))\\
&=|Y_1|+n.
\end{align*}
By the formula for corank, we have
\begin{align*}
r_{N^*}(E(N)-(Z_1\cup Y_0))&\leq r_{M^*}(E(N)-(Z_1\cup Y_0))\\
&= |E(N)-(Z_1\cup Y_0)|+r_M(Z_1\cup Y_0)-r(M)\\
&\leq |Y_1|+n+|Y_1|+|V_0|-(|Y_1|+|V_0|+n)\\
&=|Y_1|.
\end{align*}
Since $N^*$ is simple and binary, we have $|E(N)-(Z_1\cup Y_0)|\leq 2^{|Y_1|}-1$. This implies that $|E(N)|\leq 2^{|Y_1|}-1+|Y_1|+|Y_0|$. Thus, if we set $l$ greater than this value, then no simple, vertically $k$-connected matroid with at least $l$ elements virtually coconforms to $\Phi$ unless $Y_0\cup (Z_1\cap E(N))$ is independent in $N$. Since (iii) does not hold, this must be true for some matroid $N$. In particular, $Y_0=V_0\cup V_1$ is independent in $N$, implying that $H$ is a linearly independent matrix. 

Let $P$ denote the matrix 
\[P=
\left[
\begin{array}{cc}
1& 0\\
0& 1\\
0&1\\
\hline
1&1\\
\end{array}
\right].
\]
 Suppose $A_1[X,V_1]$ has $P$ as a submatrix, with the first three rows of $P$ contained in $H$ and the last row of $P$ contained in $L$. Then $A_1$ contains the following submatrix, with the first three columns contained in $A_1[X,Y_1]$ and the last two contained in $A_1[X,V_1]$:

\[
\left[
\begin{array}{ccc|cc}
1&0&0&1&0\\
0&1&0&0&1\\
0&0&1&0&1\\
\hline
0&0&0&1&1\\
\end{array}
\right]. 
\]
After contracting all other elements of $Y_1$ by repeatedly performing $y$-shifts and operation (12), the columns of this submatrix form a circuit in $M(A_1)$ whose intersection with $Y_1$ has size 3. However, we have already deduced by Lemma ~\ref{PhiY1} that this is impossible. Therefore, $A_1$ does not contain $P$ as a submatrix, with the first three rows of $P$ contained in $H$ and the last row of $P$ contained in $L$. We will refer to this fact by saying that $A_1$ has no \textit{$P$-configuration}.

Let $\{1,2,\dots, m\}$ be the rows of $L$. (So $|V_0|=m$.) Let $S_i$ be the submatrix of $H$ obtained by restricting $H$ to the columns $j$ such that $L_{i,j}=1$. Recall that $H$, and therefore $S_i$, contain at most two nonzero entries per column. Also, since $H$ is linearly independent, each column has at least one nonzero entry, and no column is a copy of another. Suppose a column $e$ of $S_i$ contains exactly two nonzero entries. Since $A_1$ has no $P$-configuration, all other columns of $S_i$ must contain a nonzero entry in exactly one of the same rows as $e$. Suppose that there are columns $f$ and $g$ in $S_i$ such that $f$ contains a nonzero entry in one of the same rows as $e$, but $g$ contains a nonzero entry in the other row. Then $S_i$ contains the following submatrix:
\[\begin{blockarray}{ccc}
e & f & g \\
\begin{block}{[ccc]}
1 & 1 & 0 \\
1 & 0 & 1 \\
\end{block}
\end{blockarray}.\]
Since $H$ is a linearly independent matrix, $f$ or $g$ (say $f$) must have an additional nonzero entry in $H$. To avoid $f$ and $g$ forming a $P$-configuration, $g$ must have an additional nonzero entry in the same row as $f$. Therefore, $S_i$ contains the following submatrix:
\[\begin{blockarray}{ccc}
e & f & g \\
\begin{block}{[ccc]}
1 & 1 & 0 \\
1 & 0 & 1 \\
0 & 1 & 1\\
\end{block}
\end{blockarray}.\]
Since each column of $H$ contains at most two nonzero entries, $\{e,f,g\}$ is a dependent set of columns, contradicting the assumption that $H$ is linearly independent.

Therefore, we deduce that each $S_i$ either consists entirely of unit columns or contains a row $s_i$ consisting entirely of 1s. Note that each $S_i$ is the incidence matrix of a star, with possibly one row removed. We will call $s_i$ the \textit{star center} of row $i$. If $S_i$ consists entirely of unit columns, then we define its star center to be $s_i=\emptyset$.

If the sets of columns of all the $S_i$ are pairwise disjoint, then by adding each row $i$ to its star center $s_i$, we see that every matroid virtually conforming to $\Phi$ can be represented by a matrix with at most two nonzero entries per column. Thus, $\Phi$ is trivial, contradicting the assumption that (i) does not hold. Also, if $i$ and $j$ are distinct rows of $L$ with distinct star centers $s_i$ and $s_j$, then $S_i$ and $S_j$ can have at most one column in common because otherwise, the columns they have in common form a linearly dependent set in $H$.

Now suppose there are $S_i$ and $S_j$ with $s_i=s_j$. Also, suppose that neither $S_i$ nor $S_j$ is a submatrix of the other. Then $A_1$ contains the following submatrix. In fact, after repeatedly performing $y$-shifts, operation  (11), and operation (10), we may assume that $A_1$ is the following matrix, with the first three columns indexed by $Y_1$, the next two indexed by $V_0$, and the last three by $V_1$:
\[
\left[
\begin{array}{ccc|cc|ccc}
1&0&0&0&0&1&1&1\\
0&1&0&0&0&0&1&0\\
0&0&1&0&0&0&0&1\\
\hline
0&0&0&1&0&1&1&0\\
0&0&0&0&1&0&1&1\\
\end{array}
\right].
\]
Add the fourth row to the first, and swap the fourth and sixth columns to obtain the following matrix:
\[
\left[
\begin{array}{ccc|cc|ccc}
1&0&0&0&0&1&0&1\\
0&1&0&0&0&0&1&0\\
0&0&1&0&0&0&0&1\\
\hline
0&0&0&1&0&1&1&0\\
0&0&0&0&1&0&1&1\\
\end{array}
\right].
\]
The last two columns of this matrix contain a $P$-configuration.

Now suppose there are matrices $S_i$ and $S_j$ so that $S_j$ is a submatrix of $S_i$. Then $A_1$ contains a submatrix obtained by deleting columns from a matrix of the following form, where the left portion comes from the set $V_0$, the upper-right portion comes from the matrix $H$, the lower-left portion comes from the matrix $L$, and $x$ is 1 or 0 depending on whether or not the last column is contained in $S_j$:
\[
\left[
\begin{array}{cc|ccccccc}
0&0&1&\cdots&1&1&\cdots&1&1\\
0&0&1&&&&&&0\\
\vdots&\vdots&&\ddots&&&&&\vdots\\
0&0&&&1&&&&0\\
0&0&&&&1&&&0\\
\vdots&\vdots&&&&&\ddots&&\vdots\\
0&0&&&&&&1&0\\
\hline
1&0&1&\cdots&1&1&\cdots&1&1\\
0&1&0&\cdots&0&1&\cdots&1&x
\end{array}
\right].
\]

Choose any column contained in $S_j$ and perform row operations so that this column becomes a unit column with nonzero entry in $L$. Then we obtain the following matrix: 
\[
\left[
\begin{array}{cc|cccccccc}
0&1&1&\cdots&1&0&0&\cdots&0&x+1\\
0&0&1&&&&&&&0\\
\vdots&\vdots&&\ddots&&&&&&\vdots\\
0&0&&&1&&&&&0\\
0&1&&&&0&1&\cdots&1&x\\
0&0&&&&&1&&&0\\
\vdots&\vdots&&&&&&\ddots&&\vdots\\
0&0&&&&&&&1&0\\
\hline
1&1&1&\cdots&1&0&\cdots&\cdots&0&x+1\\
0&1&0&\cdots&0&1&\cdots&\cdots&1&x
\end{array}
\right].
\]

Now, by swapping the appropriate columns, we obtain the following:
\[
\left[
\begin{array}{cc|cccccccc}
0&0&1&\cdots&1&1&0&\cdots&0&x+1\\
0&0&1&&&&&&&0\\
\vdots&\vdots&&\ddots&&&&&&\vdots\\
0&0&&&1&&&&&0\\
0&0&&&&1&1&\cdots&1&x\\
0&0&&&&&1&&&0\\
\vdots&\vdots&&&&&&\ddots&&\vdots\\
0&0&&&&&&&1&0\\
\hline
1&0&1&\cdots&1&1&0&\cdots&0&x+1\\
0&1&0&\cdots&0&1&\cdots&\cdots&1&x
\end{array}
\right].
\]

We see that in this new matrix, $S_i$ and $S_j$ have only one column in common and $s_i\neq s_j$. The last column is in $S_i$ if $x=0$ and $S_j$ if $x=1$. Thus, this case reduces to the final case that remains to be checked: for all $i$ and $j$, we have $s_i\neq s_j$ and $S_i$ and $S_j$ have at most one column in common. Since each column of $H$ contains at most two nonzero entries, and since all $S_i$ have distinct star centers, we see that a column of $H$ can be contained in at most two $S_i$. By adding each row $i$ to its star center $s_i$, one can see that every matrix virtually conforming to $\Phi$ can be rewritten so that every column contains at most two nonzero entries. Therefore, $\Phi$ is trivial, and (i) holds.

This completes the contradiction and proves the result.
\end{proof}

Outcome (iii) of Theorem ~\ref{minimal} only occurs in very specific situations. In fact, due to connectivity considerations, it is not needed in order to use Corollary ~\ref{weakframe}.

\begin{definition}
\label{describes}
Let $\mathcal{M}$ be a minor-closed class of binary matroids, and suppose there exist $k,l,m\in \mathbb{Z}_+$ and a set $\mathcal{T}_{\mathcal{M}}=\{\Phi_1,\dots, \Phi_s, \Psi_1,\dots, \Psi_t\}$ of binary frame templates such that
\begin{itemize}
\item $\mathcal{M}$ contains each of the classes $\mathcal{M}_w(\Phi_1),\dots,\mathcal{M}_w(\Phi_s)$,
\item $\mathcal{M}$ contains the duals of the matroids in each of the classes $\mathcal{M}_w(\Psi_1)$,$\dots$,$\mathcal{M}_w(\Psi_t)$,
\item if $M$ is a simple vertically $k$-connected member of $\mathcal{M}$ with at least $l$ elements and with no $PG(m-1,2)$ minor, then either $M$ is a member of at least one of the classes $\mathcal{M}_v(\Phi_1),\dots,\mathcal{M}_v(\Phi_s)$ or $M^*$ is a member of at least one of the classes $\mathcal{M}_v(\Psi_1),\dots,\mathcal{M}_v(\Psi_t)$, and
\item for each template $\Phi\in\mathcal{T}_{\mathcal{M}}$, either $\Phi$ is trivial or $\Phi'\preceq \Phi$ for some $\Phi'\in\{\Phi_X,\Phi_C,\Phi_{CX},\Phi_{Y_0},\Phi_{Y_1}\}$.
\end{itemize}

We say that $\mathcal{T}_{\mathcal{M}}$ \textit{describes} $\mathcal{M}$.
\end{definition}

By combining Corollary ~\ref{weakframe} with Theorem ~\ref{minimal}, one can observe that every proper minor-closed class $\mathcal{M}$ of binary matroids can be described by a set of templates. Moreover, that set is nonempty if and only if $\mathcal{M}$ contains all graphic matroids or all cographic matroids.

\begin{corollary}
\label{Y0Y1}
Let $\mathcal{M}$ be a minor-closed class of binary matroids, and let $\{\Phi_1,\dots, \Phi_s, \Psi_1,\dots, \Psi_t\}$ be a set of templates describing $\mathcal{M}$. If any of these templates is nontrivial, then $\mathcal{M}$ contains $\mathcal{M}(\Phi_{Y_0})$, $\mathcal{M}(\Phi_{Y_1})$, $\mathcal{M}^*(\Phi_{Y_0})$, or $\mathcal{M}^*(\Phi_{Y_1})$.
\end{corollary}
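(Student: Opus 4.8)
The proof will be a short deduction from Definition~\ref{describes} together with Lemma~\ref{YCD}. First I would record two elementary facts (writing $\mathcal{M}^*(\Phi)$ for the class of matroids coconforming to $\Phi$, i.e.\ the duals of the members of $\mathcal{M}(\Phi)$). (a) For every template $\Phi$ we have $\mathcal{M}(\Phi)\subseteq\mathcal{M}_w(\Phi)$, since a matroid conforming to $\Phi$ certainly virtually conforms to $\Phi$, and virtually conforming is the special case of weakly conforming in which the template minor is taken to be $\Phi$ itself; dually $\mathcal{M}^*(\Phi)\subseteq\mathcal{M}^*_w(\Phi)$. (b) If $\Phi\preceq\Phi'$ then $\mathcal{M}(\Phi)\subseteq\mathcal{M}_w(\Phi)\subseteq\mathcal{M}_w(\Phi')$, and likewise the duals of the members of $\mathcal{M}(\Phi)$ lie in $\mathcal{M}^*_w(\Phi')$.

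The key step is to check that each of the five templates in the list $\{\Phi_X,\Phi_C,\Phi_{CX},\Phi_{Y_0},\Phi_{Y_1}\}$ has either $\Phi_{Y_0}$ or $\Phi_{Y_1}$ below it in the preorder $\preceq$. This is immediate from Lemma~\ref{YCD}: $\Phi_{Y_1}\preceq\Phi_X$ by part (1); $\Phi_{Y_1}\preceq\Phi_C$ by part (2); $\Phi_{Y_1}\preceq\Phi_X\preceq\Phi_{CX}$ by parts (1) and (5) with transitivity; and $\Phi_{Y_0}\preceq\Phi_{Y_0}$ and $\Phi_{Y_1}\preceq\Phi_{Y_1}$ hold trivially. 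Hence for every $\Phi'\in\{\Phi_X,\Phi_C,\Phi_{CX},\Phi_{Y_0},\Phi_{Y_1}\}$, at least one of $\Phi_{Y_0}\preceq\Phi'$ and $\Phi_{Y_1}\preceq\Phi'$ holds.

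To finish, suppose one of the templates describing $\mathcal{M}$ is nontrivial; say $\Phi_i$ is (the argument when some $\Psi_j$ is nontrivial is identical, with ``conforms'' replaced by ``coconforms'' and the first bullet of Definition~\ref{describes} replaced by the second). By the fourth bullet of Definition~\ref{describes}, being nontrivial forces $\Phi'\preceq\Phi_i$ for some $\Phi'\in\{\Phi_X,\Phi_C,\Phi_{CX},\Phi_{Y_0},\Phi_{Y_1}\}$; combining this with the key step and transitivity of $\preceq$, either $\Phi_{Y_0}\preceq\Phi_i$ or $\Phi_{Y_1}\preceq\Phi_i$. Writing $\Phi''$ for whichever of $\Phi_{Y_0},\Phi_{Y_1}$ this is, fact (b) and the first bullet of Definition~\ref{describes} give $\mathcal{M}(\Phi'')\subseteq\mathcal{M}_w(\Phi_i)\subseteq\mathcal{M}$, so $\mathcal{M}$ contains $\mathcal{M}(\Phi_{Y_0})$ or $\mathcal{M}(\Phi_{Y_1})$; in the dual case the second bullet analogously yields $\mathcal{M}^*(\Phi_{Y_0})\subseteq\mathcal{M}$ or $\mathcal{M}^*(\Phi_{Y_1})\subseteq\mathcal{M}$. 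There is essentially no obstacle in this argument: the only points requiring care are keeping the direction of $\preceq$ straight and observing that ``nontrivial'' is precisely the hypothesis that activates the second alternative in the fourth bullet of Definition~\ref{describes}.
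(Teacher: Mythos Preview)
Your proof is correct and follows essentially the same approach as the paper's: use Definition~\ref{describes} to get some $\Phi'\in\{\Phi_X,\Phi_C,\Phi_{CX},\Phi_{Y_0},\Phi_{Y_1}\}$ below the nontrivial template, then use Lemma~\ref{YCD} and transitivity to drop to $\Phi_{Y_0}$ or $\Phi_{Y_1}$, and finally chain the containments up to $\mathcal{M}$. You are slightly more explicit than the paper in listing which parts of Lemma~\ref{YCD} handle each of the five templates, and you route the final containment through $\mathcal{M}_w$ rather than $\mathcal{M}_v$, but the substance is the same.
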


\begin{proof}
Let $\Phi$ be a nontrivial template in the set $\{\Phi_1,\dots, \Phi_s\}$. By Definition ~\ref{describes} and Lemma ~\ref{YCD}, either $\Phi_{Y_0}\preceq\Phi$ or $\Phi_{Y_1}\preceq\Phi$. If $\Phi_{Y_0}\preceq\Phi$, then 
\[\mathcal{M}(\Phi_{Y_0})\subseteq\mathcal{M}_v(\Phi_{Y_0})\subseteq\mathcal{M}_v(\Phi)\subseteq\mathcal{M},\]
where the first containment holds because every matroid conforming to a template also virtually conforms to it, the second containment holds by definition of $\preceq$, and the third containment holds by Definition ~\ref{describes}. In the case where $\Phi_{Y_1}\preceq\Phi$, a similar argument shows that $\mathcal{M}(\Phi_{Y_1})\subseteq\mathcal{M}$.

If $\Psi$ is a nontrivial template in the set $\{\Psi_1,\dots, \Psi_s\}$, a similar argument shows that either $\mathcal{M}^*(\Phi_{Y_0})\subseteq\mathcal{M}$, or $\mathcal{M}^*(\Phi_{Y_1})\subseteq\mathcal{M}$.
\end{proof}

\section{Growth Rates}
\label{Growth Rates}

Let $\mathcal{M}$ be a minor-closed class of matroids. Let $h_{\mathcal{M}}(r)$ denote the \textit{growth rate function} of $\mathcal{M}$: the function whose value at an integer $r\geq0$ is given by the maximum number of elements in a simple matroid in $\mathcal{M}$ of rank at most $r$. For a matroid $M$, we denote by $\varepsilon(M)$ the size of the simplification of $M$, that is the number of rank-1 flats of $M$. By combining the main result in ~\cite{gkw09} with earlier results of Geelen and Whittle ~\cite{gw03} and Geelen and Kabell ~\cite{gk09}, Geelen, Kung, and Whittle proved the following:

\begin{theorem}[Growth Rate Theorem]
\label{growthrate}

 If $\mathcal{M}$ is a nonempty minor-closed class of matroids, then there exists $c\in\mathbb{R}$ such that either:
\begin{itemize}
 \item[(1)] $h_{\mathcal{M}}(r)\leq cr$ for all $r$,
\item[(2)] $\binom{r+1}{2}\leq h_{\mathcal{M}}(r)\leq cr^2$ for all $r$ and $\mathcal{M}$ contains all graphic matroids,
\item[(3)] there is a prime-power $q$ such that $\frac{q^r-1}{q-1}\leq h_{\mathcal{M}}(r)\leq cq^r$ for all $r$ and $\mathcal{M}$ contains all $\mathrm{GF}(q)$-representable matroids, or
\item[(4)] $h_{\mathcal{M}}$ is infinite and $\mathcal{M}$ contains all simple rank-2 matroids.
\end{itemize}
\end{theorem}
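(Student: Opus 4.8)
The plan is to prove the theorem by first sorting $\mathcal{M}$ according to how ``large'' its members can be at rank $2$, and then bootstrapping an upper bound on $h_{\mathcal{M}}$ down from exponential to quadratic to linear, each reduction being forced by a structural feature that $\mathcal{M}$ must contain. The organizing principle is that every outcome simultaneously supplies a lower-bound witness — a long line, a projective geometry, or a clique — and a matching order-of-magnitude upper bound.

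First I would dispose of case (4). If $U_{2,n}\in\mathcal{M}$ for every $n$, then since $\varepsilon(U_{2,n})=n$ and $r(U_{2,n})=2$ we get $h_{\mathcal{M}}(r)=\infty$ for all $r\ge 2$ and $\mathcal{M}$ contains all simple rank-$2$ matroids. Otherwise some $U_{2,n_0}$ is excluded, hence (minor-closedness) $U_{2,n}\notin\mathcal{M}$ for all $n\ge n_0$; picking a prime power $q$ with $q+2\ge n_0$ gives $U_{2,q+2}\notin\mathcal{M}$, so every simple member of $\mathcal{M}$ has at most $q+1$ points on a line. Kung's theorem then yields the a priori exponential bound $h_{\mathcal{M}}(r)\le\frac{q^{r}-1}{q-1}$, which is already the upper bound demanded in case (3) up to the constant and serves as the base of the bootstrap.

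Next I would run the dichotomy on projective-geometry minors. If there is a prime power $q'$ with $PG(m-1,q')\in\mathcal{M}$ for all $m$, then $\mathcal{M}$ contains every $\mathrm{GF}(q')$-representable matroid (each rank-$m$ such matroid embeds in $PG(m-1,q')$), and since $\varepsilon(PG(r-1,q'))=\frac{(q')^{r}-1}{q'-1}$ we obtain the lower bound of case (3); together with Kung's exponential upper bound this settles case (3). Otherwise, for every prime power $q'$ some projective geometry over $\mathrm{GF}(q')$ is excluded from $\mathcal{M}$, and this is exactly the hypothesis under which the density-increment machinery of \cite{gw03} and \cite{gk09} applies to improve the exponential bound to a quadratic one, $h_{\mathcal{M}}(r)\le cr^{2}$: one shows that a simple matroid whose density exceeds a quadratic threshold must contain an arbitrarily large projective-geometry minor over some prime field, via a Ramsey-type extraction inside a carefully chosen minor and an induction on rank. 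This ``no projective-geometry minor $\Rightarrow$ quadratic density'' step is the technical heart of the theorem and the step I expect to be the main obstacle.

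Finally, within the quadratic regime I would run the last dichotomy, on clique minors. If $M(K_{n})\in\mathcal{M}$ for all $n$, then $\mathcal{M}$ contains all graphic matroids, and since $\varepsilon(M(K_{r+1}))=\binom{r+1}{2}$ with $r(M(K_{r+1}))=r$ we get the lower bound $\binom{r+1}{2}\le h_{\mathcal{M}}(r)$ of case (2); combined with the quadratic upper bound just obtained, case (2) holds. If instead some $M(K_{n})$ is excluded, then the bound of \cite{gk09} on the density of matroids with bounded line length and no $M(K_{n})$-minor forces $h_{\mathcal{M}}(r)\le cr$, which is case (1). These four outcomes — all $U_{2,n}$ present; some $U_{2,q+2}$ excluded but all $\mathrm{GF}(q')$-matroids present; some projective geometry over every prime power excluded but all graphic matroids present; some clique minor excluded — are mutually exhaustive, so the proof is complete. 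The routine ingredients are the lower-bound computations for uniform matroids, projective geometries, and cliques; the substantive content, drawn from \cite{gw03} and \cite{gk09} and assembled in \cite{gkw09}, is the two successive density reductions.
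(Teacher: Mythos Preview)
The paper does not prove this theorem at all: it is quoted as a known result, attributed to Geelen, Kung, and Whittle with the sentence ``By combining the main result in~\cite{gkw09} with earlier results of Geelen and Whittle~\cite{gw03} and Geelen and Kabell~\cite{gk09}, Geelen, Kung, and Whittle proved the following,'' and then used as a black box. So there is no proof in the paper to compare your proposal against.

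That said, your outline is a faithful high-level reconstruction of how the cited references fit together, and matches the one-sentence attribution the paper gives. One small correction on which paper does which step: the exponential-to-polynomial reduction (excluding a projective geometry forces polynomial density) is the content of Geelen--Kabell~\cite{gk09}, while the quadratic-to-linear reduction (excluding a clique forces linear density, given bounded line length) is the main theorem of Geelen--Kung--Whittle~\cite{gkw09} itself, building on~\cite{gw03}; you have attributed the final linear bound to~\cite{gk09}. This does not affect the logical structure of your sketch, only the citations.
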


If outcome (2) of the Growth Rate Theorem holds for a minor-closed class $\mathcal{M}$, then $\mathcal{M}$ is said to be \textit{quadratically dense}. In this section, we will consider growth rates of some quadratically dense classes of binary matroids. Let $\mathcal{EX}(F)$ denote the class of binary matroids with no $F$-minor. If $f$ and $g$ are functions, we write $f(r)\approx g(r)$ if $f(r)=g(r)$ for all but finitely many $r$.

Since the growth rate function for the class of graphic matroids is $\binom{r+1}{2}$, the Growth Rate Theorem implies that, if $F$ is a nongraphic binary matroid, \[h_{\mathcal{EX}(F)}(r)\geq\binom{r+1}{2}.\] Kung et. al. ~\cite{kmpr14} pose the following question: For which nongraphic binary matroids $F$ of rank 4 does equality hold above for all but finitely many $r$? Geelen and Nelson answer this question in ~\cite{gn15}. Let $N_{12}$ be the matroid formed by deleting a three-element independent set from $PG(3,2)$. The nongraphic binary matroids $F$ of rank 4 for which $h_{\mathcal{EX}(F)}(r)\approx\binom{r+1}{2}$ are exactly the nongraphic restrictions of $N_{12}$. We present here an alternate proof. Both proofs allow us to answer the question when $F$ is a matroid of any rank, not just rank 4. We will prove the following theorem after proving several lemmas.

\begin{theorem}
\label{quadgrowth}
 Let $\mathcal{M}$ be a minor-closed class of binary matroids. Then $h_{\mathcal{M}}(r)\approx\binom{r+1}{2}$ if and only if $\mathcal{M}$ contains all graphic matroids but does not contain $\mathcal{M}_v(\Phi_{Y_1})$.
\end{theorem}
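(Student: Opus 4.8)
The plan is to prove the two implications separately: the forward implication via the Growth Rate Theorem together with a density computation for $\mathcal{M}_v(\Phi_{Y_1})$, and the reverse implication via the template machinery of this section.

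For the forward implication, suppose $h_{\mathcal{M}}(r)\approx\binom{r+1}{2}$. Since $h_{\mathcal{M}}(r)=\binom{r+1}{2}$ for all large $r$ is a finite, superlinear, subexponential quantity, outcomes (1), (3) and (4) of the Growth Rate Theorem (Theorem~\ref{growthrate}) are impossible, so outcome (2) holds and $\mathcal{M}$ contains all graphic matroids. To see that $\mathcal{M}_v(\Phi_{Y_1})\not\subseteq\mathcal{M}$, I would exhibit, for each $r\geq3$, a simple rank-$r$ matroid in $\mathcal{M}_v(\Phi_{Y_1})$ with strictly more than $\binom{r+1}{2}$ elements. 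Take a matrix $A'$ virtually respecting $\Phi_{Y_1}$ whose $\Gamma$-frame block (on the $r-2$ rows outside $X$) is the incidence matrix of a graph on $r-2$ vertices together with a half-edge at each vertex --- so it contributes $\binom{r-2}{2}+(r-2)$ columns and has rank $r-2$ --- and take $Z$ large enough that the associated conforming matrix $A$ realizes every column whose restriction to $X$ is one of the three columns of $A_1$ and whose restriction to the frame rows is a unit or zero vector. Then $M(A)\backslash Y_1$ is simple of rank $r$ with $\binom{r-2}{2}+(r-2)+3(r-1)=\binom{r+1}{2}+(r-2)$ elements; alternatively, one may quote Lemma~\ref{Y1minors} and the computation of $h_{\mathcal{M}_v(\Phi_{Y_1})}$. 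Hence if $\mathcal{M}_v(\Phi_{Y_1})\subseteq\mathcal{M}$ then $h_{\mathcal{M}}(r)\geq\binom{r+1}{2}+(r-2)>\binom{r+1}{2}$ for all $r\geq3$, contradicting $h_{\mathcal{M}}(r)\approx\binom{r+1}{2}$.

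For the reverse implication, assume $\mathcal{M}$ contains all graphic matroids and $\mathcal{M}_v(\Phi_{Y_1})\not\subseteq\mathcal{M}$. The inequality $h_{\mathcal{M}}(r)\geq\binom{r+1}{2}$ is immediate, so the task is the matching upper bound for large $r$. Since $\mathcal{M}_v(\Phi_{Y_1})\not\subseteq\mathcal{M}$, the class $\mathcal{M}$ is a proper minor-closed class of binary matroids, and as every binary matroid of rank $m$ is a restriction of $PG(m-1,2)$ there is an $m$ with $PG(m-1,2)\notin\mathcal{M}$. Applying Corollary~\ref{weakframe} with this $m$, together with Theorem~\ref{minimal} and the remarks following Definition~\ref{describes}, we obtain $k,l\in\mathbb{Z}_+$ and a set $\{\Phi_1,\dots,\Phi_s,\Psi_1,\dots,\Psi_t\}$ of templates describing $\mathcal{M}$; since $PG(m-1,2)\notin\mathcal{M}$, the projective-geometry hypothesis of Definition~\ref{describes} is automatically satisfied by every member of $\mathcal{M}$, so every simple vertically $k$-connected member of $\mathcal{M}$ with at least $l$ elements lies in some $\mathcal{M}_v(\Phi_i)$ or has its dual in some $\mathcal{M}_v(\Psi_j)$. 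Now $\mathcal{M}_v(\Phi_{Y_1})\subseteq\mathcal{M}_w(\Phi_{Y_1})$, so $\mathcal{M}_w(\Phi_{Y_1})\not\subseteq\mathcal{M}$, and by Lemma~\ref{YCD} and transitivity of $\preceq$ the same holds for $\mathcal{M}_w(\Phi_X)$, $\mathcal{M}_w(\Phi_C)$ and $\mathcal{M}_w(\Phi_{CX})$. As $\mathcal{M}$ contains each $\mathcal{M}_w(\Phi_i)$, none of $\Phi_X,\Phi_C,\Phi_{CX},\Phi_{Y_1}$ lies below any $\Phi_i$, so by the last bullet of Definition~\ref{describes} each $\Phi_i$ is either trivial or has $\Phi_{Y_0}$ as the only basic template below it.

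It then remains to convert this into a growth-rate bound. When $\Phi_i$ is trivial, $\mathcal{M}_v(\Phi_i)$ contains, up to parallel elements and loops, only graphic matroids, so $h_{\mathcal{M}_v(\Phi_i)}(r)\leq\binom{r+1}{2}$; when $\Phi_i$ has only $\Phi_{Y_0}$ below it, the fine structure of $A_1$ established in the proof of Theorem~\ref{minimal} (together with the corresponding earlier lemma on the growth rate of template classes that do not admit $\Phi_{Y_1}$) shows that $h_{\mathcal{M}_v(\Phi_i)}(r)\leq\binom{r+1}{2}$ for all large $r$, the extremal case being the graft matroids, which have at most $\binom{r}{2}+r=\binom{r+1}{2}$ elements in rank $r$. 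On the dual side, every matroid in $\mathcal{M}_v(\Psi_j)$ is a bounded-rank perturbation of a graphic matroid, so the duals of its members are bounded-rank perturbations of cographic matroids; since cographic matroids have linear growth rate and a bounded-rank perturbation changes the growth rate by at most a multiplicative constant, those duals form a class of linear growth rate. Thus every simple vertically $k$-connected member of $\mathcal{M}$ with at least $l$ elements has at most $\binom{r+1}{2}$ elements once $r$ is large. The step I expect to be the main obstacle is the final connectivity reduction: one must show that, for a minor-closed class containing all graphic matroids whose highly connected members are bounded by $\binom{r+1}{2}$, the entire class is bounded by $\binom{r+1}{2}$ for large $r$, so that decomposing a general member along low-order vertical separations into highly connected pieces introduces neither a surplus above $\binom{r+1}{2}$ nor a lower-order error term. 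This requires a careful decomposition argument along vertical separations, or a citation to the corresponding earlier lemma; granting it, $h_{\mathcal{M}}(r)=\binom{r+1}{2}$ for all large $r$, which together with the lower bound gives $h_{\mathcal{M}}(r)\approx\binom{r+1}{2}$.
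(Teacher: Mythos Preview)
Your forward implication is correct and matches the paper.

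The reverse implication has a genuine gap. You correctly deduce that each $\Phi_i$ in the describing set is either trivial or has $\Phi_{Y_0}$ (and none of $\Phi_X,\Phi_C,\Phi_{CX},\Phi_{Y_1}$) below it. But your next claim---that any such $\Phi_i$ satisfies $h_{\mathcal{M}_v(\Phi_i)}(r)\le\binom{r+1}{2}$ for large $r$---is false, and nothing in Theorem~\ref{minimal} or elsewhere in the paper establishes it. Consider the template $\Phi$ with $C=X=Y_1=\emptyset$, $|Y_0|=2$, $\Lambda$ trivial, and $\Delta=(\mathbb{Z}/2\mathbb{Z})^2$. One checks easily that $\Phi_{Y_0}\preceq\Phi$ while none of $\Phi_X,\Phi_C,\Phi_{CX},\Phi_{Y_1}$ lies below $\Phi$ (no operation enlarges $X$, $C$, or $Y_1$). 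Yet the largest simple rank-$r$ matroid virtually conforming to $\Phi$ is obtained from $M(K_{r+1})$ by appending two further columns, and for large $r$ this has $\binom{r+1}{2}+2$ elements. So ``only $\Phi_{Y_0}$ below'' does not force the graft bound; you have conflated $\mathcal{M}_v(\Phi_{Y_0})$ with $\mathcal{M}_v(\Phi_i)$.

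The paper closes this gap in one line using Lemma~\ref{Y0minorY1}: if $\Phi_{Y_0}\preceq\Phi_i$ then $\mathcal{M}(\Phi_{Y_0})\subseteq\mathcal{M}_w(\Phi_i)\subseteq\mathcal{M}$, and since $\mathcal{M}$ is minor-closed and every matroid in $\mathcal{M}_v(\Phi_{Y_1})$ is a minor of a matroid in $\mathcal{M}(\Phi_{Y_0})$, we get $\mathcal{M}_v(\Phi_{Y_1})\subseteq\mathcal{M}$, contradicting the hypothesis. Hence every $\Phi_i$ is trivial, and the growth-rate computation collapses to the graphic case. The ``connectivity reduction'' you flag as the main obstacle is exactly Lemma~\ref{conformonly} (which rests on Theorem~\ref{gn51} of Geelen--Nelson); you should invoke it directly rather than sketching a separation argument. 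With these two changes---using Lemma~\ref{Y0minorY1} to eliminate the $\Phi_{Y_0}$ case and citing Lemma~\ref{conformonly} for the passage from highly connected matroids to the full growth rate---your argument becomes the paper's.
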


Our proof of Theorem ~\ref{quadgrowth} will depend on the following theorem, proved by Geelen and Nelson in ~\cite{gn15}:
\begin{theorem}
\label{gn51}
Let $\mathcal{M}$ be a quadratically dense minor-closed class of matroids and let $p(x)$ be a real quadratic polynomial with positive leading coefficient. If $h_{\mathcal{M}}(n)>p(n)$ for infinitely many $n\in\mathbb{Z}^+$, then for all integers $r,s\geq1$ there exists a vertically $s$-connected matroid $M\in\mathcal{M}$ satisfying $\varepsilon(M)>p(r(M))$ and $r(M)\geq r$.
\end{theorem}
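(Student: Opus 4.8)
The final statement to be proved is Theorem~\ref{gn51}, which is quoted as a result of Geelen and Nelson from \cite{gn15}. Since the excerpt explicitly attributes this theorem to \cite{gn15} and does not attempt to reprove it, the ``proof'' expected here is a citation together with, at most, a brief indication of why such a statement is plausible. The plan is therefore first to simply cite \cite{gn15} for the statement as given, and then to sketch the mechanism behind it for the reader's benefit.

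The underlying idea is a connectivity-reduction argument, standard in the Geelen--Nelson approach to growth rates. Suppose $h_{\mathcal{M}}(n) > p(n)$ for infinitely many $n$, and fix target values $r, s \geq 1$. One argues by contradiction: if every vertically $s$-connected matroid $M \in \mathcal{M}$ with $r(M) \geq r$ satisfied $\varepsilon(M) \leq p(r(M))$, then one could bound $\varepsilon$ for \emph{all} matroids in $\mathcal{M}$ of large rank. The key tool is a lemma (from \cite{gkw09} and its relatives) stating that if $M$ is not vertically $s$-connected, then $M$ has a partition exhibiting low connectivity, and $\varepsilon(M)$ can be controlled in terms of the $\varepsilon$-values of the pieces plus an error term depending only on $s$; iterating this decomposition down to vertically $s$-connected pieces (or pieces of bounded rank) yields a bound of the form $\varepsilon(M) \leq p(r(M)) + O_s(r(M))$ for all $M \in \mathcal{M}$ with $r(M)$ large. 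Because $\mathcal{M}$ is quadratically dense and $p$ has positive leading coefficient, absorbing the linear error into a slightly larger quadratic polynomial $p'$ would give $h_{\mathcal{M}}(n) \leq p'(n)$ for all large $n$; but one can choose $p$ in the hypothesis to already be the ``extremal'' quadratic, or more carefully track constants, to derive a contradiction with $h_{\mathcal{M}}(n) > p(n)$ infinitely often. This forces the existence of the desired highly-connected witness $M$, and by taking the decomposition far enough one also arranges $r(M) \geq r$.

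The main obstacle in a self-contained proof would be the bookkeeping in the connectivity reduction: one must verify that repeatedly splitting along low-order separations does not lose more than a linear-in-rank amount of ``density,'' which requires the precise form of the density-under-connectivity lemmas of Geelen, Kung, and Whittle, and care that the quadratic leading term is exactly preserved. Since all of this machinery lives in \cite{gn15} and \cite{gkw09} rather than in the present excerpt, the honest and correct move is to invoke Theorem~\ref{gn51} as stated; it is used here only as a black box in the proof of Theorem~\ref{quadgrowth}.
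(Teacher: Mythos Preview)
Your assessment is correct: the paper does not prove Theorem~\ref{gn51} at all but simply quotes it as a result of Geelen and Nelson~\cite{gn15}, to be used as a black box in the proof of Theorem~\ref{quadgrowth}. Your added heuristic sketch of the connectivity-reduction mechanism is extra commentary not present in the paper, but the core ``proof'' you give---cite \cite{gn15}---matches the paper exactly.
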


An \textit{even-cycle matroid} is a binary matroid of the form $M=M\binom{w}{D}$, where $D\in\mathrm{GF}(2)^{V\times E}$ is the vertex-edge incidence matrix of a graph $G=(V,E)$ and $w\in\mathrm{GF}(2)^E$ is the characteristic vector of a set $W\subseteq E$. The pair $(G,W)$ is an \textit{even-cycle representation} of $M$. The edges in $W$ are called \textit{odd} edges, and the other edges are \textit{even} edges. An \textit{odd cycle} of $(G,W)$ is a cycle of $G$ with an odd number of odd edges. A \textit{blocking pair} of $(G,W)$ is a pair of vertices $u,v$ of $G$ so that every odd cycle passes through at least one of these vertices. \textit{Resigning} at a vertex $u$ of $G$ occurs when all the edges incident with $u$ are changed from even to odd and vice-versa. It is easy to see that this corresponds to adding the row of the matrix corresponding to $u$ to the characteristic vector of $W$. Therefore, resigning at a vertex does not change an even-cycle matroid. It is also easy to see that if an even-cycle representation has a blocking pair, then we can resign so that every odd edge is incident with at least one vertex in the blocking pair. For our purposes, it will be convenient to think of a blocking pair in this way.

For $r\geq2$, let $A_r$ be the following binary matrix, where we choose for the $\Gamma$-frame matrix the matrix representation of $M(K_{r-1})$, so that the identity matrices are $(r-2)\times(r-2)$ matrices.
\begin{center}
\begin{tabular}{ |c|ccc|c|c|c|}
\hline
\multirow{2}{*}{0}&1&0&1&$1\cdots1$&$0\cdots0$&$1\cdots1$\\
&0&1&1&$0\cdots0$&$1\cdots1$&$1\cdots1$\\
\hline
$\Gamma$-frame matrix&\multicolumn{3}{c|}{0}&$I$&$I$&$I$\\
\hline
\end{tabular}
\end{center}
Note that $M(A_r)$ is the largest simple matroid of rank $r$ that virtually conforms to $\Phi_{Y_1}$.

\begin{definition}
Let $X_r$ be the largest simple matroid of rank $r$ that virtually conforms to $\Phi_{Y_1}$. Equivalently, $X_1=U_{1,1}$, and for $r\geq2$, we have $X_r=M(A_r)$.
\end{definition}

\begin{lemma}
 \label{Y1minors}
The class $\mathcal{M}_v(\Phi_{Y_1})$ is the class of matroids having an even-cycle representation with a blocking pair. This class is minor-closed.
\end{lemma}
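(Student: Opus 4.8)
The plan is to establish the claimed equality of classes by proving the two inclusions, and then to verify minor-closedness combinatorially on even-cycle representations. Recall that for $\Phi_{Y_1}$ we have $C=Y_0=\emptyset$, $|X|=2$, $|Y_1|=3$, and $A_1=\begin{bmatrix}1&0&1\\0&1&1\end{bmatrix}$; hence $M$ virtually conforms to $\Phi_{Y_1}$ precisely when $M\cong M(A)\backslash Y_1$ for some matrix $A$ built, in the sense of Definition \ref{virtual}, from a matrix virtually respecting $\Phi_{Y_1}$. Since $\Lambda=\Delta=\{0\}$, one checks immediately from the definitions of respecting and conforming that every column of $A$ outside $Y_1$ is either \emph{graphic} — zero on the two rows of $X$, with at most two nonzero entries below — or has its restriction to $X$ equal to one of the three columns $(1,0)^{T},(0,1)^{T},(1,1)^{T}$ of $A_1$ together with a unit or zero column below.

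For the inclusion $\mathcal{M}_v(\Phi_{Y_1})\subseteq\{\text{matroids with an even-cycle representation with a blocking pair}\}$, I would read such an $A\backslash Y_1$ as the matrix obtained from the signed incidence matrix $\binom{w}{D}$ of an even-cycle pair $(G,W)$ by deleting one redundant vertex-row: let $w$ be the first row of $X$, let $s$ be the vertex whose row is the second row of $X$, let $p$ be a vertex whose row has been deleted, and fill in the $p$-entry of each column so that the $D$-part of that column sums to zero (forcing at most two, hence zero or two, ones per column of $D$). Then graphic columns become even edges missing $s$, columns with $X$-part $(0,1)^{T}$ become even edges at $s$, columns with $X$-part $(1,1)^{T}$ become odd edges at $s$, and columns with $X$-part $(1,0)^{T}$ become odd edges incident with $p$ (odd loops being placed at $p$). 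Since the deleted row is redundant — the vertex-rows of $\binom{w}{D}$ sum to zero — we have $M\binom{w}{D}\cong M$, and every odd edge meets $s$ or $p$, so $\{s,p\}$ is a blocking pair. A couple of degenerate cases (for instance $B\setminus X=\emptyset$, where $G$ has only the two vertices $s,p$) must be dispatched separately but are easy.

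For the reverse inclusion, start from an even-cycle representation $(G,W)$ of $M$ with blocking pair $\{s,t\}$ and, using the fact (noted just before the lemma) that resigning at a vertex does not change the matroid, resign so that every odd edge of $(G,W)$ is incident with $s$ or $t$. Deleting the $t$-row from $\binom{w}{D}$ leaves the matroid unchanged, and the resulting matrix has exactly the structure above with $X=\{w,s\}$: an edge disjoint from $\{s,t\}$ must be even (else the blocking pair fails), so it gives a graphic column, while an edge meeting $s$ or $t$ has $(w\text{-entry},s\text{-entry})$ equal to $(0,1)$, $(1,0)$, or $(1,1)$ according as it is even at $s$, odd and missing $s$, or odd at $s$, which are precisely the columns of $A_1$. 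The matrix assembled from the graphic columns, the three columns of $A_1$ (as $Y_1$), and the non-graphic columns reconstructed as $Z$-columns then witnesses $M\in\mathcal{M}_v(\Phi_{Y_1})$.

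Finally, for minor-closedness it suffices to show that deleting or contracting an edge in a representation $(G,W)$ with blocking pair $\{s,t\}$, resigned as above, yields another such representation. Deletion is immediate. Contracting an even non-loop edge $e$ is ordinary graph contraction with $W$ restricted, and every odd cycle of $G/e$ lifts to an odd cycle of $G$ (the edge $e$ contributes even parity), so the image of $\{s,t\}$ still blocks. Contracting an odd edge $e$, which by hypothesis meets $s$ or $t$, reduces to the even case by first resigning at the endpoint of $e$ lying in $\{s,t\}$ — an operation which, as one checks directly, preserves the blocking pair — thereby making $e$ even; contracting a loop leaves a graphic matroid. I expect the main difficulty to lie not in any single step but in the care demanded by the matrix–graph dictionary, in particular the handling of loops and of the low-rank degenerate cases, so that the claimed blocking pair is always genuinely a pair of vertices of $G$; once the dictionary is set up, each half of the argument is short.
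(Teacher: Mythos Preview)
Your proposal is correct and follows essentially the same approach as the paper: both identify the first row of $X$ with the parity vector $w$, the second with one vertex of the blocking pair, and recover the other vertex of the pair by appending the redundant row (the paper does this for the maximal matrix $A_r$, you do it for an arbitrary conforming matrix), and both handle minor-closedness by resigning before contracting an odd edge. One small slip in your reverse inclusion: an even edge incident with $t$ but not $s$ has $(w,s)$-part $(0,0)$, not one of your three listed values, but after deleting the $t$-row such a column is still a $\Gamma$-frame column, so the argument goes through unchanged.
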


\begin{proof}
 Any simple matroid $M$ virtually conforming to $\Phi_{Y_1}$ is a restriction of $X_r$ for some $r$.

Label the rows of $A_r$ as $1,\dots,r$. Add to the matrix row $r+1$, which is the sum of rows $2,\dots, r$. This does not change the matroid $X_r$. We see that $X_r$ is an even-cycle matroid $(G,W)$, where row 1 is the characteristic vector of $W$ and rows $2,\dots, r+1$ form the incidence matrix of $G$. Moreover, every edge in $W$ is incident with the vertex corresponding to either row 2 or row $r+1$. Thus, every matroid virtually conforming to $\Phi_{Y_1}$ has an even-cycle representation with a blocking pair. Conversely, every matroid that has an even-cycle representation with a blocking pair $\{u,v\}$ virtually conforms to $\Phi_{Y_1}$, by making $u$ correspond to the second row and making $v$ correspond to row $r+1$, which can be removed without changing the matroid.

By resigning whenever we wish to contract an element represented by an odd edge, it is not difficult to see that the class of matroids having an even-cycle representation with a blocking pair is minor-closed.
\end{proof}

\begin{lemma}
 \label{restriction}
 Any simple, rank-$r$ matroid $M$ that is a minor of a matroid virtually conforming to $\Phi_{Y_1}$ is a restriction of $X_r$.
\end{lemma}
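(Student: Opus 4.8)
The plan is to reduce first to the case where $M$ itself virtually conforms to $\Phi_{Y_1}$, and then to identify such an $M$ with a restriction of $X_r$. For the first reduction, recall that by Lemma~\ref{Y1minors} the class $\mathcal{M}_v(\Phi_{Y_1})$ is minor-closed; since $M$ is a minor of a member of this class, $M$ itself virtually conforms to $\Phi_{Y_1}$.

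It then remains to show that a simple rank-$r$ matroid virtually conforming to $\Phi_{Y_1}$ is a restriction of $X_r$. Unwinding the definition of virtual conforming exactly as in the proof of Lemma~\ref{YCD}(1), such an $M$ is a restriction of $M(A)$, where $A$ is built from a $\Gamma$-frame matrix $D$ sitting below two ``$X$-rows'': besides the columns of $D$ (with zero $X$-part), the matrix $A$ contains the three columns $[1\,0]^{T}$, $[0\,1]^{T}$, $[1\,1]^{T}$ in the $X$-rows with zero $D$-part, and, for every unit column over the rows of $D$, the three columns obtained by placing each of $[1\,0]^{T}$, $[0\,1]^{T}$, $[1\,1]^{T}$ on top of it. Since $M$ is simple of rank $r$, we may take $A$ to consist of precisely the columns of $M$, so that $r(M(A))=r$ and the pure-$D$ columns form a restriction of $M$ and hence are simple; deleting zero and dependent rows then lets us assume $D$ has at most $r-2$ rows, so that $M(D)$ is a simple graphic matroid of rank at most $r-2$. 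Such a matroid is a restriction of $M(K_{r-1})$, so after relabeling $D$ is a submatrix of the representation of $M(K_{r-1})$ used to define $A_r$. Every column of $A$ is then one of the columns of $A_r$, whence $M$ is a restriction of $M(A_r)=X_r$. The case $r=1$ is immediate since $X_1=U_{1,1}$.

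The delicate point is the rank bookkeeping in the middle step: one must check that $D$ really can be taken with no more than $r-2$ rows. This can fail only if the two $X$-rows are not both independent of the rows of $D$, in which case $M$ effectively uses at most one $X$-row; then $M$ is either graphic --- hence a restriction of $M(K_{r+1})$, which is itself a restriction of $X_r$ --- or it virtually conforms to the template with $|X|=1$, which is handled by the same argument with one fewer row. Alternatively, one may sidestep this entirely by working in the even-cycle model of Lemma~\ref{Y1minors}: write $M$ as an even-cycle matroid $(G,W)$ with a blocking pair and resign so that every odd edge meets the blocking pair $\{u,v\}$; then map $u$ and $v$ to the two blocking-pair vertices of the canonical even-cycle representation of $X_r$ exhibited in the proof of Lemma~\ref{Y1minors}, send the remaining vertices of $G$ into the copy of $K_{r-1}$ contained there, and verify case by case, according to the type (even or odd, link or loop) of each edge of $G$, that it has the required counterpart in the representation of $X_r$; simplicity of $M$ rules out any obstruction coming from parallel edges of differing parity.
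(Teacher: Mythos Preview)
Your proof is correct and follows essentially the same route as the paper. Both arguments first invoke Lemma~\ref{Y1minors} to reduce to the case where $M$ itself lies in $\mathcal{M}_v(\Phi_{Y_1})$, and then show that a representing matrix of $M$ can be trimmed to a submatrix of $A_r$ by discarding redundant rows. The paper's version is terser and phrased in the even-cycle model (your ``alternatively'' paragraph is exactly this), while your matrix-based argument makes the row-deletion step and its edge case more explicit.

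Two small comments. First, your assertion that $M(K_{r+1})$ is a restriction of $X_r$ is true but deserves one line of justification: the columns $e_i$ (for $1\le i\le r$) and $e_i+e_j$ (for $1\le i<j\le r$) all occur among the columns of $A_r$, and together they represent $M(K_{r+1})$. Second, the clause ``or it virtually conforms to the template with $|X|=1$'' is a red herring. If one $X$-row (or the sum of the two $X$-rows) lies in the span of the $D$-rows, then after the corresponding row operation and deletion every remaining column has at most two nonzero entries, so $M$ is already graphic; no separate template needs to be invoked.
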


\begin{proof}
 From the preceding lemma, $M$ is a restriction of some $X_{r'}$. So $M$ has an even-cycle representation $(G,W)$ with a blocking pair $\{u,v\}$. Let $w$ be the characteristic vector of $W$. There are $r'-r$ rows in the matrix $A_{r'}[(V\cup w)-{v}, E(M)]$ whose deletion does not alter the matroid $M$. After these rows are deleted, the resulting matrix is a submatrix of $A_r$.
\end{proof}

\begin{lemma}
\label{Y0minorY1}
Every matroid virtually conforming to $\Phi_{Y_1}$ is a minor of a matroid conforming to $\Phi_{Y_0}$.
\end{lemma}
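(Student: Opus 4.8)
The plan is to peel back the definition of virtual conformity to $\Phi_{Y_1}$ to obtain an explicit matrix for $M$, observe that this matrix is a graph incidence matrix except for a handful of columns with three nonzero entries, and then repair those columns using the single unrestricted column that a matrix conforming to $\Phi_{Y_0}$ provides.

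First I would record the following concrete description. For a row index $x$, write $e_x$ for the binary column that is $1$ in row $x$ and $0$ elsewhere. Unwinding the definition of virtual conformity to $\Phi_{Y_1}$ --- where $C=Y_0=\emptyset$, the groups $\Gamma,\Delta,\Lambda$ are trivial, $A_1$ is as in the definition of $\Phi_{Y_1}$, and $Y_1$ is deleted --- one sees that $M\cong M(A^{\circ})$ for a binary matrix $A^{\circ}$ whose row set is $\{s,t\}\cup\{f_1,\dots,f_n\}$ (here $\{s,t\}=X$ and the $f_i$ index $B-X$) and each of whose columns is either (a) a column with at most two nonzero entries, all of them in rows $f_1,\dots,f_n$, or (b) one of $e_s+\beta$, $e_t+\beta$, $e_s+e_t+\beta$ with $\beta\in\{0\}\cup\{e_{f_1},\dots,e_{f_n}\}$. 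Columns of type (a) come from the free columns of the conforming matrix, and columns of type (b) from the columns indexed by $Z$, each of which is a unit-or-zero column over $\{f_1,\dots,f_n\}$ added to one of the three columns of $A_1$. No simplicity hypothesis is needed here, since $A^{\circ}$ is allowed to have repeated or zero columns.

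The crucial point is that the only columns of $A^{\circ}$ with more than two nonzero entries are those of the form $e_s+e_t+e_{f_i}$. I would introduce a single new row index $g$ and let $D$ be obtained from $A^{\circ}$ by adjoining a zero row indexed by $g$ and then replacing every column $e_s+e_t+e_{f_i}$ by $e_{f_i}+e_g$. Now every column of $D$ has at most two nonzero entries, so $D$ is a $\Gamma$-frame matrix with $\Gamma=\{1\}$. Putting $v:=e_s+e_t+e_g$, the matrix $[D\mid v]$ has the form of a $\Gamma$-frame matrix followed by one arbitrary column, which --- since for $\Phi_{Y_0}$ we have $C=Y_1=\emptyset$, so no contraction or deletion is applied --- is exactly the shape of a matrix conforming to $\Phi_{Y_0}$; hence $M([D\mid v])\in\mathcal{M}(\Phi_{Y_0})$.

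It then remains to check that $M([D\mid v])/v$ equals $M(A^{\circ})\cong M$, which completes the proof since $M$ is thereby a minor of a matroid conforming to $\Phi_{Y_0}$. To contract $v$ we pivot on its nonzero entry in row $g$: this adds $v$ to every column having a $1$ in row $g$ --- precisely the columns $e_{f_i}+e_g$, each of which becomes $e_{f_i}+e_g+(e_s+e_t+e_g)=e_s+e_t+e_{f_i}$ over $\mathrm{GF}(2)$ --- while leaving all other columns fixed; deleting row $g$ and column $v$ then returns the matrix $A^{\circ}$ verbatim. I expect the only delicate part to be this final computation, in particular the observation that one contraction simultaneously repairs all of the ``bad'' columns because they can all be routed through the one new vertex $g$; the rest is routine unwinding of the definitions in Section~\ref{Preliminaries}.
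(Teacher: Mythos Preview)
Your argument is correct. The matrix description you extract from the definition of virtual conformity to $\Phi_{Y_1}$ is accurate, your identification of the ``bad'' columns $e_s+e_t+e_{f_i}$ is right, and the pivot computation showing $M([D\mid v])/v\cong M(A^{\circ})$ is clean and valid. The observation that for $\Phi_{Y_0}$ one has $Y_1=\emptyset$, forcing $Z=\emptyset$, so that conforming matrices are exactly frame matrices with one arbitrary column appended, is also handled correctly.

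The paper's own proof is much terser: it simply cites Lemma~\ref{YCD}(2), which established $\Phi_{Y_1}\preceq\Phi_C$, and then remarks that every matroid conforming to $\Phi_C$ is obtained by contracting one element from a matroid conforming to $\Phi_{Y_0}$. Underneath, however, the construction in the proof of Lemma~\ref{YCD}(2) is exactly yours: one adjoins a new row (your $g$) and a new column (your $v$) so that, after removing the new column, every column has at most two nonzero entries, and contracting the new column restores the original matrix. So you have not taken a genuinely different route; rather, you have reproduced from scratch the matrix-level content of Lemma~\ref{YCD}(2) and fused it with the final contraction step, bypassing the preorder $\preceq$ and Lemma~\ref{minor}. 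The paper's version is shorter because it reuses machinery already in place; yours is more self-contained and makes the mechanism explicit, and it also avoids the mild awkwardness that the proof of Lemma~\ref{YCD}(2) as written only treats the simple case.
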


\begin{proof}
By Lemma ~\ref{YCD}, we have $\Phi_{Y_1}\preceq\Phi_C$. Every matroid conforming to $\Phi_C$ is obtained by contracting an element from a matroid conforming to $\Phi_{Y_0}$.
\end{proof}

\begin{lemma}
\label{graphicvscographic}
 Let $k$ be a positive integer. Then there are at most finitely many integers $r$ such that the complete graphic matroid $M(K_{r+1})$ is a rank-($\leq k$) perturbation of a cographic matroid. 
\end{lemma}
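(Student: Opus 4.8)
The plan is to argue by contradiction, using growth rates. Suppose that for infinitely many $r$ the matroid $M(K_{r+1})$ is a rank-$(\le k)$ perturbation of a cographic matroid $N_r$. By definition there are matrices $A, A' \in \mathrm{GF}(2)^{B\times E}$ with $M(A)\cong M(K_{r+1})$, $M(A')\cong N_r$, and $\mathrm{rank}(A-A')\le k$. Two elementary observations set up the contradiction. First, $A$ and $A'$ share the same column set, so $|E| = \binom{r+1}{2}$. Second, since $A = A' + (A-A')$ and matrix rank is subadditive, $|r(N_r) - r| \le k$, hence $r(N_r) \le r+k$. It therefore suffices to show that $\varepsilon(N_r)$, the size of the simplification of $N_r$, is simultaneously $\Omega(r^2)$ (because $N_r$ is a bounded-rank perturbation of the \emph{simple} matroid $M(K_{r+1})$) and $O(r)$ (because $\mathrm{si}(N_r)$ is cographic of rank at most $r+k$, and cographic matroids have linear growth rate). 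These estimates are incompatible for large $r$.

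For the lower bound, the point is that perturbing a simple matroid by bounded rank cannot create large parallel classes or many loops. Put $T = A - A'$; its column space has dimension at most $k$, hence at most $2^k$ distinct columns. If $P$ is a parallel class of $N_r = M(A')$, or the set of loops of $N_r$, then the columns $A'[\cdot,e]$ for $e\in P$ all equal a common vector $a$, so $A[\cdot,e] = a + T[\cdot,e]$ takes at most $2^k$ distinct values as $e$ runs over $P$. Since $M(A)\cong M(K_{r+1})$ is simple, the map $e\mapsto A[\cdot,e]$ is injective on $E$, so $|P|\le 2^k$. Summing over the at most $\varepsilon(N_r)$ parallel classes of $N_r$ together with its loops gives $\binom{r+1}{2} = |E| \le 2^k(\varepsilon(N_r)+1)$, so $\varepsilon(N_r) \ge 2^{-k}\binom{r+1}{2} - 1$.

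For the upper bound I would apply the Growth Rate Theorem (Theorem~\ref{growthrate}) to the class of cographic matroids, which is minor-closed and nonempty. That class does not contain all graphic matroids (for instance $M(K_5)$ is not cographic, as $K_5$ is nonplanar), nor all $\mathrm{GF}(q)$-representable matroids for any prime power $q$ (the Fano matroid is binary but not cographic, and $U_{2,4}$ witnesses the case $q\ge 3$), nor all simple rank-$2$ matroids ($U_{2,4}$ again); so outcomes (2)--(4) are excluded and outcome (1) holds: there is a constant $c$ with $h_{\mathrm{cographic}}(\rho)\le c\rho$ for all $\rho$. Since $\mathrm{si}(N_r)$ is simple, cographic, and of rank $r(N_r)\le r+k$, we get $\varepsilon(N_r) = |E(\mathrm{si}(N_r))| \le c(r+k)$. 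Combining the two bounds, $2^{-k}\binom{r+1}{2} - 1 \le c(r+k)$, which fails for all sufficiently large $r$; hence only finitely many $r$ satisfy the hypothesis.

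The only step that is not pure bookkeeping is the parallel-class bound in the second paragraph — the observation that a rank-$(\le k)$ perturbation of a simple binary matroid has every parallel class of size at most $2^k$ and at most $2^k$ loops. One could sidestep the Growth Rate Theorem by instead bounding $\varepsilon(M^*(G))$ directly via the number of vertices, bridges, and multiple edges of $G$, but invoking the known linear growth rate of the cographic matroids keeps the argument short.
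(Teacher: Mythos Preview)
Your argument is correct and follows essentially the same route as the paper: bound the rank of $N_r$ within $k$ of $r$, show that a rank-$(\le k)$ perturbation can change $\varepsilon$ by at most a factor of $2^k$, and contrast the linear density of cographic matroids with the quadratic size of $M(K_{r+1})$. The only notable difference is that the paper uses the explicit bound $\varepsilon(N)\le 3r(N)-3$ for cographic $N$ in place of your appeal to the Growth Rate Theorem, which makes the argument slightly more self-contained; you yourself anticipate this alternative in your final paragraph.
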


\begin{proof}
Let $N$ be a cographic matroid. Observe that adding a rank-1 matrix to a matrix representation of a binary matroid $N$ changes $\varepsilon(N)$ by a factor of at most 2. This occurs when, in every rank-1 flat of $N$, there is at least one nonloop element indexing a column that is changed by adding the rank-1 matrix and at least one nonloop element indexing a column that remains unchanged when the rank-1 matrix is added. Thus, if $M$ is a rank-$(\leq t)$ perturbation of $N$, we have $\varepsilon(M)\leq2^t\varepsilon(N)$.

Let $r=r(M)$. Recall that a cographic matroid $N$ has $\varepsilon(N)\leq3r(N)-3$. Therefore, $\varepsilon(M)\leq2^t(3r(N)-3)\leq2^t(3(r+t)-3)$. For fixed $t$ and sufficiently large $r$, this expression is less than $\binom{r+1}{2}=\varepsilon(M(K_{r+1}))$.
\end{proof}

\begin{lemma}
\label{conformonly}
 Let $\mathcal{M}$ be a quadratically dense minor-closed class of matroids representable over a given field $\mathbb{F}$. Let $\{\Phi_1,\dots,\Phi_s,\Psi_1,\dots,\Psi_t\}$ be a set of templates describing $\mathcal{M}$. For sufficiently large $r$, the growth rate $h_{\mathcal{M}}(r)$ is equal to the size of the largest simple matroid of rank $r$ that virtually conforms to any template in $\{\Phi_1,\dots,\Phi_s\}$.
\end{lemma}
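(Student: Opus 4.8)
The plan is to prove both inequalities between $h_{\mathcal M}(r)$ and the asserted right-hand side, which I denote $g(r)$; thus $g(r)$ is the maximum of $\varepsilon(M)$ over simple rank-$r$ matroids $M$ that virtually conform to some $\Phi_i\in\{\Phi_1,\dots,\Phi_s\}$. The inequality $h_{\mathcal M}(r)\ge g(r)$ is immediate: every matroid virtually conforming to $\Phi_i$ lies in $\mathcal M_v(\Phi_i)\subseteq\mathcal M_w(\Phi_i)\subseteq\mathcal M$ by Definition~\ref{describes}, and a simple rank-$r$ matroid has rank at most $r$, so it is counted by $h_{\mathcal M}(r)$. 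The substance is the reverse inequality, which I would prove by contradiction using the Growth Rate Theorem machinery, specifically Theorem~\ref{gn51}.

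Two preliminary facts are needed. First, since $\mathcal M$ is quadratically dense it is not exponentially dense, so by Theorem~\ref{growthrate} there is an $m'$ with $PG(m'-1,2)\notin\mathcal M$; I would arrange that no matroid in $\mathcal M$ of large enough rank and high enough connectivity has a $PG(m-1,2)$-minor, either by assuming $m\ge m'$ in the describing set or by invoking that a sufficiently highly connected binary matroid with a $PG(m-1,2)$-minor and sufficiently many elements has a $PG(m,2)$-minor, so that iterating would contradict quadratic density. Second, for each binary frame template $\Phi$ the function $r\mapsto\max\{\varepsilon(M):M\text{ simple},\ r(M)=r,\ M\text{ virtually conforms to }\Phi\}$ is eventually equal to a fixed quadratic polynomial $p_\Phi$: a largest such $M$ arises, after deleting $Y_1$ and contracting $C$, from a matrix whose $\Gamma$-frame block is a complete-graphic matroid of rank $\Theta(r)$ together with only a bounded amount of further structure controlled by the finite data $(C,X,Y_0,Y_1,A_1,\Delta,\Lambda)$, so $\varepsilon(M)$ is a quadratic in $r$ whose coefficients — and whose eventually-constant lower-order correction — depend only on $\Phi$. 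Hence $g(r)$ agrees for large $r$ with the single quadratic $p_0:=\max_i p_{\Phi_i}$ (distinct polynomials agree only finitely often). Moreover $p_0$ has degree $2$ with positive leading coefficient: the complete graphic matroids $M(K_{n+1})\in\mathcal M$ are simple, vertically $k$-connected for large $n$, large, and have no $PG(m-1,2)$-minor, so by Definition~\ref{describes} each either virtually conforms to some $\Phi_i$ or coconforms to some $\Psi_j$; the latter would make $M(K_{n+1})$ a bounded-rank perturbation of a cographic matroid and hence of size $O(n)$, contradicting $\varepsilon(M(K_{n+1}))=\binom{n+1}{2}$ as in Lemma~\ref{graphicvscographic}. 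So infinitely many $M(K_{n+1})$ virtually conform to a common $\Phi_{i_0}$, forcing $p_{\Phi_{i_0}}$, and therefore $p_0$, to dominate $\binom{r+1}{2}$.

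Now suppose $h_{\mathcal M}(r)>g(r)$ for infinitely many $r$. Since $g(r)=p_0(r)$ for large $r$ and $p_0$ is a real quadratic with positive leading coefficient, we have $h_{\mathcal M}(n)>p_0(n)$ for infinitely many $n$, so Theorem~\ref{gn51} (with its connectivity parameter set to $k$ and its rank parameter taken large) yields a vertically $k$-connected matroid $M'\in\mathcal M$, which we may take simple, with $\varepsilon(M')>p_0(r(M'))$ and $r(M')$ as large as we like. Since $r(M')$ is large, $M'$ has at least $l$ elements and no $PG(m-1,2)$-minor, so by Definition~\ref{describes} either $M'$ virtually conforms to some $\Phi_i$ or $M'$ virtually coconforms to some $\Psi_j$. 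The second case is impossible: it forces $M'$ to be a bounded-rank perturbation of a cographic matroid, hence $\varepsilon(M')=O(r(M'))$, against $\varepsilon(M')>p_0(r(M'))$ for $r(M')$ large (the estimate in the proof of Lemma~\ref{graphicvscographic}). So $M'$ virtually conforms to some $\Phi_i$, whence $\varepsilon(M')\le p_{\Phi_i}(r(M'))\le p_0(r(M'))$ for $r(M')$ large — contradicting $\varepsilon(M')>p_0(r(M'))$. Therefore $h_{\mathcal M}(r)\le g(r)$ for all but finitely many $r$, and with the first inequality this gives the lemma.

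The main obstacle is the second preliminary fact: showing that the maximum size of a simple rank-$r$ matroid virtually conforming to a template $\Phi$ is genuinely a quadratic polynomial for all large $r$ — not merely squeezed between two quadratics — so that a single polynomial $p_0$ can be handed to Theorem~\ref{gn51} and then matched \emph{exactly} against the conforming bound. This needs a careful, though essentially routine, accounting of how the fixed template data affects $\varepsilon$, including verifying that the sub-quadratic correction term stabilises. By contrast, reducing to vertically $k$-connected examples is done cleanly by Theorem~\ref{gn51}, and eliminating the coconforming alternative is immediate from the linear growth rate of perturbations of cographic matroids.
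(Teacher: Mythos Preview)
Your proposal is correct and follows essentially the same approach as the paper: establish that $g(r)$ is eventually a fixed quadratic polynomial, invoke Theorem~\ref{gn51} to produce a highly connected matroid $M'$ with $\varepsilon(M')>g(r(M'))$, and then rule out both the conforming and coconforming alternatives from Definition~\ref{describes} to reach a contradiction. The only notable difference is in the coconforming step: you dispose of it directly by bounding $\varepsilon(M')$ linearly via the perturbation estimate from Lemma~\ref{graphicvscographic}, whereas the paper argues (somewhat less transparently) through complete graphic matroids; your route is cleaner, and you are also more explicit than the paper about why $p_0$ has positive leading coefficient and about the $PG(m-1,2)$ hypothesis.
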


\begin{proof}
Let $h'_{\mathcal{M}}(r)$ denote the size of the largest simple matroid of rank $r$ that virtually conforms to any template in $\{\Phi_1,\dots,\Phi_s\}$. So $h_{\mathcal{M}}(r)\geq h'_{\mathcal{M}}(r)$. The size of the largest simple matroid of rank $r$ that virtually conforms to any particular template is a quadratic polynomial in $r$. Thus, for sufficiently large $r$, the function $h'_{\mathcal{M}}(r)$ is a quadratic polynomial as well.

By Definition ~\ref{describes}, there exist $k,l\in \mathbb{Z}_+$ so that every simple vertically $k$-connected member of $\mathcal{M}$ with at least $l$ elements either weakly conforms to a template in $\{\Phi_1,\dots,\Phi_s\}$ or weakly coconforms to some template in $\{\Psi_1,\dots,\Psi_t\}$. Suppose, for contradiction, that $h_{\mathcal{M}}(r)>h'_{\mathcal{M}}(r)$ for infinitely many $r$. Theorem ~\ref{gn51}, with $h'_{\mathcal{M}}(r)$ playing the role of $p(r)$, implies that there is a sequence $M_1, M_2,\dots$ of vertically $k$-connected matroids in $\mathcal{M}$ such that $\varepsilon(M_i)>h'_{\mathcal{M}}(i)$ and  $r(M_i)\geq i$. Thus, in this sequence, there are infinitely many matroids that are vertically $k$-connected and have size at least $l$. Since these matroids are too large to virtually conform to any template in $\{\Phi_1,\dots,\Phi_t\}$, there is at least one nontrivial template $\Psi\in\{\Psi_1,\dots,\Psi_t\}$ such that infinitely many vertically $k$-connected matroids in $\mathcal{M}$ coconform to $\Psi$. However, since $\mathcal{M}$ contains all graphic matroids and since every complete graphic matroid has infinite vertical connectivity (hence vertical $k$-connectivity), we have that infinitely many complete graphic matroids coconform to $\Psi$. For some $t$ depending on $\Psi$, every matroid coconforming to $\Psi$ is a rank-$(\leq t)$ perturbation of a cographic matroid. This contradicts Lemma ~\ref{graphicvscographic}. By contradiction, the result holds.
\end{proof}

\label{proof} 
\begin{proof}[Proof of Theorem ~\ref{quadgrowth}.]
First, suppose $h_{\mathcal{M}}(r)\approx\binom{r+1}{2}$. By the Growth Rate Theorem, $\mathcal{M}$ contains all graphic matroids. For $r\geq1$, we have $|X_r|=\binom{r-1}{2}+3r-3$, which for $r>2$ is greater than $\binom{r+1}{2}$. Thus, $\mathcal{M}$ does not contain $\mathcal{M}_v(\Phi_{Y_1})$.

Now, suppose $\mathcal{M}$ contains all graphic matroids but does not contain $\mathcal{M}_v(\Phi_{Y_1})$. Since $\mathcal{M}$ contains all graphic matroids, there is a nonempty set $\{\Phi_1,\dots,\Phi_s,\Psi_1,\dots,\Psi_t\}$ of binary frame templates describing $\mathcal{M}$. By Lemma ~\ref{conformonly}, $h_{\mathcal{M}}(r)$ is equal to the size of the largest simple matroid of rank $r$ that conforms to any template in $\{\Phi_1,\dots,\Phi_s\}$. Suppose $\Phi$ is a nontrivial template in $\{\Phi_1,\dots,\Phi_s\}$. By Corollary ~\ref{Y0Y1}, either $\Phi_{Y_0}\preceq\Phi$ or $\Phi_{Y_1}\preceq\Phi$. Since $\mathcal{M}$ does not contain $\mathcal{M}_v(\Phi_{Y_1})$, we must have $\Phi_{Y_0}\preceq\Phi$. However, by Lemma ~\ref{Y0minorY1}, this implies $\mathcal{M}_v(\Phi_{Y_1})\subseteq\mathcal{M}$. Therefore, we conclude that $h_{\mathcal{M}}(r)\approx\binom{r+1}{2}$, completing the proof.
\end{proof}

\begin{corollary}
\label{EXF}
Let $F$ be a simple, binary matroid of rank $r$. Then $h_{\mathcal{EX}(F)}\approx\binom{r+1}{2}$ if and only if $F$ is a nongraphic restriction of $X_r$.
\end{corollary}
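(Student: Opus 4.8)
The plan is to derive this as a direct consequence of Theorem~\ref{quadgrowth}, applied to the minor-closed class $\mathcal{M}=\mathcal{EX}(F)$, by translating each of the two conditions appearing in that theorem into an equivalent condition on $F$ using Lemmas~\ref{Y1minors} and~\ref{restriction}. Since $\mathcal{EX}(F)$ is plainly a minor-closed class of binary matroids, Theorem~\ref{quadgrowth} tells us that $h_{\mathcal{EX}(F)}\approx\binom{r+1}{2}$ if and only if $\mathcal{EX}(F)$ contains all graphic matroids and $\mathcal{EX}(F)$ does not contain $\mathcal{M}_v(\Phi_{Y_1})$.

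First I would dispose of the easy condition: because the class of graphic matroids is minor-closed, $\mathcal{EX}(F)$ contains every graphic matroid if and only if no graphic matroid has an $F$-minor, which holds if and only if $F$ is nongraphic. Next I would handle the condition that $\mathcal{EX}(F)$ does not contain $\mathcal{M}_v(\Phi_{Y_1})$, and show it is equivalent to $F$ being a restriction of $X_r$. For the forward direction, if $\mathcal{M}_v(\Phi_{Y_1})\not\subseteq\mathcal{EX}(F)$ then some matroid $N\in\mathcal{M}_v(\Phi_{Y_1})$ has an $F$-minor; since $\mathcal{M}_v(\Phi_{Y_1})$ is minor-closed by Lemma~\ref{Y1minors}, it follows that $F\in\mathcal{M}_v(\Phi_{Y_1})$, and then Lemma~\ref{restriction}, applied to the simple rank-$r$ matroid $F$ viewed as a minor of itself, gives that $F$ is a restriction of $X_r$. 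For the converse, if $F$ is a restriction of $X_r$, then since $X_r=M(A_r)$ virtually conforms to $\Phi_{Y_1}$ and $\mathcal{M}_v(\Phi_{Y_1})$ is minor-closed, we get $F\in\mathcal{M}_v(\Phi_{Y_1})$; as $F\notin\mathcal{EX}(F)$, this shows $\mathcal{M}_v(\Phi_{Y_1})\not\subseteq\mathcal{EX}(F)$.

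Combining these two equivalences with Theorem~\ref{quadgrowth} then yields that $h_{\mathcal{EX}(F)}\approx\binom{r+1}{2}$ if and only if $F$ is both nongraphic and a restriction of $X_r$, that is, a nongraphic restriction of $X_r$, which is exactly the claim. This argument is mostly bookkeeping; the only step that needs a little care is the use of the minor-closedness of $\mathcal{M}_v(\Phi_{Y_1})$ to upgrade the statement ``some matroid in $\mathcal{M}_v(\Phi_{Y_1})$ has an $F$-minor'' to ``$F$ itself lies in $\mathcal{M}_v(\Phi_{Y_1})$,'' which is what puts us in a position to invoke Lemma~\ref{restriction}.
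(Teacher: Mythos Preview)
Your proposal is correct and follows essentially the same approach as the paper's proof: apply Theorem~\ref{quadgrowth} to $\mathcal{EX}(F)$ and translate the two conditions using Lemma~\ref{restriction}. The paper's proof is terser and does not explicitly invoke Lemma~\ref{Y1minors}; your use of minor-closedness to pass from ``some $N\in\mathcal{M}_v(\Phi_{Y_1})$ has an $F$-minor'' to ``$F\in\mathcal{M}_v(\Phi_{Y_1})$'' is a harmless extra step, since Lemma~\ref{restriction} already applies directly to any simple rank-$r$ minor of a matroid virtually conforming to $\Phi_{Y_1}$.
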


\begin{proof}
By Theorem ~\ref{quadgrowth}, $h_{\mathcal{EX}(F)}\approx\binom{r+1}{2}$ if and only if $\mathcal{EX}(F)$ contains all graphic matroids but does not contain $\mathcal{M}_v(\Phi_{Y_1})$. The condition that $\mathcal{EX}(F)$ contains all graphic matroids is equivalent to the condition that $F$ is nongraphic.  By Lemma ~\ref{restriction}, the condition that $\mathcal{EX}(F)$ does not contain $\mathcal{M}_v(\Phi_{Y_1})$ is equivalent to the condition that $F$ is a restriction of $X_r$.
\end{proof}

Note that $X_4=N_{12}$; so this answers the question posed in ~\cite{kmpr14}.

We now consider the growth rate of $\mathcal{EX}(PG(3,2))$. We will prove Theorem ~\ref{EXPG32}, which we restate below.
\begin{theorem}
 The growth rate function for $\mathcal{EX}(PG(3,2))$ is \[h_{\mathcal{EX}(PG(3,2))}\approx r^2-r+1.\]
\end{theorem}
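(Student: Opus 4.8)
The plan is to establish the two bounds $h_{\mathcal{EX}(PG(3,2))}(r)\ge r^2-r+1$ and $h_{\mathcal{EX}(PG(3,2))}(r)\le r^2-r+1$ for all sufficiently large $r$, using even-cycle matroids for the lower bound and the template description of $\mathcal{EX}(PG(3,2))$ for the upper bound. For the lower bound, recall that the class of even-cycle matroids, $\mathcal{M}(\Phi_X)$, is minor-closed; since the largest simple rank-$4$ even-cycle matroid has $4^2-4+1=13<15$ elements while $PG(3,2)$ is a simple rank-$4$ matroid with $15$ elements, $PG(3,2)$ is not an even-cycle matroid, hence (the class being minor-closed) no even-cycle matroid has a $PG(3,2)$-minor. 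Thus every even-cycle matroid lies in $\mathcal{EX}(PG(3,2))$, and since even-cycle matroids have growth rate $r^2-r+1$ the lower bound follows.

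For the upper bound, put $\mathcal{M}:=\mathcal{EX}(PG(3,2))$; this class is minor-closed and, since it contains all graphic matroids, quadratically dense. Combining Corollary~\ref{weakframe} with Theorem~\ref{minimal} gives a set $\{\Phi_1,\dots,\Phi_s,\Psi_1,\dots,\Psi_t\}$ describing $\mathcal{M}$. By Lemma~\ref{conformonly}, for all sufficiently large $r$ the value $h_{\mathcal{M}}(r)$ equals the size of the largest simple rank-$r$ matroid virtually conforming to one of $\Phi_1,\dots,\Phi_s$, so the cographic-side templates $\Psi_j$ play no role. Since $\mathcal{M}_w(\Phi_i)\subseteq\mathcal{EX}(PG(3,2))$ for each $i$, it suffices to prove the following claim: \emph{if $\Phi$ is a binary frame template and no matroid in $\mathcal{M}_v(\Phi)$ has a $PG(3,2)$-minor, then the largest simple rank-$r$ member of $\mathcal{M}_v(\Phi)$ has at most $r^2-r+1$ elements for all large $r$.}

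To prove the claim I would split along the outcomes of Lemma~\ref{HL}. If $\Phi$ is trivial or has outcome~(iii) of that lemma, then a direct count of the conforming matrices (the $\Gamma$-frame block supplies at most $\binom{n+1}{2}$ elements of rank $n$, the $Z$-columns at most $O(n)$ more, and the $V_0$- and $V_1$-columns only boundedly many) shows the growth rate is at most $\binom{r+1}{2}+O(r)$, which is below $r^2-r+1$ once $r$ is large. In the remaining case $\Phi'\preceq\Phi$ for some $\Phi'\in\{\Phi_X,\Phi_C,\Phi_{CX},\Phi_{Y_0},\Phi_{Y_1}\}$, and I would re-run the reductions from the proofs of Lemmas~\ref{PhiD}--\ref{simpleY1} to bring $\Phi$ to a near-standard form that exposes the signing space $\Lambda$, the perturbation group $\Delta$, and the shape of $A_1$. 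The key point is that any genuinely ``super-even-cycle'' feature — a signing space of dimension at least two restricted to a single frame row, or an element of $Y_0$ whose associated $\Delta$ realizes an unrestricted column — produces a $PG(3,2)$-minor: inside a large even-cycle representation, contracting the added element(s) fills in the missing weight-$3$ columns and recovers all fifteen points of a rank-$4$ minor (the prototype being that a large even-cycle matroid together with one further, essentially arbitrary, column already has a $PG(3,2)$-minor). Since by hypothesis $\Phi$ is $PG(3,2)$-minor-free, this forces $\Phi$ to be equivalent either to a template ``close to graphic'' with $\Lambda$ effectively trivial, of growth rate $\binom{r+1}{2}+O(r)$ (this disposes of the $\Phi_C$, $\Phi_{CX}$, $\Phi_{Y_0}$, and $\Phi_{Y_1}$ cases), or to $\Phi_X$ itself, of growth rate exactly $r^2-r+1$. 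In either case the growth rate is at most $r^2-r+1$ for large $r$, which together with the lower bound gives the theorem.

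The main obstacle is precisely this last step: showing that $\Phi_X$ is maximal, among binary frame templates, with respect to being $PG(3,2)$-minor-free — equivalently, bounding the growth rate of an \emph{arbitrary} $PG(3,2)$-free binary frame template by $r^2-r+1$. This is a finite but delicate case analysis of the ways $\Lambda$, $\Delta$, $Y_0$, and $Y_1$ can exceed what $\Phi_X$ permits, together with an explicit construction, in each case, of a $PG(3,2)$-minor in a sufficiently large even-cycle matroid; the surrounding bookkeeping (the reductions, Lemma~\ref{conformonly}, and the elementary rank-versus-size estimates) is routine by comparison.
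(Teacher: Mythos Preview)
Your outline is correct and follows exactly the paper's route: the lower bound via even-cycle matroids, the reduction to the conforming side via Lemma~\ref{conformonly}, and then a template-by-template growth-rate bound. The ``main obstacle'' you name is precisely what the paper isolates as Lemma~\ref{PG32Phi}, whose conclusion is that every template on the conforming side of a describing set for $\mathcal{EX}(PG(3,2))$ is either (equivalent to) $\Phi_X$ or has $C=\emptyset$ and $\Lambda,\Delta$ trivial --- your two cases ``$\Phi_X$ itself'' and ``close to graphic with growth $\binom{r+1}{2}+O(r)$''.

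One remark on scope: you phrase the key claim for \emph{every} $PG(3,2)$-minor-free template, whereas the paper proves Lemma~\ref{PG32Phi} only for templates in a chosen describing set and, at one point, uses this freedom (a connectivity argument) to force $V_0=\emptyset$ rather than bounding that subcase directly. For the growth-rate conclusion this is harmless --- the $V_0\neq\emptyset$ case already has size $\binom{n+1}{2}+|Y_1|n+|Y_0|+|Y_1|$ --- but be aware that the paper's argument is not literally proving your stated claim. Also, the case analysis you defer is substantial: beyond ruling out $|\Lambda|X_1|\ge 2$ and nontrivial $\Delta$, one must analyse the shape of $A_1$ (the $L_0,L_1$ and then $Q_1,Q_2$ submatrices) and exhibit explicit $PG(3,2)$-minors in four separate column-weight subcases, so ``finite but delicate'' is an honest description.
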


We will use the following.

\begin{lemma}
\label{PG32Phi}
Let $\mathcal{T}_{\mathcal{EX}(PG(3,2))}=\{\Phi_1,\dots\Phi_s,\Psi_1,\dots,\Psi_t\}$. If $\Phi\in\{\Phi_1,\dots\Phi_s\}$, then either $\Phi=\Phi_X$ or $\Phi$ is a template with $C=\emptyset$ and with $\Lambda$ and $\Delta$ trivial.
\end{lemma}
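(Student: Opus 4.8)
The plan is to combine Lemma~\ref{HL} with one concrete computation: once we know that $PG(3,2)$ already appears inside $\mathcal{M}(\Phi_C)$ (indeed inside a graft matroid), the three ``heavy'' minimal templates $\Phi_{Y_0},\Phi_C,\Phi_{CX}$ can all be ruled out as lying below a conforming template, and what remains is forced into the stated shape.

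The key computation I would carry out first is that the graft matroid $N:=M([D\mid \mathbf{1}])$, where $D$ is the vertex--edge incidence matrix of $K_6$ and $\mathbf{1}$ is the all-ones column, conforms to $\Phi_{Y_0}$ and has $PG(3,2)$ as a minor. Since $D$ is a $\Gamma$-frame matrix and $\mathbf 1$ is an arbitrary extra column, $N$ conforms to $\Phi_{Y_0}$, so $N\in\mathcal{M}(\Phi_{Y_0})\subseteq\mathcal{M}_w(\Phi_{Y_0})$. The vector $\mathbf 1$ has even weight, hence lies in the column space of $D$, so $[D\mid\mathbf 1]$ has rank $5$; contracting the column $\mathbf 1$ (pivot on one of its entries, add that row to the other five, delete the pivot row and column) sends each edge $\{1,b\}$ to the weight-$4$ vector supported on $\{2,\dots,6\}\setminus\{b\}$ and leaves each edge $\{a,b\}$ with $a,b\neq 1$ as $e_a+e_b$. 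The $15$ resulting columns are exactly the nonzero vectors of the even-weight subspace of $\mathbb{F}_2^5$, which is a rank-$4$ space, so $N/\mathbf 1\cong PG(3,2)$. Thus $\mathcal{M}_w(\Phi_{Y_0})$ contains a matroid with a $PG(3,2)$-minor.

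Now let $\Phi\in\{\Phi_1,\dots,\Phi_s\}$. By Lemma~\ref{YCD} we have $\Phi_{Y_0}\preceq\Phi_C\preceq\Phi_{CX}$, so $N$ lies in $\mathcal{M}_w(\Phi_{Y_0})\cap\mathcal{M}_w(\Phi_C)\cap\mathcal{M}_w(\Phi_{CX})$. By the first bullet of Definition~\ref{describes}, $\mathcal{M}_w(\Phi)\subseteq\mathcal{EX}(PG(3,2))$, and $\mathcal{EX}(PG(3,2))$ is minor-closed and does not contain $N$; hence $\Phi'\not\preceq\Phi$ for every $\Phi'\in\{\Phi_{Y_0},\Phi_C,\Phi_{CX}\}$. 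Combining this with the fourth bullet of Definition~\ref{describes} (equivalently, with Lemma~\ref{HL} applied to $\Phi$), $\Phi$ is either trivial, or satisfies $\Phi_X\preceq\Phi$ or $\Phi_{Y_1}\preceq\Phi$, or is equivalent to a template of the form in Lemma~\ref{HL}(iii). In the first case replace $\Phi$ by $\Phi_0$; in the last case replace it by the equivalent Lemma~\ref{HL}(iii) template; both of these have $C=\emptyset$ and $\Lambda,\Delta$ trivial and keep the set describing $\mathcal{EX}(PG(3,2))$. In the remaining case one retraces the reduction steps behind Lemma~\ref{HL}: since $\Phi_C,\Phi_{CX},\Phi_{Y_0}$ are excluded, Lemma~\ref{PhiCD} forces $\Phi$ to be equivalent either to a template with $\Lambda|X_1$ nontrivial that reduces to $\Phi_X$, or to a template with $\Lambda$ trivial and $C=\emptyset$, in which case Lemma~\ref{PhiY0} then forces $\Delta$ trivial as well. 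Either way $\Phi$ is equivalent to $\Phi_X$ or to a template with $C=\emptyset$ and $\Lambda,\Delta$ trivial, which is the claim.

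I expect the genuinely delicate part to be this last step: the $K_6$ computation is a clean finite check, but making the bookkeeping precise — that once $\Phi_C,\Phi_{CX},\Phi_{Y_0}$ are excluded, the only conforming templates (up to the equivalences used throughout the paper) are $\Phi_X$ and templates with $C=\emptyset$ and $\Lambda,\Delta$ trivial — requires re-running the case analyses in the proofs of Lemmas~\ref{PhiCD}, \ref{PhiY0}, and~\ref{HL} under the extra hypothesis $\mathcal{M}_w(\Phi)\subseteq\mathcal{EX}(PG(3,2))$, and checking that the replacements preserve all four conditions of Definition~\ref{describes}.
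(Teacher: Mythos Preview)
Your proposal correctly identifies the $K_6$ graft computation and the exclusion of $\Phi_{Y_0},\Phi_C,\Phi_{CX}$; this part matches the paper. But there is a genuine gap in the final step. When Lemma~\ref{PhiCD} lands you in the case ``$\Lambda|X_1$ nontrivial and $\Phi_X\preceq\Phi$'', you conclude that $\Phi$ is \emph{equivalent} to $\Phi_X$. This does not follow: $\Phi_X\preceq\Phi$ only says $\mathcal{M}_w(\Phi_X)\subseteq\mathcal{M}_w(\Phi)$, and there are plenty of templates strictly above $\Phi_X$ with $\Lambda|X_1$ nontrivial (for instance any template with $|\Lambda|X_1|\geq 4$, or with $\Lambda|X_1\cong\mathbb{Z}/2\mathbb{Z}$ but nontrivial $\Delta$, or with nontrivial $Y_0,Y_1$ interacting with the extra row). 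Re-running Lemmas~\ref{PhiCD}, \ref{PhiY0}, \ref{HL} cannot close this gap, because those lemmas never prove $\Phi\preceq\Phi_X$; they only produce reductions \emph{downward} to $\Phi_X$.

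The paper spends the majority of its proof on exactly this case. It must show, using the hypothesis $\mathcal{M}_w(\Phi)\subseteq\mathcal{EX}(PG(3,2))$, that $|\Lambda|X_1|=2$ (via an explicit $PG(3,2)$ conforming to the rank-$2$ $\Lambda$ template), that every element of $\Delta$ lies in the row space of $A_1$, that the odd/even-type pair analysis forces $\Lambda|X_0$ to be effectively trivial, that $\Delta$ can then be taken trivial (via another explicit $PG(3,2)$ construction), and finally that $A_1$ has a very restricted shape (via four more explicit $PG(3,2)$ constructions, one per case). Only after all of this does one obtain $\Phi\preceq\Phi_X$, hence $\Phi\sim\Phi_X$. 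None of this is ``bookkeeping'' from existing lemmas; it is new case analysis driven by concrete matrices exhibiting $PG(3,2)$, and your proposal does not supply any of it.
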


\begin{proof}
The class of matroids conforming to $\Phi_X$ is exactly the class of even-cycle matroids. This class is minor-closed. The largest simple, even-cycle matroid of rank $r$ has an even-cycle representation obtained from the graph $K_r$ by adding to each even edge an odd edge in parallel as well as adding one odd loop to the graph. Therefore, the class of even-cycle matroids has growth rate $2\binom{r}{2}+1=r^2-r+1$. So the largest simple, even-cycle matroid of rank 4 has size 13. Since $PG(3,2)$ has size $15$, we have $\mathcal{M}(\Phi_X)\subseteq\mathcal{EX}(PG(3,2))$. Therefore, we may assume that $\Phi_X\in\mathcal{T}_{\mathcal{EX}(PG(3,2))}$.

Since $\Phi_0\preceq\Phi_X$, we may assume that $\Phi_0\notin\{\Phi_1,\dots\Phi_s\}$. Let \[\Phi=(\{1\},C,X,Y_0,Y_1,A_1,\Delta,\Lambda)\] be a nontrivial template such that $\Phi\neq\Phi_X$ and $\Phi\in\{\Phi_1,\dots\Phi_s\}$. Consider the graft matroid $M(K_6,V(K_6))$. A straightforward computation shows that, by contracting the nongraphic element, we obtain $PG(3,2)$. Therefore, $\Phi_{Y_0}\npreceq\Phi$. By Lemma ~\ref{YCD}, we also have $\Phi_C\npreceq\Phi$ and $\Phi_{CX}\npreceq\Phi$.

Now, we may assume that $\Phi$ is in standard form. Since $\Phi_C\npreceq\Phi$, by Lemma ~\ref{PhiC} we may assume that $C_1=\emptyset$. Also, by Lemma ~\ref{PhiCD}, since $\Phi_{CX}\npreceq\Phi$ and $\Phi_C\npreceq\Phi$, either $\Lambda|X_1$ is nontrivial and $\Phi_X\preceq\Phi$ or $\Lambda$ is trivial and $C=\emptyset$.

First, suppose that $\Lambda$ is trivial and $C=\emptyset$. Since $\Phi_{Y_0}\npreceq\Phi$, Lemma ~\ref{PhiY0} implies that $\Phi$ is equivalent to a template with $\Delta$ trivial. So we may assume
\[\Phi=(\{1\},\emptyset,X,Y_0,Y_1,A_1,\{0\},\{0\}),\] which is one of the possible conclusions of the lemma.

Thus, we may assume that $\Lambda|X_1$ is nontrivial and $\Phi_X\preceq\Phi$. Suppose $|\Lambda|X_1|>2$. On the template \[\Phi=(\{1\},C_0,Y_0,Y_1,A_1,\Delta,\Lambda),\]
perform operation (3) and then repeatedly perform operations (4) and (10) to obtain the template
\[(\{1\},C_0,X,\emptyset,\emptyset,A_1[X,C_0],\{0\},\Lambda).\]
Then repeatedly perform operation (7) to obtain
\[(\{1\},\emptyset,X_1,\emptyset,\emptyset,[\emptyset],\{0\},\Lambda|X_1).\]

Since $\Lambda|X_1$ has characteristic 2 and size greater than 2, it contains a subgroup $\Lambda'$ isomorphic to $(\mathbb{Z}/2\mathbb{Z})\times(\mathbb{Z}/2\mathbb{Z})$. Perform operation (2) to obtain the template
\[(\{1\},\emptyset,X_1,\emptyset,\emptyset,[\emptyset],\{0\},\Lambda');\]
then repeatedly perform operations (5) and (6) to obtain
\[(\{1\},\emptyset,X',\emptyset,\emptyset,[\emptyset],\{0\},\Lambda''),\]
where $|X'|=2$ and $\Lambda''$ is the additive group generated by 
$\begin{bmatrix}
    1 \\
    0
\end{bmatrix}$ and $\begin{bmatrix}
    0 \\
    1
\end{bmatrix}$. One readily sees that $PG(3,2)$ conforms to this template. Therefore, $|\Lambda|=2$. We may perform row operations so that $\Lambda$ is generated by $[1,0\ldots,0]^T$. Let $\Sigma$ be the element of $X$ such that $\Lambda|\{\Sigma\}$ is nonzero.

Now, suppose there is an element $\bar{x}\in\Delta$ that is not in the row space of $A_1$. Perform operations (2) and (3) on $\Phi$ to obtain
\[(\{1\},C_0,X,Y_0,Y_1,A_1,\{0,\bar{x}\},\{0\}).\]
Now, by a similar argument to the one used in the proof of Lemma ~\ref{PhiY0}, we have $\Phi_{Y_0}\preceq\Phi$. Since we already know this is not the case, we deduce that every element of $\Delta$ is in the row space of $A_1$.

 Let $\bar{x}\in\Delta|C_0$ and $\bar{y}\in\Lambda$ be such that there are an odd number of natural numbers $i$ such that $\bar{x}_i=\bar{y}_i=1$. Then we call the ordered pair $(\bar{x},\bar{y})$ a \textit{pair of odd type}. Otherwise, $(\bar{x},\bar{y})$ is a \textit{pair of even type}. Suppose $(\bar{x},\bar{y})$ is a pair of odd type with $\bar{y}|X_1$ a zero vector. By performing operations (2) and (3) and repeatedly performing operations (4) and (10), we obtain
\[(\{1\},C_0,X,\emptyset,\emptyset,A_1[X,C],\{0,\bar{x}\},\{0,\bar{y}\}),\]
which is equivalent to $\Phi_{CX}$. We already know this is not the case. Therefore, for every pair $(\bar{x},\bar{y})$ of odd type, $\bar{y}|X_1=[1,0,\dots,0]^T$.

Suppose $\bar{x}\in\Delta|C$ and $\bar{y}_1,\bar{y}_2\in\Lambda$ are such that $\bar{y}_1|X_1=\bar{y}_2|X_1=[1,0,\dots,0]^T$, such that $(\bar{x},\bar{y}_1)$ is a pair of odd type, and such that $(\bar{x},\bar{y}_2)$ is a pair of even type. Then $(\bar{y}_1+\bar{y}_2)|X_1$ is a zero vector, and $(\bar{x},\bar{y}_1+\bar{y}_2)$ is a pair of odd type. Therefore, either all pairs $(\bar{x},\bar{y})\in\Delta|C\times\Lambda$ are of even type, in which case $\Phi$ is equivalent to a template with $\Lambda|X_0$ trivial and $C=\emptyset$, or if $(\bar{x},\bar{y})$ is a pair of odd type, then $(\bar{x},\bar{z})$ is of odd type for every $\bar{z}\in\Lambda$ with $\bar{z}|X_1$ nonzero. In this case, consider any matrix virtually conforming to $\Phi$. After contracting $C$, we can restore the $\Gamma$-frame matrix by adding $\Sigma$ to each row where the $\Gamma$-frame matrix has been altered. Therefore, $\Phi$ is equivalent to a template with $\Lambda|X_0$ trivial and $C=\emptyset$.

So we now have that
\[\Phi=(\{1\},\emptyset,X,Y_0,Y_1,A_1,\Delta,\Lambda),\]
with $\Lambda$ generated by $[1,0\ldots,0]^T$ and with every element of $\Delta$ in the row space of $A_1$. We will now show that, in fact, $\Phi$ is equivalent to a template with $\Delta$ trivial. On $\Phi$, perform $y$-shifts to obtain the following template, where $Y'_0=Y_0\cup Y_1$:
\[\Phi'=(\{1\},\emptyset,X,Y'_0,\emptyset,A_1,\Delta,\Lambda).\]
By repeatedly performing operation (5) and then operation (6) on this template, we may assume that $A_1$ has the following form, with the star representing an arbitrary binary matrix and $\bar{v}$ representing an arbitrary row vector:
\[
\left[
\begin{array}{c|c}
0\cdots0&\bar{v}\\
\hline
I_{|X|-1}&*
\end{array}
\right].
\]
Also, since $\Lambda|(D-\{\Sigma\})$ is trivial, we may perform row operations on every matrix conforming to $\Phi'$ to obtain a template
\[\Phi''=(\{1\},\emptyset,X,Y'_0,\emptyset,A_1,\Delta'',\Lambda),\]
so that every element of $\Delta''$ has 0 for its first $|X|-1$ entries. Since every element of $\Delta$ was in the row space of $A_1$, the only possible nonzero element of $\Delta''$ is the row vector with 0 for its first $|X|-1$ entries and whose last $|Y'_0|-|X|+1$ entries form the row vector $\bar{v}$. Note that operations (5) and (6) and the row operations we performed on every matrix conforming to $\Phi'$ each changes a template to an equivalent template. Thus, we may assume that $\bar{v}$ is nonzero and that $\Delta''=\{\bf{0},$$\bar{v}\}$ because otherwise, $\Phi$ is equivalent to a template with $\Delta$ trivial. So, for some $y\in Y'_0$, we have $\bar{v}_y=1$. On the template $\Phi''$, repeatedly perform operation (11) and then operation (10) to obtain the following template:
\[\Phi'''=(\{1\},\emptyset,\{\Sigma\},\{y\},\emptyset,[1],\mathbb{Z}/2\mathbb{Z},\mathbb{Z}/2\mathbb{Z}).\]

The following matrix conforms to $\Phi'''$:
\[
\left[
\begin{array}{ccccccccccccccc|c}
0&0&0&0&0&0&0&0&0&0&1&1&1&1&1&1\\
\hline
1&0&0&0&1&1&1&0&0&0&0&0&0&1&0&1\\
0&1&0&0&1&0&0&1&1&0&0&0&1&0&0&1\\
0&0&1&0&0&1&0&1&0&1&0&1&0&0&0&1\\
0&0&0&1&0&0&1&0&1&1&1&0&0&0&0&1
\end{array}
\right].
\]
By contracting $y$, we obtain $PG(3,2)$. Thus, we have shown that $\Phi$ must be equivalent to a template with $\Delta$ trivial. So we may assume
\[\Phi=(\{1\},\emptyset,X,Y_0,Y_1,A_1,\{0\},\Lambda),\]
with $\Lambda$ generated by $[1,0,\ldots,0]^T$.

Now, let us consider the structure of the matrix $A_1$. By repeated use of operation (5), we may assume that $A_1$ is of the following form, with the top row indexed by $\Sigma$, with $*$ representing an arbitrary row vector, with $Y_0=V_0\cup V_1$, and with each $L_i$ representing an arbitrary binary matrix:
\begin{center}
\begin{tabular}{ |c|c|c|c|c| }
\multicolumn{3}{c}{$Y_1$}&\multicolumn{1}{c}{$V_0$}&\multicolumn{1}{c}{$V_1$}\\
\hline
$0\cdots0$&$0\cdots0$&$1\cdots1$&$0\cdots0$&$*$\\
\hline
$I$&$L_0$&$L_1$&0&$L_2$\\
\hline
0&0&0&$I$&$L_3$\\
\hline
\end{tabular}
\end{center}

Suppose either $L_0$ or $L_1$ has a column with two or more nonzero entries. Let $y$ be the element of $Y_1$ that indexes that column, and let $Y'$ be the union of $\{y\}$ with the subset of $Y_1$ that indexes the columns of the identity submatrix of $A_1[X,Y_1]$. Repeatedly perform operations (4) and (10) on $\Phi$ to obtain
\[(\{1\},\emptyset,X,\emptyset,Y',A_1,\{0\},\Lambda).\]
On this template, repeatedly perform $y$-shifts, operation (11), and operation (6) to obtain 
\[(\{1\},\emptyset,X',\emptyset,Y'',\begin{bmatrix}
0&0&x\\
1& 0 &1\\
0& 1 & 1
\end{bmatrix},\{0\},\Lambda),\]
where $x=i$ if $y$ indexes a column of $L_i$ and where $X'$ and $Y''$ index the set of rows and columns, respectively, of the matrix $\begin{bmatrix}
0&0&x\\
1& 0 &1\\
0& 1 & 1
\end{bmatrix}$.

The following matrix conforms to this template. By contracting the columns printed in bold, we obtain $PG(3,2)$.
\[
\left[
\begin{array}{cccccccccccccc|cccc}
0&0&0&0&0&0&0&0&0&0&1&1&1&\bf{1}&\bf{0}&\bf{0}&x&x\\
0&0&0&0&0&0&0&0&0&0&0&0&0&\bf{0}&\bf{1}&\bf{0}&1&1\\
0&0&0&0&0&0&0&0&0&0&0&0&0&\bf{0}&\bf{0}&\bf{1}&1&1\\
\hline
1&0&0&0&1&1&1&0&0&0&0&0&0&\bf{1}&\bf{0}&\bf{0}&1&0\\
0&1&0&0&1&0&0&1&1&0&0&0&0&\bf{1}&\bf{0}&\bf{0}&0&1\\
0&0&1&0&0&1&0&1&0&1&1&0&1&\bf{0}&\bf{1}&\bf{0}&0&0\\
0&0&0&1&0&0&1&0&1&1&0&1&1&\bf{0}&\bf{0}&\bf{1}&0&0\\
\end{array}
\right].
\]
This shows that $L_0$ and $L_1$ consist entirely of unit and zero columns. Thus, by Lemma ~\ref{simpleY1}, $L_0$ is an empty matrix and $L_1$ consists entirely of distinct unit columns. Therefore, $A_1$ is of the following form:
\begin{center}
\begin{tabular}{ |c|c|c|c|c| }
\multicolumn{3}{c}{$Y_1$}&\multicolumn{1}{c}{$V_0$}&\multicolumn{1}{c}{$V_1$}\\
\hline
$0\cdots0$&$0\cdots0$&$1\cdots1$&$0\cdots0$&$*$\\
\hline
$I$&0&$I$&0&$Q_1$\\
\hline
0&$I$&0&0&$Q_2$\\
\hline
0&0&0&$I$&$Q_3$\\
\hline
\end{tabular}
\end{center}
with each $Q_i$ representing an arbitrary binary matrix.

Let $M$ be any matroid conforming to $\Phi$ with rank and connectivity functions $r$ and $\lambda$, respectively. Let $r'$ be the rank of the submatrix of $A_1$ consisting of $Q_1$, $Q_2$, and the row vector we have denoted with a star. Then $r(Y_0)=|V_0|+r'$ and $r(E(M)-Y_0)=r(M)-|V_0|$. Thus, $\lambda(Y_0)=r'$. So if $k>r'+1$, then $M$ is not vertically $k$-connected unless $Y_0$ or $E(M)-Y_0$ is spanning. If $Y_0$ is spanning in $M$, then the $\Gamma$-frame matrix used to construct $M$ has 0 rows. Thus, $M$ is not simple unless $|E(M)|\leq |Y_0|+|Y_1|+1$, with the 1 coming from the element $[1,0\cdots,0]^T$ of $\Lambda$. Thus, if we set $l>|Y_0|+|Y_1|+1$, then no simple, vertically $k$-connected matroid with at least $l$ elements conforms to $\Phi$ unless $E(M)-V_0$ is spanning in $M$. Therefore, we have $V_0=\emptyset$.

Let $Q$ be the submatrix of $A_1$ consisting of $Q_1$ and $Q_2$. If every column of $Q$ has at most two nonzero entries, then $\Phi\preceq\Phi_X$, and as we deduced above, we may assume $\Phi=\Phi_D$. Therefore, we assume that $Q$ has a column $c$, indexed by the element $y\in Y_0$ with three or more nonzero entries.

Repeatedly perform operation (10) on $\Phi$ to obtain the template
\[\Phi'=(\{1\},\emptyset,X,\{y\},Y_1,A_1[D,Y_1\cup\{y\}],\{0\},\Lambda).\]
 Let $c=\left[\begin{array}{c}
c_1\\
\hline
c_2
\end{array}\right]$, with $c_1$ a column of $Q_1$ and $c_2$ a column of $Q_2$. Consider the following cases:
\begin{enumerate}
\item[Case 1.] The vector $c_1$ has three nonzero entries.
\item[Case 2.] The vector $c_1$ has two nonzero entries, and $c_2$ has one nonzero entry.
\item[Case 3.] The vector $c_1$ has one nonzero entry, and $c_2$ has two nonzero entries.
\item[Case 4.] The vector $c_2$ has three nonzero entries.
\end{enumerate}

In Case $i$, repeatedly perform $y$-shifts and operation (11) to obtain the template
\[\Phi''_i=(\{1\},\emptyset,X',\{y\},Y'_1,A_{1,i},\{0\},\Lambda),\]
where $A_{1,i}$ is the matrix defined below with rows indexed by $X'$ and columns indexed by $Y'_1\cup\{y\}$. In each case, the last column is indexed by $y$, and it turns out that the value of $x$ does not matter.
\[
A_{1,1}=\left[
\begin{array}{ccc|ccc|c}
0&0&0&1&1&1&x\\
\hline
1&0&0&1&0&0&1\\
0&1&0&0&1&0&1\\
0&0&1&0&0&1&1\\
\end{array}
\right]
A_{1,2}=\left[
\begin{array}{ccc|cc|c}
0&0&0&1&1&x\\
\hline
1&0&0&1&0&1\\
0&1&0&0&1&1\\
0&0&1&0&0&1\\
\end{array}
\right]
\]

\[
A_{1,3}=\left[
\begin{array}{ccc|c|c}
0&0&0&1&x\\
\hline
1&0&0&1&1\\
0&1&0&0&1\\
0&0&1&0&1\\
\end{array}
\right]
A_{1,4}=\left[
\begin{array}{ccc|c}
0&0&0&x\\
\hline
1&0&0&1\\
0&1&0&1\\
0&0&1&1\\
\end{array}
\right]
\]

In Case $i$, the matrix below virtually conforms to $\Phi''_i$. By contracting the columns printed in bold, we obtain $PG(3,2)$.

\begin{enumerate}

\item[Case 1:]
\[
\left[
\begin{array}{ccc|cccccccccccc|c}
1&1&0&0&0&0&1&1&1&0&0&0&1&1&1&\textbf{\textit{x}}\\
0&0&0&1&0&0&1&0&0&1&0&0&1&0&0&\bf{1}\\
0&0&0&0&1&0&0&1&0&0&1&0&0&1&0&\bf{1}\\
0&0&0&0&0&1&0&0&1&0&0&1&0&0&1&\bf{1}\\
\hline
1&0&1&1&1&1&1&1&1&0&0&0&0&0&0&\bf{0}\\
\end{array}
\right]
\]

\item[Case 2:]
\[
\left[
\begin{array}{ccccccc|ccccccccc|c}
1&0&0&0&1&1&1&0&0&\bf{0}&1&1&0&0&1&1&\textbf{\textit{x}}\\
0&0&0&0&0&0&0&1&0&\bf{0}&1&0&1&0&1&0&\bf{1}\\
0&0&0&0&0&0&0&0&1&\bf{0}&0&1&0&1&0&1&\bf{1}\\
0&0&0&0&0&0&0&0&0&\bf{1}&0&0&0&0&0&0&\bf{1}\\
\hline
0&1&0&1&1&0&1&1&1&\bf{1}&1&1&0&0&0&0&\bf{0}\\
0&0&1&1&0&1&1&0&0&\bf{0}&0&0&1&1&1&1&\bf{0}\\
\end{array}
\right]
\]

\item[Case 3:]
\[
\left[
\begin{array}{ccccc|cccccccccccc|c}
0&0&0&\bf{1}&\bf{1}&0&0&0&0&0&0&1&0&0&0&0&0&\textbf{\textit{x}}\\
0&0&0&\bf{0}&\bf{0}&1&0&0&1&0&0&1&0&0&1&0&0&\bf{1}\\
0&0&0&\bf{0}&\bf{0}&0&1&0&0&1&0&0&1&0&0&1&0&\bf{1}\\
0&0&0&\bf{0}&\bf{0}&0&0&1&0&0&1&0&0&1&0&0&1&\bf{1}\\
\hline
1&0&1&\bf{1}&\bf{0}&1&1&1&0&0&0&0&0&0&0&0&0&\bf{0}\\
0&1&1&\bf{0}&\bf{1}&0&0&0&1&1&1&1&0&0&0&0&0&\bf{0}\\
0&0&0&\bf{0}&\bf{1}&0&0&0&0&0&0&0&1&1&0&0&0&\bf{0}\\
\end{array}
\right]
\]

\item[Case 4:]
\[
\left[
\begin{array}{ccccc|cccccccccccc|c}
1&0&\bf{1}&1&\bf{1}&0&0&0&0&0&0&0&0&0&0&0&0&\textbf{\textit{x}}\\
0&0&\bf{0}&0&\bf{0}&1&0&0&1&0&0&1&0&0&1&0&0&\bf{1}\\
0&0&\bf{0}&0&\bf{0}&0&1&0&0&1&0&0&1&0&0&1&0&\bf{1}\\
0&0&\bf{0}&0&\bf{0}&0&0&1&0&0&1&0&0&1&0&0&1&\bf{1}\\
\hline
0&0&\bf{1}&0&\bf{0}&1&1&1&0&0&0&0&0&0&0&0&0&\bf{0}\\
0&0&\bf{0}&0&\bf{1}&0&0&0&1&1&1&0&0&0&0&0&0&\bf{0}\\
0&1&\bf{0}&1&\bf{1}&0&0&0&0&0&0&1&1&1&0&0&0&\bf{0}\\
\end{array}
\right]
\]
\end{enumerate}
By contradiction, this completes the proof.
\end{proof}

\label{proof} 
\begin{proof}[Proof of Theorem ~\ref{EXPG32}.]
Let $\mathcal{M}=\mathcal{EX}(PG(3,2))$, and let $\mathcal{T}_{\mathcal{M}}=\linebreak\{\Phi_1,\dots\Phi_s,\Psi_1,\dots,\Psi_t\}$. By Lemma ~\ref{conformonly}, for sufficiently large $r$, we have $h_{\mathcal{M}}(r)$ equal to the size of the largest simple matroid of rank $r$ that virtually conforms to any template in $\Phi\in\{\Phi_1,\dots\Phi_s\}$. If $\Phi\in\{\Phi_1,\dots\Phi_s\}$, then by Lemma ~\ref{PG32Phi} either $\Phi=\Phi_X$ or $\Phi$ is of the form $(\{1\},\emptyset,X,Y_0,Y_1,A_1,\{0\},\{0\})$, for some matrix $A_1$ and some sets $X$, $Y_0$, and $Y_1$. Moreover, by operation (5), we may assume that $A_1$ is of the following form, with $Y_0=V_0\cup V_1$ and with the stars representing arbitrary binary matrices:

\begin{center}
\begin{tabular}{ |c|c|c|c| }
\multicolumn{2}{c}{$Y_1$}&\multicolumn{1}{c}{$V_0$}&\multicolumn{1}{c}{$V_1$}\\
\hline
$I$&$*$&0&$*$\\
\hline
0&0&$I$&$*$\\
\hline
\end{tabular}
.
\end{center}

The largest simple matroid of rank $r$ that virtually conforms to $\Phi$ is obtained by taking for the $\Gamma$-frame matrix a matrix representation of $M(K_{n+1})$, where $n=r-r(M(A_1[X,Y_1]))-|V_0|$. Thus, the largest simple matroid of rank $r$ that virtually conforms to $\Phi$ has size $\binom{n+1}{2}+|Y_1|n+|Y_1|+|Y_0|$. Substituting $r-r(M(A_1[X,Y_1]))-|V_0|$ for $n$, one sees that for sufficiently large $r$, this expression is less than $r^2-r+1$. Since the class of matroids virtually conforming to $\Phi_X$ is the class of even-cycle matroids, which has growth rate $r^2-r+1$, the result holds.
\end{proof}

\section{1-flowing Matroids}
\label{1-flowing Matroids}

In this section, we prove Theorem ~\ref{1flowing}. The 1-flowing property is a generalization of the max-flow min-cut property of graphs. See Seymour ~\cite{s81} or Mayhew ~\cite{m15} for more of the background and motivation concerning 1-flowing matroids. We follow the notation and exposition of ~\cite{m15}.

\begin{definition}
Let $e$ be an element of a matroid $M$. Let $c_x$ be a non-negative integral capacity assigned to each element $x\in E(M)-e$. A flow is a function $f$ that assigns to each circuit $C$ containing $e$ a non-negative real number $f_C$ with the constraint that for each $x\in E-e$, the sum of $f_C$ over all circuits containing both $e$ and $x$ is at most $c_x$. We say that $M$ is \textit{$e$-flowing} if, for every assignment of capacities, there is a flow whose sum over all circuits containing $e$ is equal to \[\min\{\sum_{x\in C^*-e}c_x | C^* \textnormal{ is a cocircuit containing }e \}.\] If $M$ is $e$-flowing for each $e\in E(M)$, then $M$ is \textit{1-flowing}.
\end{definition}

The matroid $T_{11}$ is the even-cycle matroid obtained from $K_5$ by adding a loop and making every edge odd, including the loop. In ~\cite{s81}, Seymour showed the following.
\begin{proposition}
 The The class of 1-flowing matroids is minor-closed. Moreover, $AG(3,2)$, $U_{2,4}$, $T_{11}$, and $T^*_{11}$ are excluded minors for the class of 1-flowing matroids.
\end{proposition}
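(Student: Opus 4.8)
The plan is to prove the two assertions separately: first that the class is minor-closed, then that each of the four listed matroids is minor-minimal among matroids that are not $1$-flowing.

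\textbf{Minor-closedness.} I would start from the elementary dictionary between the $e$-flow problem and minor operations. Fix an element $e$ and an element $x\neq e$. The circuits of $M\backslash x$ through $e$ are exactly the circuits of $M$ through $e$ that avoid $x$, and every cocircuit of $M\backslash x$ through $e$ is contained in a cocircuit of $M$ through $e$; dually (using $(M/x)^{*}=M^{*}\backslash x$) the cocircuits of $M/x$ through $e$ are the cocircuits of $M$ through $e$ avoiding $x$, and every circuit of $M/x$ through $e$ comes from a circuit of $M$ through $e$. Hence the $e$-flow problem on $M\backslash x$ is the $e$-flow problem on $M$ with the capacity $c_x$ set to $0$, and the $e$-flow problem on $M/x$ is the $e$-flow problem on $M$ with $c_x$ set to a large integer (so large that no cocircuit of $M$ through $e$ meeting $x$ is ever a minimum cut, while every circuit through $x$ becomes unconstrained at $x$). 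In either case, if $M$ is $e$-flowing then $M\backslash x$ and $M/x$ are $e$-flowing. Since being $1$-flowing means being $e$-flowing for every $e$, it follows that $M\backslash x$ and $M/x$ are $1$-flowing whenever $M$ is, and minor-closedness follows by induction on the number of element removals.

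\textbf{The four matroids fail to be $1$-flowing.} Here it is cleanest to use linear programming duality. For a fixed $e$, let $\mathcal{C}_e$ denote the clutter of circuits of $M$ through $e$, restricted to $E(M)-e$; then $M$ is $e$-flowing exactly when $\mathcal{C}_e$ has the max-flow min-cut property, i.e.\ for every non-negative integral capacity vector the minimum weight of a set meeting all members of $\mathcal{C}_e$ (which is the minimum cut) equals the maximum integral packing of members of $\mathcal{C}_e$, and the common value also equals the optimum of the fractional covering relaxation. So to show a matroid is not $1$-flowing one exhibits an element $e$ and integral capacities for which the fractional covering optimum is strictly below the minimum cut. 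For $U_{2,4}$, take any $e$ and unit capacities: every cocircuit through $e$ contains two of the remaining three elements, so the minimum cut is $2$, while the three circuits through $e$ restrict to the three $2$-element subsets of $E-e$, whose fractional cover number is $\tfrac{3}{2}$ (assign $\tfrac12$ to each element). For $AG(3,2)$, note that all circuits of $AG(3,2)$ have size $4$ (they are the $14$ affine planes), so there are no short circuits or cocircuits through $e$ to consider; the contraction $AG(3,2)/e$ is simple of rank $3$ with $7$ elements, hence isomorphic to $F_7=PG(2,2)$, and both $\mathcal{C}_e$ and the clutter of cocircuits through $e$ restrict to the clutter of lines of $F_7$. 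With unit capacities the minimum cut is $3$, whereas assigning $\tfrac13$ to each of the seven points is a fractional cover of weight $\tfrac{7}{3}<3$. For $T_{11}$, take $e$ to be the odd loop; then $\mathcal{C}_e$ is the clutter of odd cycles of $K_5$, and a short computation produces a weighting whose fractional covering optimum lies below the integral minimum cut. The certificate for $T_{11}^{*}$ is the analogous one phrased in the dual.

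\textbf{Every proper minor is $1$-flowing.} By minor-closedness, every proper minor of one of the four matroids is a minor of one of its single-element deletions or contractions, so it suffices to show each of those finitely many matroids is $1$-flowing. Two general facts make this tractable. First, a graphic matroid $M(G)$ is $1$-flowing, because for an edge $e=uv$ the circuits of $M(G)$ through $e$ correspond to $u$--$v$ paths of $G-e$ and the cocircuits through $e$ to minimal $u$--$v$ edge cuts, so $e$-flowing for $M(G)$ is precisely the max-flow min-cut theorem for the pair $u,v$. Second, a cographic matroid $M^{*}(G)$ is $1$-flowing, because dually the circuits through $e$ are the $u$--$v$ edge cuts and the cocircuits through $e$ are the cycles through $e$, and in every graph the shortest $u$--$v$ path length equals the maximum number of edge-disjoint $u$--$v$ cuts, with the appropriate weighted and integral refinement. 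For $U_{2,4}$ the single-element minors $U_{2,3}$ and $U_{1,3}$ are graphic. For $AG(3,2)$ the contractions are all isomorphic to $F_7$ and the deletions are $7$-element rank-$4$ binary matroids; $F_7$ is checked to be $1$-flowing directly, using that for any $e$ the three lines of $F_7$ through $e$ partition $E(F_7)-e$, which forces the covering optimum of $\mathcal{C}_e$ to be integral for every weighting, and the deletions are handled similarly. For $T_{11}$ and $T_{11}^{*}$ one enumerates the single-element deletions and contractions up to isomorphism, observes that each is graphic, cographic, or one of a short list of small exceptional binary matroids, and disposes of the exceptional ones by a finite element-by-element verification of the Mengerian property.

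\textbf{The main obstacle.} The minor-closedness argument and the four non-$1$-flowing certificates are short once the linear-programming reformulation is set up. The substantive work is the minimality part for $T_{11}$ and $T_{11}^{*}$: correctly listing all of their single-element deletions and contractions and verifying $1$-flowing for the handful that are neither graphic nor cographic. This is a finite but case-heavy check, and it is where clean proofs that graphic matroids, cographic matroids, and the relevant small exceptional matroids are $1$-flowing carry the load.
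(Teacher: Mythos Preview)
The paper does not prove this proposition at all: it is stated as a result of Seymour and attributed to \cite{s81} with no argument given. So there is no ``paper's own proof'' to compare against; you are supplying a proof where the authors chose to cite one.

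Your overall architecture---LP duality to produce non-$1$-flowing certificates, and verification that single-element minors are $1$-flowing---is the standard route and matches Seymour's original approach. The certificates for $U_{2,4}$ and $AG(3,2)$ are correct. However, the minimality arguments have real gaps. For $AG(3,2)$ you need both $F_7$ and $F_7^{*}$ to be $1$-flowing, and your sentence about the three lines through $e$ partitioning $E(F_7)-e$ ignores the four $4$-circuits of $F_7$ through $e$ and says nothing about $F_7^{*}$; the port of $F_7^{*}$ at $e$ is the clutter $Q_6$, whose Mengerian property is precisely the nontrivial point. For $T_{11}$ and $T_{11}^{*}$ you give neither an explicit weighting nor a list of the single-element minors, and ``a short computation'' and ``disposes of the exceptional ones by a finite element-by-element verification'' are placeholders, not arguments. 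If you intend to actually prove the proposition rather than cite it, these are the places where the work lies; otherwise, citing Seymour as the paper does is the honest option.
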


Seymour ~\cite{s81} conjectured that these are the only excluded minors.
\begin{conjecture}[Seymour's 1-flowing Conjecture]
 The set of excluded minors for the class of 1-flowing matroids consists of $AG(3,2)$, $U_{2,4}$, $T_{11}$, and $T^*_{11}$.
\end{conjecture}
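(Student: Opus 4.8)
The plan is to attack the conjecture by feeding the template machinery of this paper into a Seymour-style induction on $|E(M)|$. Since $U_{2,4}$ is an excluded minor for the class of $1$-flowing matroids, it suffices to show that every \emph{binary} matroid $M$ with no minor in $\{AG(3,2),T_{11},T_{11}^*\}$ is $1$-flowing; equivalently, that every excluded minor for the $1$-flowing property other than $U_{2,4}$ lies in $\{AG(3,2),T_{11},T_{11}^*\}$. Combined with Seymour's proposition that the four listed matroids really are excluded minors, this gives the conjecture.

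The high-connectivity end is where the results of this paper do the work. Applying Corollary~\ref{weakframe} to the minor-closed class $\mathcal{EX}(AG(3,2))\cap\mathcal{EX}(T_{11})\cap\mathcal{EX}(T_{11}^*)$ of binary matroids and then Theorem~\ref{minimal}, one obtains integers $k,l$ and a finite list of templates, each of which is either trivial or dominates one of $\Phi_X,\Phi_C,\Phi_{CX},\Phi_{Y_0},\Phi_{Y_1}$. For each of those five basic templates I would exhibit a matroid weakly conforming (or coconforming) to it that has $AG(3,2)$, $T_{11}$, or $T_{11}^*$ as a minor, and hence is not $1$-flowing by Seymour's proposition; the explicit small representations in Section~\ref{Reducing a Template}, together with the fact that $T_{11}$ is itself an even-cycle matroid, make $\Phi_X$ and $\Phi_{Y_0}$ (and, on the dual side, $T_{11}^*$) the natural places to look. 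Ruling out all nontrivial templates this way, and noting that outcome~(iii) of Theorem~\ref{minimal} contributes nothing at the highly-connected, large end, yields the structural statement that every simple, vertically $k$-connected binary matroid with at least $l$ elements and no minor in $\{AG(3,2),T_{11},T_{11}^*\}$ is graphic or cographic — essentially the content already behind Theorem~\ref{1flowing}, but with the hypothesis weakened from ``$1$-flowing'' to the minor condition.

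It then remains to pass from this highly-connected classification to all binary matroids with no such minor. Graphic matroids are $1$-flowing by the max-flow min-cut theorem, cographic matroids are $1$-flowing by Seymour~\cite{s81}, so the structural statement handles all sufficiently large, sufficiently connected matroids in the class. For the remaining matroids one would run an induction: if $M$ is disconnected or has a $1$- or $2$-separation, its parts are binary, strictly smaller, and inherit the excluded-minor hypothesis, so a composition lemma in the spirit of Seymour~\cite{s81} (closure of the $1$-flowing property under $1$- and $2$-sums) closes that case; a bounded list of small base matroids can be checked by direct computation.

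The main obstacle — and the reason the conjecture remains open — is the middle range. Matroid decomposition theory bottoms out at $3$-connectivity, so one must understand how $3$-sums interact with $T_{11}$, $T_{11}^*$, and $AG(3,2)$, presumably by developing enough structure theory for binary matroids without these minors (in the style of Seymour's regular-matroid decomposition theorem) to force either a usable decomposition or a forbidden minor. Worse, a $3$-connected matroid need not be vertically $k$-connected for the (large, unspecified) $k$ produced by Theorem~\ref{ggwframe}, so there is a genuine gap between the connectivity level controlled by the decomposition argument and the connectivity level controlled by the template theorem, and covering that intermediate range for this class appears to require structural input well beyond what is developed here.
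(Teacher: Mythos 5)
The statement you are trying to prove is an open conjecture: the paper states Seymour's 1-flowing Conjecture only as a conjecture and never claims a proof. What the paper actually proves (Theorem~\ref{1flowing}) is the much weaker highly-connected statement, obtained by noting that $AG(3,2)$ is a minor of matroids conforming to $\Phi_{Y_0}$ and to $\Phi_{Y_1}$ (and, by self-duality, coconforming to them), so that Corollary~\ref{Y0Y1} forces the class $\mathcal{EX}(AG(3,2))$ to be described by the trivial template. Your first two paragraphs essentially reproduce this argument, with the harmless variation of working in $\mathcal{EX}(AG(3,2))\cap\mathcal{EX}(T_{11})\cap\mathcal{EX}(T_{11}^*)$ and checking all five minimal templates rather than just using $AG(3,2)$ and Corollary~\ref{Y0Y1}; that part is fine and is ``essentially the content already behind Theorem~\ref{1flowing},'' as you say yourself.

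The genuine gap is the passage from this to the conjecture, and it is not a technical loose end but the entire open problem. Your induction on $|E(M)|$ via $1$- and $2$-separations, even granting a composition lemma for the $1$-flowing property over $1$- and $2$-sums, only reduces to $3$-connected matroids; these need not be vertically $k$-connected for the unspecified (and in principle enormous) $k$ coming from Theorem~\ref{ggwframe}, nor need they have at least $l$ elements in any way you can exploit, so the claim that the leftover cases form ``a bounded list of small base matroids'' is unfounded --- there are arbitrarily large $3$-connected matroids in the class to which neither the decomposition step nor the template theorem applies. You acknowledge this middle range yourself in the final paragraph, but acknowledging it does not close it: no decomposition theory across $3$-sums for binary matroids without $AG(3,2)$-, $T_{11}$-, or $T_{11}^*$-minors is developed here or in the literature you cite, and without it the proposal is a proof only of the highly-connected theorem the paper already contains, not of the conjecture.
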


Since $U_{2,4}$ is an excluded minor for the class of 1-flowing matroids, all such matroids are binary. Therefore, the results in this paper apply to 1-flowing matroids. We will now prove Theorem ~\ref{1flowing}, which we restate below.
\begin{theorem}
 There exist $k,l\in\mathbb{Z}_+$ such that every simple, vertically $k$-connected, 1-flowing matroid with at least $l$ elements is either graphic or cographic.
\end{theorem}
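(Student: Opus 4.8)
The plan is to feed the class $\mathcal{M}$ of $1$-flowing matroids into the template machinery of this section. First I would record the features of $\mathcal{M}$ that we need: by Seymour's results quoted above, $\mathcal{M}$ is minor-closed, and since $U_{2,4}$ is an excluded minor every matroid in $\mathcal{M}$ is binary; also $\mathcal{M}$ is a proper class and contains all graphic matroids. Moreover $AG(3,2)\cong PG(3,2)\backslash H$ for a hyperplane $H\cong PG(2,2)$, so $AG(3,2)$ is a restriction of $PG(3,2)$; since $AG(3,2)$ is an excluded minor for $\mathcal{M}$, no member of $\mathcal{M}$ has a $PG(3,2)$-minor. Applying Corollary~\ref{weakframe} together with Theorem~\ref{minimal} with $m=4$ then yields $k,l\in\mathbb{Z}_+$ and a set $\mathcal{T}_{\mathcal{M}}=\{\Phi_1,\dots,\Phi_s,\Psi_1,\dots,\Psi_t\}$ of binary frame templates describing $\mathcal{M}$ in the sense of Definition~\ref{describes}.

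The crux is to show that every template in $\mathcal{T}_{\mathcal{M}}$ is trivial. Suppose for contradiction that some $\Phi_i$ is nontrivial. By the last bullet of Definition~\ref{describes} and Lemma~\ref{YCD}, either $\Phi_{Y_0}\preceq\Phi_i$ or $\Phi_{Y_1}\preceq\Phi_i$ (this is essentially the content of Corollary~\ref{Y0Y1}). In the second case $\mathcal{M}_v(\Phi_{Y_1})\subseteq\mathcal{M}_w(\Phi_{Y_1})\subseteq\mathcal{M}_w(\Phi_i)\subseteq\mathcal{M}$; in the first case $\mathcal{M}$ contains all graft matroids $\mathcal{M}(\Phi_{Y_0})$, hence all of their minors since $\mathcal{M}$ is minor-closed, and by Lemma~\ref{Y0minorY1} every matroid virtually conforming to $\Phi_{Y_1}$ is a minor of a graft matroid, so again $\mathcal{M}_v(\Phi_{Y_1})\subseteq\mathcal{M}$. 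Now $X_4=M(A_4)$ virtually conforms to $\Phi_{Y_1}$ and $\mathcal{M}_v(\Phi_{Y_1})$ is minor-closed (Lemma~\ref{Y1minors}), so $\mathcal{M}_v(\Phi_{Y_1})$ contains every restriction of $X_4$. Writing $X_4=N_{12}=PG(3,2)\backslash J$ for a $3$-element independent set $J$, and choosing a hyperplane $H$ of $PG(3,2)$ with $J\subseteq H$, we get that $AG(3,2)\cong PG(3,2)\backslash H$ is a restriction of $PG(3,2)\backslash J\cong X_4$; thus $AG(3,2)\in\mathcal{M}_v(\Phi_{Y_1})\subseteq\mathcal{M}$, contradicting that $AG(3,2)$ is an excluded minor for the $1$-flowing property. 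If instead some $\Psi_j$ is nontrivial, the same argument shows that $\mathcal{M}$ contains the duals of the matroids in $\mathcal{M}_v(\Phi_{Y_1})$; since $AG(3,2)$ is self-dual this again forces $AG(3,2)\in\mathcal{M}$, a contradiction. Hence every template in $\mathcal{T}_{\mathcal{M}}$ is trivial.

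To finish I would unwind Definition~\ref{describes}. Let $M$ be a simple, vertically $k$-connected member of $\mathcal{M}$ with at least $l$ elements. Since $M$ has no $PG(3,2)$-minor, the third bullet of Definition~\ref{describes} gives that $M\in\mathcal{M}_v(\Phi_i)$ for some $i$ or $M^*\in\mathcal{M}_v(\Psi_j)$ for some $j$. As the relevant template is trivial, $\mathcal{M}_v(\Phi_i)\subseteq\mathcal{M}_w(\Phi_i)\subseteq\mathcal{M}_w(\Phi_0)$, and $\mathcal{M}_w(\Phi_0)$ is the class of graphic matroids; likewise $\mathcal{M}_v(\Psi_j)\subseteq\mathcal{M}_w(\Phi_0)$. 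Hence either $M$ is graphic, or $M^*$ is graphic and so $M$ is cographic, which is the desired statement.

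As for the main obstacle: the real difficulty has already been absorbed into Theorem~\ref{minimal} and Corollary~\ref{Y0Y1}, so what remains is an application. The step that still needs care is the witnessing one --- verifying that the specific excluded minor $AG(3,2)$ genuinely lies in $\mathcal{M}_v(\Phi_{Y_1})$, via its realisation as a restriction of $X_4=N_{12}$ and via Lemma~\ref{Y0minorY1} --- together with keeping the hierarchy ``conforms / virtually conforms / weakly conforms'' straight while chasing the inclusions that place $\mathcal{M}_v(\Phi_{Y_1})$ (or its dual class) inside $\mathcal{M}$ in the two cases $\Phi_{Y_0}\preceq\Phi_i$ and $\Phi_{Y_1}\preceq\Phi_i$. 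One could run the argument with $T_{11}$ in place of $AG(3,2)$, but the self-duality of $AG(3,2)$ makes the $\Psi_j$ case free.
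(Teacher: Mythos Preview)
Your proposal is correct and follows essentially the same route as the paper: show that $AG(3,2)$ lies in $\mathcal{M}_v(\Phi_{Y_1})$ (as a restriction of $X_4=N_{12}$), use self-duality to handle the $\Psi_j$ side, conclude via Corollary~\ref{Y0Y1} that every template describing the class of $1$-flowing matroids is trivial, and hence that highly connected members are graphic or cographic. The only cosmetic differences are that the paper verifies $AG(3,2)\leq N_{12}$ by an explicit matrix computation rather than your hyperplane argument, and it checks the $\Phi_{Y_0}$ case directly (exhibiting $AG(3,2)$ as a contraction of a graft matroid) rather than routing through Lemma~\ref{Y0minorY1} as you do.
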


\begin{proof}
 The matroid $AG(3,2)$ conforms to $\Phi_{Y_1}$ since it is a restriction of $N_{12}$. Indeed, consider the matrix representing $N_{12}$ that virtually conforms to $\Phi_{Y_1}$. Add the rows labeled by $X$ in this matrix to one of the other rows. Then we can see the matrix representation $[I_4|J_4-I_4]$ of $AG(3,2)$ as a restriction of $N_{12}$. Also, it is not difficult to see that $AG(3,2)$ can be obtained from a matroid conforming to $\Phi_{Y_0}$ by contracting $Y_0$. Thus, $\mathcal{EX}(AG(3,2))$ contains neither $\mathcal{M}(\Phi_{Y_0})$ nor $\mathcal{M}(\Phi_{Y_1})$. Since $AG(3,2)$ is self-dual, $\mathcal{EX}(AG(3,2))$ does not contain $\mathcal{M}^*(\Phi_{Y_0})$, or $\mathcal{M}^*(\Phi_{Y_1})$ either. Therefore, by Corollary ~\ref{Y0Y1}, $\mathcal{EX}(AG(3,2))$ is described by the trivial template. Thus, since $AG(3,2)$ is an excluded minor for the class of 1-flowing matroids, there exist $k,l\in\mathbb{Z}_+$ such that every simple, vertically $k$-connected, 1-flowing matroid with at least $l$ elements either conforms or coconforms to the trivial template. The result follows.
\end{proof}

\section*{Acknowledgements}
We thank the two anonymous referees for carefully reading the manuscript. In particular, we thank the first referee for giving many suggestions that improved the manuscript, including one that greatly simplified the proof of Lemma ~\ref{graphicvscographic}.

\end{document}